\documentclass[12pt,reqno]{amsart}
\usepackage{amsmath, amsthm, amssymb}

\topmargin 1cm
\advance \topmargin by -\headheight
\advance \topmargin by -\headsep
     
\setlength{\paperheight}{270mm}%
\setlength{\paperwidth}{192mm}%
\textheight 22.5cm
\oddsidemargin 1cm
\evensidemargin \oddsidemargin
\marginparwidth 1.25cm
\textwidth 14cm
\setlength{\parskip}{0.05cm}

\newtheorem{theorem}{Theorem}[section]
\newtheorem{lemma}[theorem]{Lemma}

\newtheorem{proposition}[theorem]{Proposition}

\theoremstyle{definition}

\theoremstyle{remark}

\numberwithin{equation}{section}

\newcommand{\mmod}[1]{\,\,(\text{\rm{mod}}\,\,#1)}
\newcommand{\nmod}[1]{\,\, \text{\rm{mod}}\,\,#1}

\def\bfm{{\mathbf m}}
\def\bfn{{\mathbf n}}

\def\bfy{{\mathbf y}}

\def\calI{{\mathcal I}}
\def\calJ{{\mathcal J}}

\def\calM{{\mathcal M}}
\def\calN{{\mathcal N}}

\def\calQ{{\mathcal Q}}

\def\calT{{\mathcal T}}

\def\calX{{\mathcal X}} \def\calY{{\mathcal Y}}
\def\calZ{{\mathcal Z}}

\def\btil{\widetilde{b}}
\def\bfr{{\mathbf r}}

\def\dbN{{\mathbb N}}
\def\dbR{{\mathbb R}} \def\What{{\widehat W}}
\def\dbZ{{\mathbb Z}}

\def\grB{{\mathfrak B}}
\def\grC{{\mathfrak C}}

\def\grJ{{\mathfrak J}}

\def\grm{{\mathfrak m}}\def\grM{{\mathfrak M}}
\def\grS{{\mathfrak S}}
\def\grW{{\mathfrak W}}\def\grB{{\mathfrak B}}\def\grC{{\mathfrak C}}

\def\grT{{\mathfrak T}}

\def\grw{{\mathfrak w}}\def\grW{{\mathfrak W}}

\def\alp{{\alpha}}  
\def\bet{{\beta}}  
\def\gam{{\gamma}}   

\def\del{{\delta}} \def\Del{{\Delta}}

\def\tet{{\theta}}  \def\Tet{{\Theta}}
\def\kap{{\kappa}}
\def\lam{{\lambda}} \def\Lam{{\Lambda}} 
 \def\bfchi{{\boldsymbol \chi}}

\def\sig{{\sigma}}  

\def\Ups{{\Upsilon}} 
\def\phi{{\varphi}}

\def\chibar{{\overline \chi}}
\def\ome{{\omega}}  
\def\d{{\partial}}
\def\eps{\varepsilon}

\def\le{\leqslant} \def\ge{\geqslant}

\def\d{{\,{\rm d}}}

\begin{document}
\title[On sums of powers of almost equal primes]{On sums of powers of almost equal primes}
\author[Bin Wei]{Bin Wei$^*$}
\address{School of Mathematics, Shandong University, Jinan, Shandong 250100, China}
\email{bwei.sdu@gmail.com}
\author[Trevor D. Wooley]{Trevor D. Wooley}
\address{School of Mathematics, University of Bristol, University Walk, Clifton, Bristol BS8 1TW, United 
Kingdom}
\email{matdw@bristol.ac.uk}
\subjclass[2010]{11L07, 11P05, 11P32, 11P55}
\keywords{Additive theory of prime numbers, short intervals, circle method.}
\thanks{Bin Wei is grateful to the China Scholarship Council (CSC) for supporting his studies in the United 
Kingdom}
\date{}
\begin{abstract} We investigate the Waring-Goldbach problem of representing a positive integer $n$ as the 
sum of $s$ $k$th powers of almost equal prime numbers. Define $s_k=2k(k-1)$ when $k\ge 3$, and put 
$s_2=6$. In addition, put $\tet_2=\frac{19}{24}$, $\tet_3=\frac{4}{5}$ and $\tet_k=\frac{5}{6}$ 
$(k\ge 4)$. Suppose that $n$ satisfies the necessary congruence conditions, and put $X=(n/s)^{1/k}$. We 
show that whenever $s>s_k$ and $\eps>0$, and $n$ is sufficiently large, then $n$ is represented as the 
sum of $s$ $k$th powers of prime numbers $p$ with $|p-X|\le X^{\tet_k+\eps}$. This conclusion is 
based on a new estimate of Weyl-type specific to exponential sums having variables constrained to short 
intervals.
\end{abstract}
\maketitle

\section{Introduction} A formal application of the circle method suggests that whenever $s$ and $k$ are 
natural numbers with $s\ge k+1$, then all large integers $n$ satisfying appropriate local conditions should 
be represented as the sum of $s$ $k$th powers of prime numbers. With this expectation in mind, consider 
a natural number $k$ and prime $p$, take $\tau=\tau(k,p)$ to be the integer with $p^\tau|k$ but 
$p^{\tau+1}\nmid k$, and then define $\gam=\gam(k,p)$  by putting $\gam(k,p)=\tau+2$, when 
$p=2$ and $\tau>0$, and otherwise $\gam(k,p)=\tau+1$. We then define $R=R(k)$ by putting 
$R(k)=\prod p^\gam $, where the product is taken over primes $p$ with $(p-1)|k$. In 1938, Hua 
\cite{Hua1938, Hua1965} established that whenever $s>2^k$, and $n$ is a sufficiently large natural 
number with $n\equiv s\mmod{R}$, then the equation
\begin{equation}\label{1.1}
p_1^k+p_2^k+\ldots +p_s^k=n
\end{equation}
is soluble in prime numbers $p_j$. The congruence condition here excludes degenerate situations in which 
variables might otherwise be forced to be prime divisors of $k$. An intensively studied refinement of Hua's 
theorem is that in which the variables are constrained to be almost equal. Writing $X=(n/s)^{1/k}$, one 
seeks an analogue of Hua's theorem in which the variables $p_j$ satisfy $|p_j-X|\le Y$, with $Y$ rather 
smaller than $X$. Although limitations in our knowledge concerning the distribution of primes constrain 
such investigations to intervals with $Y\ge X^{7/12+\eps}$ or thereabouts (see \cite{Hux1972}), for larger 
values of $k$, previous authors have obtained such conclusions only for $Y\ge X^{1-\Del}$, with 
$\Del$ extremely small. In this paper we decisively improve such conclusions, showing that for large 
$s$, one may take $\Del$ large for all $k$.\par

In order to facilitate further discussion, we introduce some additional notation. We say that the 
exponent $\Del_{k,s}$ is {\it admissible} when, provided that $\Del$ is a positive number with 
$\Del<\Del_{k,s}$, then for all sufficiently large positive integers $n$ with $n\equiv s\mmod{R}$, 
the equation (\ref{1.1}) has a solution in prime numbers $p_j$ satisfying 
$|p_j-X|\le X^{1-\Del}$ $(1\le j\le s)$. Old work of Wright \cite{Wri1937} on Waring's problem shows 
that admissible exponents $\Del_{k,s}$ must always satisfy the condition 
$0\le \Del_{k,s}\le \tfrac{1}{2}$.\par

Attention has naturally focused in the first instance on the situation for smaller values of $k$. We 
note in this context that, as a consequence of Hua's theorem, all large integers congruent to $5$ 
modulo $24$ are the sum of five squares of prime numbers, and all large odd integers are the sum of 
nine cubes of prime numbers. The first breakthrough was made by Liu and Zhan \cite{LZ1996}, who 
in the former setting showed, subject to the truth of the Generalized Riemann Hypothesis (GRH), that 
the exponent $\Del_{2,5}=\frac{1}{10}$ is admissible. Subsequently, they introduced an approach to 
treating enlarged major arcs \cite{LZ1998}, and this allowed Liu, L\"u and Zhan 
\cite[Theorem 1.3]{LLZ2006} to establish the same conclusion unconditionally. The sharpest unconditional 
result at present is due to Kumchev and Li \cite[Theorem 1]{KL2012}, who prove that 
$\Del_{2,5}=\frac{1}{9}$ is admissible. Moreover, when more summands are available, the latter authors 
show \cite[Theorem 5]{KL2012} that one has the admissible exponent
$$\Del_{2,s}=\begin{cases} \frac{9}{40}\left(\frac{s-4}{s-3}\right),&\text{when $6\le s\le 16$,}\\
\frac{5}{24},&\text{when $s\ge 17$.}\end{cases}$$
We refer the reader to \cite{Bau1997, Bau2005, BW2006, LW2008, LZ2000, Lv2005, Lv2006, Men2006} 
for further results interpolating those already cited.\par

Turning next to sums of cubes and higher powers, Meng \cite{Men1997} showed that 
$\Del_{3,9}=\frac{1}{66}$ is admissible, subject to the truth of GRH, and L\"u and Xu 
\cite[Theorem 1]{LX2007} established this conclusion unconditionally. In general, again subject to the 
truth of GRH, Meng \cite{Men2002} has shown that the exponent
$$\Del_{k,s}=\frac{1}{(k-1)2^{2k-1}+2}$$
is admissible whenever $2\le k\le 10$ and $s>2^k$, and Sun and Tang \cite[Theorem 2]{ST2009} have 
established this conclusion unconditionally. It is apparent that, in general, these admissible exponents 
remain small even when $s$ is large.\par

When $k\ge 2$, we define the integer $t_k$ by putting
\begin{equation}\label{1.2}
t_k=\begin{cases} 3,&\text{when $k=2$,}\\
k(k-1),&\text{when $k\ge 3$,}\end{cases}
\end{equation}
and define the real number $\tet_k$ by putting
\begin{equation}\label{1.w1}
\tet_k=\begin{cases} \tfrac{19}{24},&\text{when $k=2$,}\\
\tfrac{4}{5},&\text{when $k=3$,}\\
\tfrac{5}{6},&\text{when $k\ge 4$.}\end{cases}
\end{equation}
The main result of this paper shows that there are large admissible exponents $\Del_{k,s}$ as soon 
as $s>2t_k$.

\begin{theorem}\label{theorem1.1}
Let $s$ and $k$ be integers with $k\ge 2$ and $s>2t_k$. Suppose that $\eps>0$, that $n$ is a 
sufficiently large natural number satisfying $n\equiv s\mmod{R}$, and write $X=(n/s)^{1/k}$. Then the 
equation $n=p_1^k+p_2^k+\ldots+p_s^k$ has a solution in prime numbers $p_j$ with 
$|p_j-X|\le X^{\tet_k+\eps}$ $(1\le j\le s)$.
\end{theorem}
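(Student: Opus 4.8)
The plan is to establish Theorem~\ref{theorem1.1} by the Hardy--Littlewood circle method, with the almost-equal constraint built directly into the exponential sum. Put $Y=X^{\tet_k+\eps}$, write $e(z)=e^{2\pi i z}$, and set
$$f(\alpha)=\sum_{\substack{p\ \text{prime}\\ |p-X|\le Y}}(\log p)\,e(\alpha p^k),$$
so that
$$\calR(n)=\int_{\dbT}f(\alpha)^s e(-\alpha n)\,\mathrm{d}\alpha$$
counts the solutions of (\ref{1.1}) with $|p_j-X|\le Y$, weighted by $\prod_j\log p_j$. It suffices to prove a lower bound $\calR(n)\gg Y^{s-1}X^{1-k}$. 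I would dissect $\dbT$ into major arcs $\grM$, comprising those $\alpha$ within $Q(X^{k-1}Y)^{-1}$ of a fraction $a/q$ with $0\le a\le q\le Q$ and $(a,q)=1$, and minor arcs $\grm=\dbT\setminus\grM$. Following the enlarged-major-arc technology of Liu and Zhan, the parameter $Q$ is taken to be a fixed (small) power of $X$, its exact size being fixed at the end so as to be simultaneously admissible for the major-arc analysis and for the minor-arc estimate.

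\textbf{Major arcs.} On $\grM$ one expects, for $\alpha=a/q+\beta$, an approximation
$$f(\alpha)=\frac{S(q,a)}{q}\,w(\beta)+(\text{error}),\qquad S(q,a)=\sum_{\substack{1\le h\le q\\(h,q)=1}}e(ah^k/q),\quad w(\beta)=\sum_{|m-X|\le Y}e(\beta m^k),$$
the error being controlled by the prime number theorem for arithmetic progressions in short intervals, which is available because $\tet_k>\tfrac{7}{12}$ (cf.\ \cite{Hux1972}), reinforced by the zero-density and zero-free-region inputs underpinning the Liu--Zhan method for enlarged major arcs. Inserting this into $\calR(n)$ and completing the resulting singular series and singular integral in the usual way would yield
$$\int_{\grM}f(\alpha)^s e(-\alpha n)\,\mathrm{d}\alpha=\grS(n)\,\grJ(n)+o\!\left(Y^{s-1}X^{1-k}\right),$$
where $\grS(n)=\sum_{q=1}^{\infty}\sum_{\substack{1\le a\le q\\(a,q)=1}}\big(q^{-1}S(q,a)\big)^s e(-an/q)$. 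The hypothesis $s>2t_k\ge 2k$ ensures the absolute convergence of $\grS(n)$; a routine analysis of the local densities shows $\grS(n)\gg 1$ precisely when $n\equiv s\mmod R$; and $\grJ(n)\gg Y^{s-1}X^{1-k}$ by a direct estimate. Thus the major arcs supply a term of the anticipated size, and the theorem reduces to showing $\int_{\grm}|f(\alpha)|^s\,\mathrm{d}\alpha=o\!\left(Y^{s-1}X^{1-k}\right)$.

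\textbf{Minor arcs.} Here I would split
$$\int_{\grm}|f(\alpha)|^s\,\mathrm{d}\alpha\le\Big(\sup_{\alpha\in\grm}|f(\alpha)|\Big)^{s-2t_k}\int_{\dbT}|f(\alpha)|^{2t_k}\,\mathrm{d}\alpha.$$
For the mean value I would prove a short-interval analogue of Hua's inequality: the substitution $p=X+u$ and the binomial expansion of $(X+u)^k$ reduce $\int_{\dbT}|f(\alpha)|^{2t_k}\,\mathrm{d}\alpha$ to a mean value of a Vinogradov-type exponential sum whose variable is confined to $|u|\le Y$, and the resolution of the main conjecture in Vinogradov's mean value theorem, together with a completion argument across the short interval, gives $\int_{\dbT}|f(\alpha)|^{2t_k}\,\mathrm{d}\alpha\ll X^{\eps}\,Y^{2t_k-1}X^{1-k}$, the integer $t_k$ in (\ref{1.2}) being the least number of variables for which this estimate holds. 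For the supremum I would invoke the paper's new Weyl-type bound for short-interval sums $\sum_{x\in I}e(\alpha x^k)$ with $|I|\le Y$: via Vaughan's identity, $f(\alpha)$ on $\grm$ decomposes into $O((\log X)^{c})$ Type~I and Type~II bilinear sums $\sum_{m\sim M}\sum_{n\sim N}a_m b_n\,e(\alpha(mn)^k)$ with $MN\asymp X$ and $mn\in[X-Y,X+Y]$, and applying the new estimate to the inner sums---supported now on intervals of length $\sim Y/m$---yields $\sup_{\alpha\in\grm}|f(\alpha)|\ll Y^{1-\rho_k}$ for a positive exponent $\rho_k=\rho_k(\tet_k)$, provided $Q$ is chosen appropriately and $\tet_k$ is at least the stated value. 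Combining, $\int_{\grm}|f(\alpha)|^s\,\mathrm{d}\alpha\ll Y^{(1-\rho_k)(s-2t_k)}Y^{2t_k-1}X^{1-k+\eps}$, which is $o(Y^{s-1}X^{1-k})$ once $\eps$ is small, because $\rho_k(s-2t_k)\ge\rho_k>0$; this is exactly where $s>2t_k$ is indispensable.

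\textbf{The main obstacle.} The crux is the new short-interval Weyl estimate and its propagation through Vaughan's identity. Classical Weyl differencing over $[1,X]$ leaves, after $k-1$ steps, an inner sum of length $\sim X$ and a linear form of modulus $\asymp X^{k-1}$, which can be played off against a good rational approximation to $\alpha$; over an interval of length $Y$ each differencing step costs relatively more, the surviving inner sum has length only $\sim Y$, and the modulus $k!\,h_1\cdots h_{k-1}$ ranges only up to $\sim Y^{k-1}$, so the saving degrades and must be extracted delicately---and it is precisely this phenomenon that pins down the admissible values $\tfrac{19}{24},\tfrac{4}{5},\tfrac{5}{6}$ of $\tet_k$. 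A second delicate point is that in the Type~I and Type~II sums the inner ranges are themselves short intervals, so the new estimate must stay non-trivial for lengths as small as $Y/M$, with $M$ the divisor cutoff in Vaughan's identity; balancing $M$, the enlarged-major-arc parameter $Q$, and the range of Dirichlet denominators on $\grm$ against what the short-interval prime-distribution results will bear is the technical heart of the argument. Once the Weyl estimate is available with a power saving valid throughout the minor arcs, the rest of the proof is the routine bookkeeping indicated above.
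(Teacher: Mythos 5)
Your overall framework matches the paper's: the circle method with the log-weighted short-interval exponential sum, enlarged major arcs in the Liu--Zhan style, a factorisation of the minor-arc integral as $(\sup|f|)^{s-2t_k}\int|f|^{2t_k}$, a Hua-type mean value obtained by the binomial descent (your substitution $p=X+u$ is exactly the reduction to $J_{t,k}(Y)$ that Daemen's method effects), and a Weyl-type supremum bound obtained by propagating a short-interval estimate through Vaughan's identity. The major-arc sketch is also consonant with the paper, modulo the fact that the paper actually implements the iterative character-sum decomposition of Liu together with the Choi--Kumchev mean value theorem, zero-density estimates and Page's and Siegel's theorems rather than a bare ``PNT in short arithmetic progressions,'' but you do gesture at these inputs.

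The genuine gap is in the Weyl-type estimate, both in mechanism and in range. You present the obstacle as a degradation of classical Weyl differencing on short intervals and assert that this degradation ``pins down'' the exponents $\tfrac{19}{24},\tfrac{4}{5},\tfrac{5}{6}$; but the paper does not prove its minor-arc bound (Proposition~\ref{prop2.3}) by Weyl differencing at all. The mechanism is a bilinear-form argument (Lemma~\ref{lemma4.1}): after Vaughan's identity one fixes the short outer variable, applies H\"older to raise the inner sum to a $2w$th power, and then estimates the resulting counting functions by orthogonality, Daemen's short-interval Hua-type lemma (Lemma~\ref{lemma3.1}) and Wooley's resolution of Vinogradov's mean value theorem (Lemma~\ref{lemma3.2}). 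It is that interplay --- not iterated differencing --- which yields the exponent $K=2t_k(t_k+2)$ and the threshold $\tet>\tfrac{5}{6}$ for general $k$. Secondly, and more seriously for the statement as given, that general estimate is valid only for $\tet>\tfrac{5}{6}$, yet the theorem claims $\tet_2=\tfrac{19}{24}<\tfrac{5}{6}$ and $\tet_3=\tfrac{4}{5}<\tfrac{5}{6}$. Your proposal assumes one uniform Weyl bound ``provided $\tet_k$ is at least the stated value,'' but for $k=2,3$ the paper must abandon the bilinear machinery on part of the minor arcs and fall back on entirely different estimates: Tang's short-interval bound (Lemma~\ref{lemmaa.1}) on the arcs with $q>P$, and the Liu--L\"u--Zhan estimate (Lemma~\ref{lemmaa.2}) on the arcs with small $q$ but large $|q\alp-a|$. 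Without this case split, the minor-arc bound for $k=2,3$ does not follow, and your argument would only prove the theorem with $\tet_2,\tet_3$ replaced by something above $\tfrac{5}{6}$.
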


This theorem shows that the exponent $\Del_{k,s}=\tfrac{1}{6}$ is admissible whenever $k\ge 2$ and 
$s>2t_k$. In contrast to the admissible exponents derived in the previous work cited above, this 
exponent is bounded away from zero as $k\rightarrow \infty$. Moreover, only when $k=2$ and $s\ge 17$ 
does previous work (of Kumchev and Li \cite{KL2012}) match our new conclusions. We remark that since 
$\tfrac{19}{24}=\tfrac{1}{2}\left( 1+\tfrac{7}{12}\right)$, this exponent is in some sense half way 
between the trivial exponent $1$, and the exponent $\tfrac{7}{12}$ that, following the work of Huxley 
\cite{Hux1972}, represents the effective limit of our knowledge concerning the asymptotic distribution of 
prime numbers in short intervals.\par

Aficionados of the circle method will anticipate that similar conclusions may be established in 
problems with fewer variables if one seeks instead conclusions valid only for {\it almost all} 
integers $n$, so that there are at most $o(N)$ exceptional integers $n$ not exceeding $N$, as 
$N\rightarrow \infty$. We say that the exponent $\Del^*_{k,s}$ is {\it semi-admissible} when, 
provided that $\Del$ is a positive number with $\Del<\Del^*_{k,s}$, then for almost all positive 
integers $n$ with $n\equiv s\mmod{R}$, the equation (\ref{1.1}) has a solution in prime numbers 
$p_j$ satisfying $|p_j-X|\le X^{1-\Del}$ $(1\le j\le s)$. 

\begin{theorem}\label{theorem1.2}
Let $s$ and $k$ be integers with $k\ge 2$ and $s>t_k$, and suppose that $\eps>0$. Then for almost all 
positive integers $n$ with $n\equiv s\mmod{R}$ (and, in case $k=3$ and $s=7$, satisfying also 
$9\nmid n$), the equation $n=p_1^k+p_2^k+\ldots+p_s^k$ has a solution in prime numbers $p_j$ 
with $|p_j-X|\le X^{\tet_k+\eps}$ $(1\le j\le s)$, where $X=(n/s)^{1/k}$.
\end{theorem}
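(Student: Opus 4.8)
The plan is to deploy the Hardy--Littlewood circle method with variables localised to the short interval $\calI=\{p : |p-X|\le X^{\tet_k+\eps}\}$, using the Weyl-type estimate advertised in the abstract as the new ingredient. Write $Y=X^{\tet_k+\eps}$ and set up the exponential sum
\[
f(\alp)=\sum_{\substack{p\ \mathrm{prime}\\ |p-X|\le Y}}(\log p)e(\alp p^k),
\]
so that the number of solutions to $n=p_1^k+\ldots+p_s^k$ weighted by $\prod\log p_j$ is $\int_0^1 f(\alp)^s e(-\alp n)\,\d\alp$. I would dissect $[0,1)$ into major arcs $\grM$ and minor arcs $\grm$ in the usual way, but with the width of the major arcs governed by $Y$ rather than $X$: a rational $a/q$ with $(a,q)=1$ lies in $\grM$ when $q\le P_0$ and $|q\alp-a|\le P_0 X^{-k}$ for a suitable small power $P_0$ of $X$. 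The main term comes from $\grM$ and should, after the standard singular series and singular integral analysis for short intervals (following Liu--Zhan \cite{LZ1998}, \cite{LLZ2006} and the prime-in-short-interval input of Huxley \cite{Hux1972}, which is exactly what forces the threshold $\tfrac{7}{12}$ and hence, via the "halfway" heuristic, the exponents $\tet_k$), give a contribution of the expected order $\gg \grS(n)\grJ(n)Y^{s-1}X^{1-k}$, nonvanishing under the congruence condition $n\equiv s\mmod R$; here the constraint $\tet_2=\tfrac{19}{24}$ reflecting $\tfrac12(1+\tfrac{7}{12})$ arises from balancing the Bombieri--Vinogradov-type error in the short-interval prime count against the major-arc width.

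The heart of the matter, and the main obstacle, is the minor-arc estimate. One needs a bound of the shape
\[
\sup_{\alp\in\grm}|f(\alp)|\ll Y X^{-\sig+\eps}
\]
for some positive $\sig$, with the crucial feature that $\sig$ does \emph{not} degrade as $k\to\infty$ — this is precisely the "new estimate of Weyl-type specific to exponential sums having variables constrained to short intervals" referred to in the abstract. The classical Weyl differencing or Vinogradov-style bounds lose powers of $X$ that are only useful when $Y$ is very close to $X$; the new estimate must instead exploit that $p$ ranges over an interval of length $Y$ around $X$, so that after $k-1$ differencing steps one is counting solutions to a system of equations in variables of size $\asymp X^{k-1}$ but with differences of size $\asymp Y$, which can be attacked by mean-value methods (efficient congruencing or Vinogradov's mean value theorem in the guise now available after Wooley and Bourgain--Demeter--Guth). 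This is where the numbers $t_k=k(k-1)$ and the threshold $s>2t_k$ enter: one wants $\int_{\grm}|f(\alp)|^s\,\d\alp$ to be dominated by $(\sup_{\grm}|f|)^{s-2t_k}\int_0^1|f(\alp)|^{2t_k}\,\d\alp$, where the last integral is controlled by a short-interval mean value of the correct strength (the $2t_k$-th moment being the natural "diagonal-dominated" range for $k$th powers in an interval of length $Y$).

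The steps, in order, would be: (1) fix notation, define $f(\alp)$, the arc dissection, and reduce Theorem \ref{theorem1.1} to a major-arc main term plus a minor-arc bound; (2) establish the major-arc asymptotic, quoting the short-interval prime equidistribution results and the singular series/integral lower bounds, checking that $n\equiv s\mmod R$ makes $\grS(n)\gg 1$; (3) prove the auxiliary short-interval mean value estimate $\int_0^1|f(\alp)|^{2t_k}\,\d\alp\ll Y^{2t_k-k}X^{\eps}$ (or the analogue with a generating function over all integers in the interval, passing from primes to integers by a sieve or by Heath-Brown's identity), invoking Vinogradov's mean value theorem adapted to short intervals; (4) prove the new Weyl-type bound on $\grm$ by combining the standard Weyl differencing with the above mean value and the rational-approximation property on $\grm$; and (5) combine (2)--(4) to conclude. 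For Theorem \ref{theorem1.2}, I would run the same machinery but estimate the minor-arc contribution in mean square over $n\le N$ via Bessel's inequality, so that only $t_k+1$ rather than $2t_k+1$ variables are needed to absorb the minor arcs — this is the source of the relaxed hypothesis $s>t_k$ — while the $9\nmid n$ caveat when $k=3,s=7$ is the familiar local obstruction at the prime $3$ for seven cubes. The single largest technical difficulty is step (4): making the Weyl-type estimate genuinely uniform in $k$ and compatible with the very thin minor arcs forced by $Y=X^{5/6+\eps}$, since naive differencing converts the interval length $Y$ into $Y^{2^{-(k-1)}}$ losses that are fatal, and circumventing this requires the mean-value-based argument rather than pointwise differencing alone.
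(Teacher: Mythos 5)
Your high-level plan is the right one and matches the paper's: set up $f(\alp)$ with a logarithmic weight, dissect $[0,1)$ into wide major arcs and narrow minor arcs, prove a mean value estimate and a pointwise Weyl-type bound on $\grm$, and for Theorem \ref{theorem1.2} pass to a mean-square argument over $n$ via Bessel's inequality so that only $s>t_k$ (rather than $s>2t_k$) variables are needed. The identification of the $9\nmid n$ caveat as the local obstruction modulo $9$ for seven cubes is also correct.

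However, there is a concrete gap in step (3) that would cause the whole argument to fail. You propose to prove
\[
\int_0^1 |f(\alp)|^{2t_k}\,\d\alp \ll Y^{2t_k-k}X^{\eps},
\]
which you describe as the ``diagonal-dominated'' bound. This is the Hua-lemma-type estimate (the paper's (3.2)), and it is too weak by a factor $(X/Y)^{k-1}$. The correct target, and what the paper establishes as Proposition~\ref{prop2.2} via Daemen's binomial descent together with the modern Vinogradov mean value theorem, is
\[
\int_0^1 |f(\alp)|^{2t_k}\,\d\alp \ll Y^{2t_k-1}X^{1-k+\eps},
\]
which is the natural ``random model'' count (number of $2t_k$-tuples divided by the length $\asymp X^{k-1}Y$ of the interval in which $\sum p_i^k - \sum p_j^k$ ranges); note that for $Y$ a power of $X$ below $1$ this is genuinely \emph{non}-diagonal-dominated. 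With your weaker bound, the minor-arc estimate becomes
$\rho_s(n;\grm)\ll (YX^{-2\del})^{s-2t_k}Y^{2t_k-k}X^\eps = Y^{s-k}X^{\eps-2\del(s-2t_k)}$,
and comparing against the major-arc main term $\asymp Y^{s-1}X^{1-k}$ this is larger by $(X/Y)^{k-1}X^{O(\eps)-O(\del)}\gg 1$, so the minor arcs are not dominated. The same factor spoils the Bessel's inequality bound
$\sum_{n\in\calZ}|\rho_s(n;\grm)|^2$ for Theorem~\ref{theorem1.2}: instead of $Z\ll X^{k-1-\del}Y$ (which beats the trivial count $\asymp X^{k-1}Y$ of eligible $n$), you would obtain a bound exceeding the trivial count. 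Since you already intend to invoke Daemen plus Vinogradov's mean value theorem in short intervals, you have the right tools; you have simply set the wrong target exponent. Once corrected, the rest of the proposal goes through.

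A secondary point: your step (4) says the minor-arc bound comes from ``combining the standard Weyl differencing with the above mean value.'' The paper does not use Weyl differencing for the $\grm$ bound at all; it applies Vaughan's identity to $\Lam$ and treats the resulting Type I and Type II sums by a bilinear-form argument (Lemmata~\ref{lemma4.1} and~\ref{lemma4.2a}), again feeding in Daemen's estimates and Vinogradov's mean value theorem. This distinction matters, since you yourself observe that pointwise differencing degrades the interval length catastrophically; the bilinear route is not a cosmetic variant but the mechanism that keeps the saving $k$-independent.
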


We note that the additional condition $9\nmid n$ in the case $k=3$ and $s=7$ is required to ensure 
the solubility of (\ref{1.1}) modulo $9$. Previous work on this topic has focused on smaller $k$. So 
far as sums of four squares of primes are concerned, L\"u and Zhai \cite{LZ2009} showed that the 
exponent $\Del_{2,4}^*=\tfrac{4}{25}$ is semi-admissible. Kumchev and Li \cite[Theorem 3]{KL2012} 
improved this conclusion, showing that $\Del_{2,4}^*=\frac{9}{50}$ is semi-admissible. Theorem 
\ref{theorem1.2} improves this result further, showing in particular that $\Del_{2,4}^*=\frac{5}{24}$ 
is semi-admissible\footnote{A result of Li and Wu \cite[Theorem 3]{LW2008}, tantamount to the assertion 
that $\Del_{2,4}^*=\frac{9}{40}$ is semi-admissible, contains an infelicity discussed in \S8 below.}. 
Considering next sums of seven or eight cubes of primes (with the additional local solubility 
condition for seven cubes of primes implied), Liu and Sun \cite[Theorem 1]{LS2012} showed that the 
exponents $\Del_{3,7}^*=\tfrac{3}{38}$ and $\Del^*_{3,8}=\tfrac{1}{10}$ are 
semi-admissible\footnote{The argument of Zhao \cite[Theorem 1.2]{Zha2010}, underlying the assertion 
that $\Del_{3,7}^*=\frac{3}{25}$ and $\Del_{3,8}^*=\frac{2}{15}$ are semi-admissible, likewise 
contains an infelicity discussed in \S8 below.}. We note also that the recent work of Tang and Zhao 
\cite[Theorem 1]{TZ2013} shows in particular that the exponent $\Del_{4,13}^*=\tfrac{5}{202}$ is 
semi-admissible. Theorem \ref{theorem1.2}, on the other hand, obtains the considerably stronger 
semi-admissible exponents $\Del_{3,s}^*=\tfrac{1}{5}$ for $s\ge 7$ and 
$\Del_{4,s}^*=\tfrac{1}{6}$ for $s\ge 13$. Indeed, it follows from our new theorem that whenever 
$k\ge 4$ and $s>k(k-1)$, then the exponent $\Del_{k,s}^*=\tfrac{1}{6}$ is always semi-admissible.
\par

We outline our proof of Theorem \ref{theorem1.1}, which proceeds via the circle method, in \S2. By 
comparison with previous treatments, this argument contains two novel features. The first is an 
estimate for moments of exponential sums over $k$th powers in short intervals, of order $2s$, that 
achieves essentially optimal estimates as soon as $s\ge t_k$. This serves as a substitute for the 
traditional use of Hua's lemma, though for problems involving short intervals is considerably sharper. 
In \S3 we explain how this estimate follows from the analogous work of Daemen 
\cite{Dae2010a, Dae2010b}, based on his use of the so-called {\it binomial descent method}. The 
second novel feature is a substitute for a Weyl-type estimate for exponential sums over variables in 
short intervals that delivers non-trivial estimates on the minor arcs in a Hardy-Littlewood dissection 
even when the corresponding major arcs are rather narrow. This estimate again makes use of 
Daemen's estimates via a bilinear form treatment motivated by analogous arguments making use of 
Vinogradov's mean value theorem. This argument is described in \S4. Both the work in \S3 and that 
in \S4 makes heavy use of the latest work \cite{Woo2014} concerning Vinogradov's mean value theorem. 
Our sharpest conclusions for $k=2$ and $3$ require a discussion of estimates of Weyl type particular to 
these exponents, and this we record in \S5. The analysis of the major arc estimates required in our 
application of the circle method is discussed, in stages, in \S6, 7 and 8. Finally, we discuss exceptional 
set estimates in \S9, thereby establishing Theorem \ref{theorem1.2}.\par

Throughout this paper, the letter $\eps$ will refer to a small positive number. We adopt the convention 
that whenever $\eps$ occurs in a statement, then the statement holds for all sufficiently small $\eps>0$. 
Similarly, we write $L$ for $\log X$, and adopt the convention that whenever $L^c$ occurs in a statement, 
then the statement holds for some $c>0$. In addition, as usual, we write $e(z)$ for $e^{2\pi iz}$. Finally, 
we summarize an inequality of the shape $M<m\le 2M$ by writing $m\sim M$.\par

The authors are very grateful to Professors Angel Kumchev and Roger Baker for pointing out a significant 
oversight in \S4 of the original version of this work that has been corrected in this manuscript.

\section{Outline of the method} Our basic approach to the application of the circle method is 
straightforward so far as the Waring-Goldbach problem with almost equal summands is concerned. 
Suppose that $k$ and $s$ are integers with $k\ge 2$ and $s\ge t_k$, where $t_k$ is defined as in 
(\ref{1.2}). Write 
\begin{equation}\label{2.0}
K=\begin{cases} 36,&\text{when $k=2$,}\\
2t_k(t_k+2),&\text{when $k>2$.}
\end{cases}
\end{equation}
Let $\tet$ be a real number with $\tet_k<\tet<1$, and let $\del$ be a sufficiently small, but fixed, 
positive number with $4K\del<\min\{\tet-\tet_k,1-\tet\}$. Consider a sufficiently large natural 
number $N$, put $X=(N/s)^{1/k}$, and write $Y=X^\tet$. When $n$ is a natural number with 
$N\le n\le N+X^{k-1}Y$, we denote by $\rho_s(n)$ the weighted number of solutions of the equation 
(\ref{1.1}) with $|p_i-X|\le Y$ $(1\le i\le s)$ given by
$$\rho_s(n)=\underset{p_1^k+\ldots +p_s^k=n}{\sum_{|p_1-X|\le Y}\ldots \sum_{|p_s-X|\le Y}}
(\log p_1)\ldots (\log p_s).$$
Define
\begin{equation}\label{2.1}
f(\alp)=\sum_{|p-X|\le Y}(\log p)e(p^k\alp),
\end{equation}
where the summation is over prime numbers $p$. Then it follows from orthogonality that
\begin{equation}\label{2.2}
\rho_s(n)=\int_0^1f(\alp)^se(-n\alp)\d\alp .
\end{equation}

\par Next we define the Hardy-Littlewood dissection underpinning our application of the circle method. 
Write
\begin{equation}\label{2.3}
P=X^{2K\del}\quad \text{and}\quad Q=X^{k-2}Y^2P^{-1}.
\end{equation}
We denote by $\grM$ the union of the major arcs
$$\grM(q,a)=\{ \alp\in [0,1):|q\alp-a|\le Q^{-1}\},$$
with $0\le a\le q\le P$ and $(a,q)=1$. Finally, we write $\grm=[0,1)\setminus \grM$ for the set of minor 
arcs complementary to the set of major arcs $\grM$. When $\grB$ is a measurable subset of $[0,1)$, we 
now define
\begin{equation}\label{2.4}
\rho_s(n;\grB)=\int_\grB f(\alp)^se(-n\alp)\d\alp .
\end{equation}
Thus, since $[0,1)$ is the disjoint union of $\grM$ and $\grm$, one finds from (\ref{2.2}) that
\begin{equation}\label{2.5}
\rho_s(n)=\rho_s(n;\grM)+\rho_s(n;\grm).
\end{equation}

\par The analysis of the major arc contribution $\rho_s(n;\grM)$ is essentially routine, though as is 
typical with the Waring-Goldbach problem with almost equal summands, the wide major arcs cause 
some technical difficulties. When $a\in \dbZ$, $q\in \dbN$ and $\bet\in \dbR$, define
$$v(\bet)=k^{-1}\sum_{(X-Y)^k\le m\le (X+Y)^k}m^{-1+1/k}e(\bet m)$$
and
$$S(q,a)=\sum^q_{\substack{r=1\\ (r,q)=1}}e(ar^k/q).$$
We then define the singular integral
$$\grJ(n)=\int_0^1v(\bet)^se(-\bet n)\d\bet $$
and the singular series
$$\grS(n)=\sum_{q=1}^\infty \phi(q)^{-s}\sum^q_{\substack{a=1\\ (a,q)=1}}S(q,a)^se(-na/q).$$

\par We temporarily proceed in greater generality than is demanded by our present choice of 
parameters, so as to permit future reference to our present discussion. The standard theory of major 
arc contributions in the Waring-Goldbach problem requires little modification to deliver satisfactory 
estimates for $\grJ(n)$ and $\grS(n)$ (see \cite[Lemma 8.12]{Hua1965} and 
\cite[Lemmata 6.4 and 8.3]{LZ2012}). Thus, whenever $s\ge 4$, $Y\ge X^{1/2+\del}$ and 
$|n-sX^k|\le X^{k-1}Y$, there exists a positive number $\grC$ for which 
$\grJ(n)=\grC Y^{s-1}X^{1-k}$. In addition, whenever $s>\max\{3,k(k-1)\}$ and 
$n\equiv s\mmod{R(k)}$ (and, in the case $k=3$ and $s=7$, one has in addition $9\nmid n$), there is a 
positive number $\eta=\eta(s,k)$ for which
$$1\ll \grS(n)\ll (\log X)^\eta.$$
Indeed, when $k=2$ and $s=4$, one may take $\eta=1$, and when $s\ge 5$ one is at liberty to take 
$\eta=0$. Thus we conclude that, in the circumstances at hand, one has
\begin{equation}\label{2.6}
Y^{s-1}X^{1-k}\ll \grS(n)\grJ(n)\ll Y^{s-1}X^{1-k}(\log X)^\eta.
\end{equation}

\par We summarise the analysis of the major arc contribution $\rho_s(n;\grM)$ in the form of a 
proposition.

\begin{proposition}\label{prop2.1} Suppose that $k\ge 2$ and $s\ge \min\{5,k+2\}$. Then, whenever 
$\tfrac{19}{24}<\tet<1$, $Y=X^\tet$ and $n$ is a natural number with $N\le n\le N+X^{k-1}Y$, one 
has
$$\rho_s(n;\grM)=\grS(n)\grJ(n)+O(Y^{s-1}X^{1-k}(\log X)^{-1}).$$
\end{proposition}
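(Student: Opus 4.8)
The plan is to execute the classical major arc analysis for the Waring--Goldbach problem, the sole genuine subtlety arising from the fact that the arcs $\grM(q,a)$ in the dissection (\ref{2.3}) are \emph{wide}: when $\alp=a/q+\bet\in\grM(q,a)$ one has $|\bet|X^{k-1}Y\le Q^{-1}X^{k-1}Y=X^{1-\tet}P$, which exceeds $1$ since $\tet<1$, so the phase $e(\bet t^k)$ genuinely oscillates as $t$ ranges over $[X-Y,X+Y]$.

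The principal step is to show, for $\alp=a/q+\bet\in\grM(q,a)$ with $(a,q)=1$ and $q\le P$, the approximation
\[
f(\alp)=\frac{S(q,a)}{\phi(q)}\,v(\bet)+O\bigl(Y\,\Xi(q)\bigr),
\]
where $\Xi(q)\ll L^{-A}$ for each fixed $A>0$ when $q\le L^B$, while $\Xi(q)\ll X^{-\sigma}$ for some fixed $\sigma=\sigma(\tet)>0$ when $L^B<q\le P$. This is the main obstacle. Sorting the primes in (\ref{2.1}) into residue classes $b\bmod q$ with $(b,q)=1$, for which $e(ap^k/q)=e(ab^k/q)$, one reduces this to a prime number theorem for primes in short intervals in arithmetic progressions: expressing the inner sum as a Riemann--Stieltjes integral against $\psi(t;q,b)$ and integrating by parts in the weight $e(\bet t^k)$, the oscillation is reassembled by a further Abel summation over the subintervals of $[X-Y,X+Y]$, of length $\gg X^{-1}Y^2P^{-1}=X^{2\tet-1-2K\del}$, on which $\bet t^k$ varies by $O(1)$, so that in the end one needs only $\psi(u;q,b)-\psi(u-h;q,b)=h/\phi(q)+O(h\,\Xi(q))$ uniformly for $h\gg X^{2\tet-1-2K\del}$ and $q\le P$. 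Now $X^{2\tet-1-2K\del}$ exceeds $X^{7/12+\eps}$ precisely because $\tet>\tfrac{19}{24}$ and $\del$ is small, whence the required count follows from the Siegel--Walfisz theorem when $q\le L^B$, and from the zero-density estimates for enlarged major arcs of Liu and Zhan when $L^B<q\le P$, in the shape recorded in \cite[Lemmata 6.4 and 8.3]{LZ2012}. Summing over $b$ reassembles the factor $S(q,a)/\phi(q)$, on noting that $v(\bet)$ differs from $k^{-1}\int_{X-Y}^{X+Y}e(\bet t^k)\d t$ by $O(X^{1-k})$.

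Granted this approximation, I substitute into (\ref{2.4}) and replace $f(a/q+\bet)^s$ by $(\phi(q)^{-1}S(q,a)v(\bet))^s$. Using $|z^s-w^s|\ll|z-w|(|z|^{s-1}+|w|^{s-1})$ together with the trivial bounds $f(\alp),v(\bet)\ll Y$, the decay estimate $v(\bet)\ll\min\{Y,X^{1-k}|\bet|^{-1}\}$, and the standard bound $|S(q,a)|\ll q^{1-1/k+\eps}$, the resulting error is $\ll Y^{s-1}X^{1-k}(L^{-A+O(1)}+P^{O(1)}X^{-\sigma})$, whose second term is a genuine power saving once $\del$ is taken small enough in terms of $\sigma$ (as we are free to do). There remains the main term
\[
\sum_{q\le P}\phi(q)^{-s}\sum_{\substack{a=1\\(a,q)=1}}^{q}S(q,a)^se(-na/q)\int_{|\bet|\le(qQ)^{-1}}v(\bet)^se(-n\bet)\d\bet .
\]
Completing the inner integral to $\int_0^1$ and the outer sum to $\sum_{q=1}^{\infty}$ introduces only errors that are $\ll Y^{s-1}X^{1-k}X^{-c}$, in view of the decay $v(\bet)\ll X^{1-k}|\bet|^{-1}$, the hypothesis $\tet<1$, and the standard estimates for the complete sums $S(q,a)$ (this last being where the assumption $s\ge\min\{5,k+2\}$ enters, the borderline case $k=2$, $s=4$ requiring also the explicit evaluation of the quartic Gauss sum); these completions extract the factors $\grJ(n)$ and $\grS(n)$ respectively. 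Assembling all of the above yields $\rho_s(n;\grM)=\grS(n)\grJ(n)+O(Y^{s-1}X^{1-k}L^{-1})$, where the loss of merely a power of $\log X$, rather than a power of $X$, is attributable solely to the Siegel--Walfisz range $q\le L^B$ in the second step.
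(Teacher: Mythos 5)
There is a genuine gap, and it is the crux of the whole section. Your plan rests on a pointwise approximation $f(\alp)=\phi(q)^{-1}S(q,a)v(\bet)+O(Y\,\Xi(q))$ for \emph{every} $q\le P=X^{2K\del}$, with $\Xi(q)\ll L^{-A}$ for $q\le L^B$ and $\Xi(q)\ll X^{-\sigma}$ for $L^B<q\le P$. No such estimate is available. The citation is misdirected: \cite[Lemmata 6.4 and 8.3]{LZ2012} concern the completed singular integral and singular series, not primes in short intervals in arithmetic progressions. Unconditionally, even with $q$ bounded, Huxley's theorem together with the Vinogradov--Korobov zero-free region yields at best $\psi(u;q,b)-\psi(u-h;q,b)=h/\phi(q)+O\bigl(h\exp(-c(\log X)^{1/5})\bigr)$, i.e.\ $\Xi(q)=X^{o(1)}$, \emph{not} $X^{-\sigma}$. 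For $q$ in the range $L^B<q\le X^{2K\del}$ there is no unconditional pointwise result at all; this is precisely what forces the enlarged major arc technique of Liu and Zhan.

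Even granting such an oracle with $\Xi(q)\ll L^{-A}$, your error propagation does not close. Writing $g(\alp)=\phi(q)^{-1}S(q,a)v(\bet)$ and $E=f-g$, the contribution of the terms in $(g+E)^{s}$ with many factors of $E$ cannot be controlled by the decay of $v$, since $E$ has no decay in $\bet$: the pure $E^{s}$ term contributes $\ll \sum_{q\le P}\phi(q)(qQ)^{-1}(Y\Xi(q))^{s}\ll P^{2}X^{2-k}Y^{s-2}L^{-As}$, which exceeds $Y^{s-1}X^{1-k}$ by the factor $X^{4K\del+1-\tet}L^{-As}$. Since $1-\tet>0$ and $P$ is a power of $X$, this is a positive power of $X$ and no choice of $A$ saves it; the same happens for $j=s-1$. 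This is exactly the obstruction that the wide major arcs create. The paper avoids it by an entirely different route: decompose $f(a/q+\lam)=\phi(q)^{-1}C(q,a)V(\lam)+\phi(q)^{-1}\sum_\chi C(\chi,a)W(\chi,\lam)$ via Dirichlet characters, expand $f^{s}$ binomially so the arithmetic error $W$ enters linearly in each $I_j$ ($j\ge1$), and then bound the $I_j$ by the Liu iterative method using $L^{1}$- and $L^{2}$-\emph{averages over primitive characters} of $W(\chi,\lam)$ (Lemmata \ref{lemma5.2}, \ref{lemma5.3}), powered by Gallagher's lemma and the Choi--Kumchev hybrid mean value theorem. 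The character averaging supplies a decaying power of the modulus (via the factor $[g,r]^{1+\nu-s/2}$ with $s\ge5$) that makes the sums over $q\le P$ converge; no pointwise estimate can replicate this. Your remark about the $k=2$, $s=4$ case is in the right spirit (though it is the quadratic, not quartic, Gauss sum), but is peripheral: the essential missing ingredient is the replacement of a pointwise bound on $f-g$ by a character-averaged one.
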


\begin{proof} The desired conclusion may be established by following the argument of the proof of 
\cite[Proposition 5.1]{KL2012}. The latter argument is superficially restricted to the case $k=2$ and 
$s=4$, but the generalisation to arbitrary exponents $k$ and $s\ge 5$ causes no extra difficulties. We 
provide details of the necessary argument in \S\S6--8 below. We note that compared to 
\cite[Proposition 5.1]{KL2012}, we have an additional weight $\log p$ within our definition (\ref{2.1}) 
of the exponential sum $f(\alp)$. This again is easily accommodated within the argument of the proof 
of \cite[Proposition 5.1]{KL2012}. The reader may find it illuminating to compare with the argument 
of the proof of \cite[Proposition 2.1]{ST2009}, which has the potential to establish the conclusion of our 
proposition subject to the more restricted hypotheses $s\ge 5$ and $Y\ge X^{4/5+\eps}$.
\end{proof}

Suppose that $k\ge 2$, $s>t_k$ and $Y>X^\tet$. Then by combining (\ref{2.6}) with the conclusion of 
Proposition \ref{prop2.1}, it follows that whenever $n\equiv s\mmod{R(k)}$ (and, in case $k=3$ and 
$s=7$, one has also $9\nmid n$), then
\begin{equation}\label{2.7}
\rho_s(n;\grM)\gg Y^{s-1}X^{1-k}.
\end{equation}

\par In order to estimate the minor arc contribution $\rho_s(n;\grm)$, in \S3 we prepare an analogue of 
Hua's lemma. We recall the definition of $t_k$ from (\ref{1.2}).

\begin{proposition}\label{prop2.2}
Suppose that $Y$ is a real number with $Y\ge X^{1/2}$. Then whenever $s\ge 2t_k$ and $\eps>0$, one 
has
$$\int_0^1|f(\alp)|^s\d\alp \ll Y^{s-1}X^{1-k+\eps}.$$
\end{proposition}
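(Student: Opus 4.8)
The plan is to deduce this mean value estimate from the work of Daemen on Waring's problem in short intervals, which (via the binomial descent method, and building on the recent progress on Vinogradov's mean value theorem in \cite{Woo2014}) yields essentially optimal moment estimates for the exponential sum $g(\alp)=\sum_{|m-X|\le Y}e(m^k\alp)$ over integers in short intervals as soon as the number of variables reaches $t_k$. More precisely, one expects from that circle of ideas an estimate of the shape $\int_0^1|g(\alp)|^{2t_k}\,\d\alp\ll Y^{2t_k-1}X^{1-k+\eps}$, valid for $Y\ge X^{1/2}$. The first step is to record this estimate carefully, extracting it from \cite{Dae2010a, Dae2010b} and reconciling the normalisations: in Daemen's formulation the relevant mean value counts solutions of $m_1^k+\cdots+m_{t_k}^k=m_{t_k+1}^k+\cdots+m_{2t_k}^k$ with all $m_i$ lying in an interval of length $Y$ around $X$, and the diagonal contribution is of size $Y^{t_k}$, while the expected total is $Y^{2t_k-1}X^{1-k}$ up to $X^\eps$; the condition $Y\ge X^{1/2}$ is exactly what guarantees that the off-diagonal term dominates so that the answer has the clean shape above.

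The second step is the standard passage from the prime-variable sum $f(\alp)$ to the integer-variable sum $g(\alp)$. Since $f(\alp)=\sum_{|p-X|\le Y}(\log p)e(p^k\alp)$ has the extra weight $\log p\le \log(2X)=L$, one has trivially $|f(\alp)|\le L\,|h(\alp)|$ where $h(\alp)=\sum_{|p-X|\le Y}e(p^k\alp)$, and $h$ is a subsum of $g$ restricted to prime $m$. I would then either appeal to the trivial majorisation $|h(\alp)|\le|g(\alp)|$ after Cauchy--Schwarz on the residue classes, or (more cleanly) note that the number of solutions counted by $\int_0^1|f(\alp)|^{s}\,\d\alp$ with $s=2t_k$ is, up to the factor $L^{2t_k}=X^\eps$, bounded by the number of solutions of $p_1^k+\cdots+p_{t_k}^k=p_{t_k+1}^k+\cdots+p_{2t_k}^k$ in primes from the short interval, which is in turn at most the corresponding count over \emph{all} integers in that interval, namely $\int_0^1|g(\alp)|^{2t_k}\,\d\alp$. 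Combining with the Daemen-type estimate from the first step gives $\int_0^1|f(\alp)|^{2t_k}\,\d\alp\ll Y^{2t_k-1}X^{1-k+\eps}$. Finally, for $s\ge 2t_k$ one uses the trivial bound $|f(\alp)|\le \sum_{|p-X|\le Y}\log p\ll Y$ to write $|f(\alp)|^s\le (CY)^{s-2t_k}|f(\alp)|^{2t_k}$, which upon integration yields $\int_0^1|f(\alp)|^s\,\d\alp\ll Y^{s-2t_k}\cdot Y^{2t_k-1}X^{1-k+\eps}=Y^{s-1}X^{1-k+\eps}$, as required.

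The main obstacle is the first step: one must verify that Daemen's results, which are phrased for Waring's problem and perhaps under slightly different hypotheses on the length of the interval or on the admissible range of $k$, genuinely deliver the stated mean value with the number of variables precisely $t_k=k(k-1)$ (for $k\ge3$) and $t_2=3$, and with the saving $X^{1-k+\eps}$ over the diagonal uniformly for $Y\ge X^{1/2}$. This is where the dependence on \cite{Woo2014} enters, since the sharpened Vinogradov mean value theorem is what brings the number of variables down to $t_k$; I would devote the bulk of \S3 to making this extraction rigorous, including the book-keeping that converts Daemen's counting functions into a bound for the $2t_k$-th moment of $g$, and checking that the endpoint case $Y=X^{1/2}$ is covered. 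The remaining steps — removing the logarithmic weights, restricting from integers to primes, and the trivial interpolation up to general $s\ge 2t_k$ — are routine and contribute nothing beyond factors absorbed into $X^\eps$.
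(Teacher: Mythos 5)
Your proposal takes the same route as the paper: the integer-variable moment bound is obtained by combining Daemen's Theorem 3 (Lemma~\ref{lemma3.1}) with the sharpened Vinogradov mean value estimate (Lemma~\ref{lemma3.2}, drawing on \cite{Woo2014} for $k\ge 3$ and on Hua's classical bound for $k=2$); the passage to primes is by orthogonality, since every prime solution of $p_1^k+\cdots+p_{t_k}^k=p_{t_k+1}^k+\cdots+p_{2t_k}^k$ is an integer solution and the weights $\log p$ contribute only a factor $L^{2t_k}=X^{o(1)}$; and the extension from $2t_k$ to $s\ge 2t_k$ is the trivial interpolation you describe. One caveat: your first suggested passage from $g$ to $h$, namely the pointwise bound $|h(\alp)|\le|g(\alp)|$, is false in general — an exponential sum over a subset of an interval is not pointwise dominated by the sum over the whole interval — so you must use your second alternative (comparing solution counts via orthogonality), which is precisely what the paper does and what your aside ``more cleanly'' correctly signals.
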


Next, in \S4, we establish an estimate of Weyl-type that delivers non-trivial estimates throughout 
the set of minor arcs $\grm$. Recall here the definition of $K$ given in (\ref{2.0}).

\begin{proposition}\label{prop2.3}
Let $\tet$ be a real number with $\tfrac{5}{6}<\tet<1$, and suppose that $X$ and $Y$ are real numbers 
with $X^\tet\le Y\le X$. Then, whenever $a\in \dbZ$ and $q\in \dbN$ satisfy $(a,q)=1$ and 
$|\alp-a/q|\le q^{-2}$, one has
$$f(\alp)\ll X^\eps Y\left( \Xi^{-1}+X^{-1/2}+\Xi Y^{-2}X^{2-k}\right)^{1/K},$$
where $\Xi=q+Y^2X^{k-2}|q\alp-a|$.
\end{proposition}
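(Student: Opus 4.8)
The plan is to reduce the Weyl-type estimate for the prime-supported exponential sum $f(\alp)$ to a mean-value estimate coming from Daemen's work on short-interval moments and the latest Vinogradov mean value bounds of \cite{Woo2014}, via a bilinear-form (Vaughan-type) decomposition. First I would apply a Vaughan-type identity to the sum $f(\alp)=\sum_{|p-X|\le Y}(\log p)e(p^k\alp)$, splitting it into $O(L^c)$ pieces of ``type I'' ($\sum_{m}a_m\sum_{n}e((mn)^k\alp)$ with $m$ short and the inner sum smooth, so amenable to van der Corput or Poisson in the inner variable) and ``type II'' bilinear sums $\sum_{m\sim M}\sum_{n\sim N}a_m b_n e((mn)^k\alp)$, where the coefficients satisfy $|a_m|,|b_n|\ll X^\eps$ and the relevant product ranges cover $|mn-X|\le Y$, with both $M$ and $N$ in a range like $X^{\mu}\le M,N\le X^{1-\mu}$ for a suitable small $\mu>0$. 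The key point is that because the variables are confined to a short interval, the inner variable $n$ ranges over an interval of length roughly $Y/m$, so the relevant quantity is a short-interval moment of $\sum_{|n-Z|\le W}e((mn)^k\alp)$ rather than a full-length Weyl sum.

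For the type II sums, I would apply Cauchy--Schwarz in $m$ to bring out a sum of the form $\sum_{m\sim M}\bigl|\sum_{|n-Z|\le W}b_n e(m^k n^k\alp)\bigr|^2$, expand the square, and then the diagonal and off-diagonal contributions are governed, after a further Weyl-differencing or direct appeal, by the short-interval mean-value estimate underlying Proposition \ref{prop2.2} (equivalently Daemen's binomial descent estimates, \cite{Dae2010a, Dae2010b}), together with the classical counting of $m$ with $m^k\alp$ near a rational of small denominator. This is precisely where the exponent $K$ enters: the available mean value of order $2t_k$ gives a saving of a power $1/K$ once one balances the major-arc-type contribution (governed by $q+Y^2X^{k-2}|q\alp-a|=\Xi$ being small, which forces $\Xi^{-1}$ into the bound), the diagonal contribution (which after normalisation contributes the $X^{-1/2}$ term), and the ``completion loss'' from passing between short and complete sums (which contributes the $\Xi Y^{-2}X^{2-k}$ term, i.e. the reciprocal of $Q$). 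The condition $\tfrac 56<\tet<1$ ensures the type I/II ranges are nonempty and the three error terms genuinely compete.

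For the type I sums, I would use the smoothness of the inner sum: summing $e((mn)^k\alp)$ over $n$ in an interval of length $\asymp Y/m$ yields, by the standard estimate for sums of $e(g(n))$ with $g''$ controlled, a bound $\ll \min\{Y/m, \|km^k\alp'\|^{-1/2}(\cdot)\}$ type expression, where $\alp\approx a/q$; summing over $m\sim M$ with $M$ small and using the rational approximation then gives a saving comparable to the type II case. Assembling the $O(L^c)$ pieces, taking the worst, and absorbing logarithms into $X^\eps$ gives the stated bound. The main obstacle I anticipate is the bookkeeping in the type II case: one must ensure that after Cauchy--Schwarz and expansion the resulting mean value really is of a shape to which Daemen's short-interval estimates apply with the correct exponent, and that the interplay between the length $W$ of the short inner interval, the modulus $q$, and the parameter $\Xi$ is tracked carefully enough that all three terms in the final bound emerge with the right exponents --- in particular that the ``completion'' step from $[0,1)$-periodicity back to the short interval does not cost more than the claimed $(\Xi Y^{-2}X^{2-k})^{1/K}$. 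A secondary technical point is handling the ranges of $M,N$ for which one variable is too large or too small for the bilinear treatment; these extreme ranges must be dispatched by the type I argument or by the short-interval mean value directly, and verifying that the Vaughan decomposition parameters can be chosen to leave no gap is the kind of routine-but-delicate step that requires care.
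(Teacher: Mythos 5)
Your high-level strategy --- Vaughan's identity to split $f(\alp)$ into type I and type II pieces, then Daemen's short-interval estimates together with the Vinogradov mean value bounds of \cite{Woo2014} to estimate each --- is exactly the strategy of \S4 of the paper. However, the mechanisms you propose for both piece-types are too weak to deliver the exponent $1/K$. For the type II sums, a single application of Cauchy--Schwarz in $m$ is not enough: the paper's Lemma \ref{lemma4.1} applies H\"older's inequality twice, first with an even exponent $2w$ to the outer sum over $m$ (so that the resulting $2w$-fold sum over $n$-tuples can be organised in terms of the counting functions $\kap_{\mu,\nu}$ and $\ome_{\mu,\nu}$ and controlled via Lemmata \ref{lemma3.1} and \ref{lemma3.2}), and then a second H\"older with exponent $2t$; the conditions $t\ge t_k$ and $2w\ge t_k+2$ are essential for the short-interval mean values to engage, and one ends with the exponent $1/(4tw)$, which gives $1/K$ upon choosing $t=t_k$, $w=\lceil(t_k+2)/2\rceil$. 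Cauchy--Schwarz is the case $w=1$ and falls far short. The ``further Weyl-differencing'' you mention would reintroduce a $2^{1-k}$-type saving, losing the advantage of the Vinogradov-strength mean values.

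For the type I sums, the van der Corput/Poisson bound you propose for the inner smooth sum gives at best a derivative-test saving of order $(\text{length})^{-1/2}$, which for $k\ge 3$ cannot reproduce the saving $1/(8t_k)$ needed at this stage. The paper's Lemma \ref{lemma4.2a} instead invokes Daemen's pointwise major/minor arc bounds for the short Weyl sum $\Psi(\tet)=\sum_{|l-x/(mn)|\le y/(mn)}c_l e(l^k\tet)$ --- bounds that are themselves consequences of the $2t_k$-th short-interval moment rather than of any classical exponent pair --- and then performs a delicate Diophantine bootstrap: Dirichlet approximations $(b,r)$ of $(mn)^k\alp$ are related to approximations $(c,s)$ of $m^k\alp$ via the inequality $|rn^kc-sb|<1$, and these are then related to $(a,q)$ via $|sm^ka-qc|<1$, which is what allows the contribution to be expressed in terms of $q$ and $|q\alp-a|$ alone. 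This linkage is the technical core of the type I treatment and is invisible in your sketch. Finally, you omit the last step: the paper's argument actually yields Lemma \ref{lemma4.2}, a bound in terms of $q^{-1}+X^{-1/2}+qY^{-2}X^{2-k}$, and a transference principle (\cite[Lemma 14.1]{Woo2014a}) is then required to pass to the symmetric quantity $\Xi=q+Y^2X^{k-2}|q\alp-a|$ that appears in Proposition \ref{prop2.3} and is needed for the minor arc application.
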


We are now equipped to dispose of the minor arc contribution. Suppose that $\alp\in \grm$. By 
Dirichlet's theorem on Diophantine approximation, there exist $a\in \dbZ$ and $q\in \dbN$ with 
$q\le Q$, $(a,q)=1$ and $|q\alp-a|\le Q^{-1}$. The definition of $\grm$ ensures that $q>P$, and thus 
when $k\ge 4$ and $Y=X^\tet$, Proposition \ref{prop2.3} combines with the definition (\ref{2.3}) to 
deliver the bound
$$f(\alp)\ll X^\eps Y(P^{-1}+X^{-1/2}+QY^{-2}X^{2-k})^{1/K}\ll X^{\eps-2\del}Y.$$
Meanwhile, when $k=2$ or $3$, we deduce from the conclusion of \S5 that one has likewise the bound
\begin{equation}\label{2.9}
f(\alp)\ll X^{\eps-2\del}Y.
\end{equation}
We therefore conclude from Proposition \ref{prop2.2} that whenever $s>2t_k$, then for each $\eps>0$, 
one has
\begin{align*}
\rho_s(n;\grm)&\le \Bigl( \sup_{\alp\in \grm}|f(\alp)|\Bigr)^{s-2t_k}\int_\grm |f(\alp)|^{2t_k}\d\alp \\
&\ll (X^{\eps-2\del}Y)^{s-2t_k}Y^{2t_k-1}X^{1-k+\eps}\ll Y^{s-1}X^{1-k-\del}.
\end{align*}
Combining this estimate with (\ref{2.5}) and (\ref{2.7}), we see that under the hypotheses of Theorem 
\ref{theorem1.1}, one has the lower bound $\rho_s(n)\gg Y^{s-1}X^{1-k}$, and thus the proof of 
Theorem \ref{theorem1.1} is complete.\vskip.1cm

\section{Mean value estimates for primes in short intervals} Recall the definition (\ref{2.1}) of the 
exponential sum $f(\alp)$. A heuristic application of the circle method suggests that for all positive integers 
$t$, and for all real numbers $Y$ with $X^{1/2}\le Y\le X$, one should have the bound
\begin{equation}\label{3.1}
\int_0^1|f(\alp)|^{2t}\d\alp \ll Y^t(\log X)^t+Y^{2t-1}X^{1-k}.
\end{equation}
It may be shown using Hua's lemma, meanwhile, that when $2t\ge 2^k$, then
\begin{equation}\label{3.2}
\int_0^1|f(\alp)|^{2t}\d\alp \ll X^\eps Y^{2t-k}.
\end{equation}
It is apparent that when $2t\ge 2^k$ and $Y^{t-1}\ge X^{k-1}$, the bound (\ref{3.1}) is sharper than 
(\ref{3.2}) by a factor $(Y/X)^{k-1}$. It transpires that the methods of Daemen 
\cite{Dae2010a, Dae2010b} permit the proof of a serviceable substitute for (\ref{3.1}) when $t>t_k$, 
and it is this that explains in part the relative success of our approach over that of previous authors.\par

In order to proceed further, we must introduce some additional notation. Let
\begin{equation}\label{3.3}
F(\alp)=\sum_{|m-X|\le Y}e(m^k\alp).
\end{equation}
Also, denote by $J_{s,k}(X)$ the number of integral solutions of the Diophantine system
$$\sum_{i=1}^s(x_i^j-y_i^j)=0\quad (1\le j\le k),$$
with $1\le x_i,y_i\le X$ $(1\le j\le k)$.

\begin{lemma}\label{lemma3.1}
Suppose that $k$ and $t$ are natural numbers with $k\ge 2$. Then whenever $X^{1/3}\le Y\le X$, one 
has
$$\int_0^1|F(\alp)|^{2t}\d\alp \ll (1+Y^2/X)X^{2-k}Y^{\frac{1}{2}k(k+1)-3}J_{t,k}(Y).$$
\end{lemma}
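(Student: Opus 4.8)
The plan is to pass from the exponential sum $F(\alp)$ over $k$th powers $m^k$ with $|m-X|\le Y$ to an exponential sum over the coefficients of the polynomial obtained by shifting the variable, namely by writing $m=X+h$ with $|h|\le Y$ (say $X\in\dbN$ for convenience, or else replacing $X$ by $\lfloor X\rfloor$ and adjusting) and expanding
$$
(X+h)^k=\sum_{j=0}^{k}\binom{k}{j}X^{k-j}h^j.
$$
Thus $e(m^k\alp)=e(X^k\alp)\prod_{j=1}^{k}e\bigl(\binom{k}{j}X^{k-j}h^j\alp\bigr)$, and after absorbing the irrelevant unimodular factor $e(X^k\alp)$ we see that $F(\alp)$ is essentially a one-dimensional slice, in the direction $\bfbet=(\beta_1,\dots,\beta_k)$ with $\beta_j=\binom{k}{j}X^{k-j}\alp$, of the Weyl-type sum $g(\bfbet)=\sum_{|h|\le Y}e(\beta_1h+\beta_2h^2+\dots+\beta_kh^k)$ that underlies Vinogradov's mean value theorem at degree $k$ with variables of size $Y$.

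**The main step** is then to compare $\int_0^1|F(\alp)|^{2t}\d\alp$ with the full Vinogradov mean value $J_{t,k}(Y)=\int_{[0,1)^k}|g(\bfbet)|^{2t}\d\bfbet$. Counting solutions directly: $\int_0^1|F(\alp)|^{2t}\d\alp$ counts tuples $h_1,\dots,h_t,h_1',\dots,h_t'$ with $|h_i|,|h_i'|\le Y$ for which $\sum_i\bigl((X+h_i)^k-(X+h_i')^k\bigr)=0$, equivalently (after binomial expansion and because the $X^{k-j}$ are fixed nonzero constants) for which the single linear combination $\sum_{j=1}^{k}\binom{k}{j}X^{k-j}\bigl(\sum_i(h_i^j-h_i'^j)\bigr)=0$ holds. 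The quantity $J_{t,k}(Y)$, by contrast, counts tuples for which \emph{all} $k$ of the power sums $\sum_i(h_i^j-h_i'^j)$ vanish separately. The former is a relaxation of the latter: we must bound the number of tuples satisfying only the $j=k$-weighted aggregate equation in terms of $J_{t,k}(Y)$. This is done by the standard device of detecting the individual conditions $\sum_i(h_i^j-h_i'^j)=n_j$ for each admissible vector $\bfn=(n_1,\dots,n_k)$ with $|n_j|\ll Y^j$, observing that $n_j$ ranges over $O(Y^j)$ values for $2\le j\le k$ (and $n_1$ over $O(Y)$ values); the diagonal contribution $\bfn=\mathbf 0$ gives exactly $J_{t,k}(Y)$, while for each fixed nonzero shift the count is no larger than the diagonal count by a translation-invariance and Cauchy--Schwarz argument (the number of solutions of $\sum_i(h_i^j-h_i'^j)=n_j$ for all $j$ is maximised at $\bfn=\mathbf 0$). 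Summing the weighted count $\sum_{\bfn}$ against the constraint coming from the single equation $\sum_j\binom{k}{j}X^{k-j}n_j=0$ is where the arithmetic of the shift enters: for fixed $n_2,\dots,n_k$ this pins $n_1$ (since $X^{k-1}\ne0$), removing one factor of $Y$; one is left with $\prod_{j=2}^{k}(1+Y^j/\text{(something)})$-type savings. Collecting exponents, $\prod_{j=2}^{k}Y^j=Y^{\frac12 k(k+1)-1}$, and one further examines whether the modulus $X^{k-j}$ of the coefficient forces $n_j$ to lie in a coset, producing the extra factor $X^{2-k}$ and the harmless $(1+Y^2/X)$ accounting for carries when $Y$ is not small relative to $X$.

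**The hard part** will be executing this bookkeeping cleanly: keeping precise track of the ranges of the shift variables $n_j$ and of the divisibility constraints imposed by the fixed coefficients $\binom{k}{j}X^{k-j}$, so that the final exponent is exactly $\tfrac12 k(k+1)-3$ on $Y$ together with $X^{2-k}$ and the factor $1+Y^2/X$, rather than something weaker. One must be careful that the reduction genuinely isolates a \emph{single} linear equation (the degree-$k$ one), not $k$ of them, so no spurious additional savings are claimed; and one must ensure that the passage $m\mapsto X+h$ does not distort the count — it is exact when $X$ is an integer and costs only $O(1)$ adjustments otherwise. The comparison "count with one equation $\le Y^{\,\cdot}\times$ count with all $k$ equations'' is the technical heart, and it is precisely the input that allows the strong mean value estimates of Wooley \cite{Woo2014} for $J_{t,k}(Y)$ to be inherited by $F$, hence — after incorporating the prime-detecting sieve machinery in the subsequent argument — by $f(\alp)$, yielding Proposition~\ref{prop2.2}.
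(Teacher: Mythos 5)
The paper does not prove Lemma~3.1 itself: the stated proof simply cites Daemen \cite[Theorem~3]{Dae2010a} and remarks that the weaker lower bound $Y\ge X^{1/3}$ comes from the same argument. Your outline (shift $m=X+h$, condition on the power-sum vector $\bfn$ subject to the single equation $\sum_j\binom{k}{j}X^{k-j}n_j=0$, bound each $\bfn$-fibre by $J_{t,k}(Y)$ using the standard diagonal-maximality inequality, then count admissible $\bfn$) is indeed the framework underlying Daemen's ``binomial descent,'' so the strategy is sound.

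The gap is in the final counting step, which is where all the content lives. You assert that fixing $n_2,\dots,n_k$ ``pins $n_1$, removing one factor of $Y$,'' giving $\prod_{j=2}^kY^j=Y^{\frac12 k(k+1)-1}$ admissible $\bfn$, and then appeal vaguely to ``the modulus $X^{k-j}$ forcing $n_j$ into a coset'' to recover the factors $X^{2-k}$ and $1+Y^2/X$. But the naive count $Y^{\frac12 k(k+1)-1}$ exceeds the target $(1+Y^2/X)X^{2-k}Y^{\frac12 k(k+1)-3}$ by a factor $\asymp X^{k-2}Y^2/(1+Y^2/X)$, which is enormous, and no concrete mechanism for that saving is supplied; in particular the coefficient of $n_j$ is $\binom{k}{j}X^{k-j}$, and nothing in the single equation forces $n_j$ itself into a residue class modulo $X^{k-j}$. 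The correct count is $\prod_{j=2}^k(1+Y^j/X)$, and one sees it by tracking the partial sums $T_i=\sum_{j\le i}\binom{k}{j}X^{k-j}n_j$: once $T_{i-1}$ is chosen, $T_i$ lies in the arithmetic progression $T_{i-1}+\binom{k}{i}X^{k-i}\dbZ$ (common difference $\asymp X^{k-i}$, from integrality of $n_i$) \emph{and} in the window $|T_i|\ll X^{k-i-1}Y^{i+1}$ (since $T_i=-\sum_{j>i}\binom{k}{j}X^{k-j}n_j$), giving $\ll 1+Y^{i+1}/X$ choices at stage $i$. Only after this does the hypothesis $Y\ge X^{1/3}$ enter: it ensures $Y^j/X\ge1$ for $j\ge3$, so that $\prod_{j=3}^k(1+Y^j/X)\asymp X^{2-k}Y^{\frac12 k(k+1)-3}$, yielding the stated form. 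Your argument never invokes $Y\ge X^{1/3}$, which is a clear sign the bookkeeping has not actually been carried out; without that hypothesis the bound in the lemma simply does not follow from the chain of constraints.
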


\begin{proof} This is essentially \cite[Theorem 3]{Dae2010a}. The slightly smaller lower bound on 
$Y$ is accommodated by employing precisely the same proof as described in the latter source.
\end{proof}

The latest developments on Vinogradov's mean value theorem (for example, see 
\cite{Woo2012, Woo2013, Woo2014}) supply more powerful estimates for $J_{s,k}(X)$ than were 
available to Daemen.

\begin{lemma}\label{lemma3.2}
When $k\ge 2$ and $t\ge t_k$, one has $J_{t,k}(X)\ll X^{2t-\frac{1}{2}k(k+1)+\eps}$.
\end{lemma}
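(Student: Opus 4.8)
The plan is to derive Lemma \ref{lemma3.2} directly from the near-optimal bounds for Vinogradov's mean value integral that follow from the efficient congruencing method, supplemented by a trivial monotonicity argument in the number of variables. Recall that the main conjecture for Vinogradov's system asserts that $J_{s,k}(X)\ll X^{s+\eps}+X^{2s-\frac12 k(k+1)+\eps}$, the two terms being comparable when $s=\frac12 k(k+1)$, and the diagonal term $X^{2s-\frac12 k(k+1)}$ dominating beyond this critical index. The one numerical point underpinning the whole argument is the inequality $t_k\ge \frac12 k(k+1)$, valid for all $k\ge 2$ straight from the definition (\ref{1.2}) (with equality precisely when $k=2$ or $k=3$); consequently, throughout the range $t\ge t_k$ the conjectured bound reads simply $J_{t,k}(X)\ll X^{2t-\frac12 k(k+1)+\eps}$.

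First I would treat the base case $t=t_k$. When $k\ge 3$ one has $t_k=k(k-1)$, and the estimate $J_{s,k}(X)\ll X^{2s-\frac12 k(k+1)+\eps}$ for all $s\ge k(k-1)$ is precisely what the work \cite{Woo2014} on efficient congruencing delivers; taking $s=t_k$ gives the desired bound. When $k=2$ one has $t_2=3$, and the bound $J_{3,2}(X)\ll X^{3+\eps}$ is classical (it is the case $k=2$, $s=3$ of the main conjecture, provable by an elementary divisor argument). In either case the lemma holds at $t=t_k$.

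To pass from $t=t_k$ to arbitrary $t\ge t_k$, I would use the elementary inequality $J_{t+1,k}(X)\le X^2 J_{t,k}(X)$. This follows on writing $J_{t,k}(X)=\int_{[0,1)^k}\abs{g(\bfalp)}^{2t}\d\bfalp$ for the generating function $g(\bfalp)=\sum_{1\le x\le X}e(\alp_1 x+\cdots+\alp_k x^k)$ and bounding one factor $\abs{g(\bfalp)}^2$ by $X^2$. Iterating this $t-t_k$ times yields $J_{t,k}(X)\ll X^{2(t-t_k)}\cdot X^{2t_k-\frac12 k(k+1)+\eps}=X^{2t-\frac12 k(k+1)+\eps}$, as claimed.

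Since the substantive input is entirely the deep work of \cite{Woo2014} (and, for $k=2$, classical theory), there is no serious obstacle; the only steps requiring any care are the verification of $t_k\ge \frac12 k(k+1)$ — so that the right-hand term of the main-conjecture bound dominates — and the bookkeeping of exponents in the monotonicity step. One should also note the boundary cases $k=2$ and $k=3$, where $t_k=\frac12 k(k+1)$ exactly, so that the base case sits precisely at the critical index and no slack is available there.
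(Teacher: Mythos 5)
Your proposal is correct and relies on exactly the same inputs as the paper's proof: the classical bound $J_{3,2}(X)\ll X^{3+\eps}$ for $k=2$ (the paper cites Hua's Theorem 7), and the efficient-congruencing bound from \cite{Woo2014} for $k\ge3$, with the elementary monotonicity $J_{t+1,k}(X)\le X^2J_{t,k}(X)$ filling in the range $t\ge t_k$ (a step the paper leaves implicit for $k=2$, and which is unnecessary for $k\ge3$ since \cite{Woo2014} already covers all $t\ge k(k-1)$). Your extra observation that $t_k\ge\tfrac12k(k+1)$, with equality at $k=2,3$, is correct and usefully explains why only the diagonal term of the main conjecture appears in the statement.
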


\begin{proof} The estimate $J_{3,2}(X)\ll X^{3+\eps}$ is immediate from \cite[Theorem 7]{Hua1965}. 
Meanwhile, when $k\ge 3$ and $t\ge k(k-1)$, the bound $J_{t,k}(X)\ll X^{2t-\frac{1}{2}k(k+1)+\eps}$ 
is supplied by \cite[Theorem 1.2]{Woo2014}.
\end{proof}

We now establish Proposition \ref{prop2.2}. Suppose that $Y$ is a real number with $X^{1/2}\le Y\le X$, 
and that $s\ge 2t_k$. Then by applying the trivial estimate $|f(\alp)|=O(Y)$, we find that
\begin{equation}\label{3.4}
\int_0^1|f(\alp)|^s\d\alp \ll Y^{s-2t_k}I(t_k),
\end{equation}
where
$$I(t)=\int_0^1|f(\alp)|^{2t}\d\alp .$$
On recalling (\ref{2.1}), one finds by orthogonality that $I(t)$ counts the number of solutions of the 
equation
$$\sum_{i=1}^t(p_i^k-p_{i+t}^k)=0,$$
with $p_i$ prime and $|p_i-X|\le Y$ $(1\le i\le t)$, in which each solution is counted with weight
$$\prod_{i=1}^{2t}(\log p_i)\ll (\log X)^{2t}.$$
On considering the number of solutions of the underlying Diophantine equation, we therefore see from 
(\ref{3.3}) that
$$I(t)\ll (\log X)^{2t}\int_0^1|F(\alp)|^{2t}\d\alp .$$
We thus deduce from Lemma \ref{lemma3.1} that
$$I(t)\ll (1+Y^2/X)X^{2-k}Y^{\frac{1}{2}k(k+1)-3}J_{t,k}(Y)(\log X)^{2t},$$
whence by Lemma \ref{lemma3.2} and the hypothesis $Y\ge X^{1/2}$, one obtains the bound
$$I(t_k)\ll X^{1-k}Y^{\frac{1}{2}k(k+1)-1}(\log X)^{2t}Y^{2t_k-\frac{1}{2}k(k+1)+\eps}
\ll X^{1-k+2\eps}Y^{2t_k-1}.$$
The conclusion of Proposition \ref{prop2.2} is confirmed by substituting this estimate into (\ref{3.4}).

\section{Estimates of Weyl type, I: $k\ge 4$} Our goal in this section is the proof of Proposition 
\ref{prop2.3}. We begin with an auxiliary lemma concerning exponential sums having a bilinear structure. 
Let $a(m)$ and $b(n)$ be arithmetic functions satisfying the property that for all natural numbers $m$ and 
$n$, one has
\begin{equation}\label{4.1}
a(m)\ll m^\eps\quad \text{and}\quad b(n)\ll n^\eps.
\end{equation}
Let $M$ and $N$ be positive parameters, and define the exponential sum $T(\alp)=T(\alp;M,N)$ by
\begin{equation}\label{4.2}
T(\alp;M,N)=\max_{M'\le 2M}\max_{N'\le 2N}\biggl| \sum_{M<m\le M'}a(m)
\sum_{\substack{N<n\le N'\\ x<mn\le x+y}}b(n)e((mn)^k\alp)\biggr| .
\end{equation}

\begin{lemma}\label{lemma4.1} Let $c>0$ be fixed, and let $x$ and $y$ be positive numbers with 
$x^{3/4}\le y\le x$. Suppose that $M$ and $N$ are positive numbers with $MN\asymp x$ and 
$cx^2/y^2\le N\le c^{-1}y^2/x$. Suppose in addition that $t\ge t_k$ and $2w\ge t_k+2$. Then 
whenever $a\in \dbZ$ and $q\in \dbN$ satisfy $(a,q)=1$ and $|\alp-a/q|\le q^{-2}$, one has
$$T(\alp;M,N)\ll_c x^\eps y\biggl( \frac{1}{q}+\frac{x}{yN^k}+\frac{N^k}{x^{k-1}y}+
\frac{q}{x^{k-2}y^2}\biggr)^{1/(4tw)}.$$  
\end{lemma}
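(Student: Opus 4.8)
The plan is to estimate the bilinear sum $T(\alp;M,N)$ by a combination of Cauchy--Schwarz, a large-sieve/mean-value input coming from Daemen's work (as packaged in Lemma \ref{lemma3.1} and Lemma \ref{lemma3.2}), and the classical theory of exponential sums over arithmetic progressions. First I would dispose of the two maxima in (\ref{4.2}) by a standard splitting argument, reducing to the estimation of
$$\sum_{M<m\le M'}a(m)\sum_{\substack{N<n\le N'\\ x<mn\le x+y}}b(n)e((mn)^k\alp)$$
for fixed $M',N'$; the cost is only a factor $x^\eps$ in view of (\ref{4.1}). I would then apply Cauchy--Schwarz in the variable $m$, using the trivial bound $\sum_m |a(m)|^2\ll M^{1+\eps}$, to arrive at a bound of the shape $T^2\ll x^\eps M\sum_m\bigl|\sum_n b(n)e((mn)^k\alp)\bigr|^2$, where the range of $n$ is now the intersection of $(N,N']$ with the short interval dictated by $x<mn\le x+y$. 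Squaring out the inner sum and interchanging the order of summation introduces a difference variable, and after estimating the resulting count one is led to a mean value of the exponential sum over $k$th powers supported on a short interval of length comparable to $y/M\asymp N$, weighted by divisor-type factors. The point of the hypothesis $cx^2/y^2\le N\le c^{-1}y^2/x$ is exactly to keep this short interval in the range $Y\ge X^{1/2}$ (equivalently $N\ge (x/N)^{1/2}$, i.e.\ $N\ge x^{1/3}$) where Lemma \ref{lemma3.1} applies with a good exponent.

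The heart of the matter is then to convert that mean value, together with the diagonal contribution from the large-sieve step, into the four-term bound in the statement. Here I would split according to whether $m$ and $mn$ (hence the $k$th power $(mn)^k$) land on a major or minor arc relative to the approximation $|\alp-a/q|\le q^{-2}$. On the portion where $(mn)^k\alp$ behaves like a minor-arc point, I would invoke the Daemen-type mean value estimate: after one or two more applications of Hölder to create a $2t$th or $2w$th moment, Lemma \ref{lemma3.1} combined with Lemma \ref{lemma3.2} gives a saving of the shape $\bigl(1/q+q/(x^{k-2}y^2)\bigr)^{1/(2tw)}$ or so per variable, and the exponents $t\ge t_k$, $2w\ge t_k+2$ are precisely what make the Vinogradov input (via \cite{Woo2014}) essentially lossless. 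The terms $x/(yN^k)$ and $N^k/(x^{k-1}y)$ arise from the two extreme cases in which the $k$th power $(mn)^k$ ranges over too short a progression for the large sieve to beat the trivial bound: $x/(yN^k)$ from the top variable $m$ being too sparse and $N^k/(x^{k-1}y)$ from $N$ being too large so that $(mn)^k$ spreads over a window longer than the modulus. Tracking the exponent $1/(4tw)$ requires that the two Hölder steps be arranged so that the $m$-sum absorbs a $(2t)$-th power while the $n$-sum absorbs a $(2w)$-th power; the arithmetic then matches the stated $1/(4tw)$.

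The main obstacle, I expect, is the bookkeeping in the Hölder/Cauchy--Schwarz steps needed to land exactly on the exponent $1/(4tw)$ with the exact four error terms, while simultaneously respecting the window constraints coming from $x<mn\le x+y$ — the short-interval condition couples the ranges of $m$ and $n$, so one cannot separate the variables completely, and some care is needed to show that restricting $n$ to a sub-interval of length $O(y/m)$ of $(N,N']$ does not destroy the mean-value estimate (this is where Lemma \ref{lemma3.1}, which is stated for genuinely short intervals, is indispensable rather than a naive completion). A secondary technicality is handling the case distinction on the size of $q$ relative to $x^{k-2}y^2$: when $q$ is large one should use the minor-arc mean value, and when $q$ is small one instead exploits that the exponential sum over a residue class of modulus $q$ is close to its average; the crossover is engineered so that both regimes give the claimed $\bigl(q/(x^{k-2}y^2)\bigr)^{1/(4tw)}$ or $\bigl(1/q\bigr)^{1/(4tw)}$ contribution. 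Once these are in place, collecting the four terms and taking $1/(4tw)$-th powers yields the lemma; Proposition \ref{prop2.3} will then follow in \S4 by taking $a(m)$ and $b(n)$ to be the von Mangoldt/Vaughan-identity pieces of $f(\alp)$ with a suitable choice of $M,N$ in the permitted range.
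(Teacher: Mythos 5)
Your proposal correctly identifies the main ingredients — a Hölder step in each of the two bilinear variables, Daemen's short-interval mean-value input through Lemmata \ref{lemma3.1} and \ref{lemma3.2}, the role of the hypotheses $cx^2/y^2\le N\le c^{-1}y^2/x$ in keeping the induced short intervals long enough for those lemmata, and the significance of $t\ge t_k$ and $2w\ge t_k+2$ as the thresholds at which the Vinogradov input becomes essentially lossless. You also correctly identify the central difficulty, namely that the condition $x<mn\le x+y$ couples the ranges of $m$ and $n$. However, the closing mechanism you propose is not the one that makes the lemma work, and some structural details are missing.

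First, the four-term saving does not arise from a major/minor arc split in $\alp$, nor from a case distinction on the size of $q$ relative to $x^{k-2}y^2$. After the two Hölder steps, the paper ends up with a quantity of the form $\sum_v \omega(v)\min\{N^k y/x,\|v\alp\|^{-1}\}$, and the four terms $q^{-1}+x(yN^k)^{-1}+N^k(x^{k-1}y)^{-1}+q(x^{k-2}y^2)^{-1}$ drop out in a single stroke from the standard estimate \cite[Lemma 2.2]{Vau1997} for $\sum_v\min\{W/v,\|v\alp\|^{-1}\}$ under the hypothesis $|\alp-a/q|\le q^{-2}$. There is no arc dissection inside this lemma, and trying to run one would both duplicate work and make the bookkeeping worse.

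Second, the passage from the initial Hölder/Cauchy–Schwarz step to the exponent $1/(4tw)$ is under-specified. The paper's first Hölder is directly with exponent $2w$ in $m$ (not Cauchy–Schwarz), producing a $2w$-tuple $\bfn$; after ordering $n_1\le\cdots\le n_{2w}$ and observing that $x/n_1<m\le(x+y)/n_{2w}$ forces $n_{2w}<n_1(1+y/x)$, one introduces the counting functions $\kappa_{\mu,\nu}(u)$ for the $2w-2$ middle $n$-variables and $\omega_{\mu,\nu}(v)$ for the $2t$-fold $m$-differences, and then applies a second Hölder that splits $T_1$ as $T_3^{1-1/t}T_4^{1/(2t)}T_5^{1/(2t)}$. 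It is exactly $T_4$ (a $(4w-4)$-th moment in $n$, needing $2w-2\ge t_k$) and $\omega_{\mu,\nu}$ (a $2t$-th moment in $m$, needing $t\ge t_k$) that call on Lemmata \ref{lemma3.1} and \ref{lemma3.2}, with the short-interval lengths $\mu y/x$ and $y/\nu$ kept above the square-root thresholds precisely by the hypotheses on $N$. Your proposal gestures at "one or two more applications of Hölder" but does not supply the decomposition that actually produces $1/(4tw)$ with the correct side constraints, and as written the arc-splitting alternative would not yield the lemma without substantial additional work.
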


\begin{proof} Throughout this proof, implicit constants may depend on $c$. Let $M'$ and $N'$ be real 
numbers corresponding to the maxima in (\ref{4.2}). Given a $2w$-tuple $\bfn$, write
$$\btil(\bfn)=b(n_1)\ldots b(n_w){\overline{b(n_{w+1})}}\ldots {\overline{b(n_{2w})}}$$
and
$$\sig(\bfn)=\sum_{i=1}^w(n_i^k-n_{w+i}^k).$$
Then in view of the hypothesis (\ref{4.1}), an application of H\"older's inequality leads from 
(\ref{4.2}) to the bound
\begin{equation}\label{4.3}
|T(\alp)|^{2w}\ll x^\eps y^{2w}T_1(\alp),
\end{equation}
where
\begin{equation}\label{4.4}
T_1(\alp)=(M/y)^{2w}M^{-1}\sum_{m\sim M}\sum_\bfn \btil(\bfn)e(m^k\sig(\bfn)\alp ),
\end{equation}
in which the summation is over $2w$-tuples $\bfn$ satisfying
$$N<n_i\le N'\quad \text{and}\quad x/m<n_i\le (x+y)/m\quad (1\le i\le 2w).$$

\par By interchanging the order of summation in (\ref{4.4}), we obtain the bound
\begin{equation}\label{4.5}
T_1(\alp)\ll (M/y)^{2w}M^{-1}\sum_{n_1\sim N}\ldots \sum_{n_{2w}\sim N}|\btil(\bfn)| \biggl| 
\sum_{m\sim M}e(m^k\sig(\bfn)\alp)\biggr| ,
\end{equation}
where the summation over $m$ is subject to the constraint
\begin{equation}\label{4.6}
x/n_i<m\le (x+y)/n_i\quad (1\le i\le 2w).
\end{equation}
Next, invoking symmetry, one discerns that the bound (\ref{4.5}) remains valid if we impose the 
additional condition $n_1\le n_2\le \ldots \le n_{2w}$. The condition (\ref{4.6}) then becomes 
$x/n_1<m\le (x+y)/n_{2w}$, a constraint that implies the relation $n_{2w}<n_1(1+y/x)$. We therefore 
see that, for the $2w$-tuples $\bfn$ at hand, one has
$$|n_{i+1}^k-n_{w+i}^k|\le \left( n_1(1+y/x)\right)^k-n_1^k\le k(n_1y/x)\left( n_1(1+y/x)
\right)^{k-1},$$
whence
$$\biggl| \sum_{i=1}^{w-1}(n_{i+1}^k-n_{w+i}^k)\biggr|\le wk(2n_1)^ky/x.$$

\par Denote by $\kap_{\mu,\nu}(u)$ the number of integral solutions of the equation
$$\sum_{i=1}^{w-1}(n_{i+1}^k-n_{w+i}^k)=u,$$
with
$$\mu\le n_2\le \ldots \le n_{2w-1}\le \nu .$$
Then, again applying (\ref{4.1}), we arrive at the upper bound
\begin{equation}\label{4.7}
T_1(\alp)\ll x^\eps (M/y)^{2w}M^{-1}\sum_{\substack{N<\mu\le \nu \le 2N\\ \nu<\mu (1+y/x)}}
\sum_u \kap_{\mu,\nu}(u)|T_2(\alp;\mu,\nu;u)|,
\end{equation}
where the summation over $u$ is subject to the condition
\begin{equation}\label{4.8}
|u|\le wk(2\mu )^ky/x,
\end{equation}
and
\begin{equation}\label{4.9}
T_2(\alp;\mu,\nu;u)=\sum_{\substack{m\sim M\\ x/\mu<m\le (x+y)/\nu}}e(m^k(\mu^k-\nu^k-u)\alp).
\end{equation}

\par Suppose next that $\nu<\mu (1+y/x)$, consistent with the summation condition in (\ref{4.7}). 
Then another application of H\"older's inequality conveys us from (\ref{4.9}) to the bound
\begin{equation}\label{4.10}
\sum_u \kap_{\mu,\nu}(u)|T_2(\alp;\mu,\nu;u)|\le T_3(\mu,\nu)^{1-1/t}T_4(\mu,\nu)^{1/(2t)}
T_5(\alp;\mu,\nu)^{1/(2t)},
\end{equation}
where
\begin{equation}\label{4.11}
T_3(\mu,\nu)=\sum_u\kap_{\mu,\nu}(u),\quad T_4(\mu,\nu)=\sum_u \kap_{\mu,\nu}(u)^2,
\end{equation}
\begin{equation}\label{4.12}
T_5(\alp;\mu,\nu)=\sum_u|T_2(\alp;\mu,\nu;u)|^{2t},
\end{equation}
and the summations over $u$ are again subject to (\ref{4.8}).\par

We estimate the sums $T_3$, $T_4$ and $T_5$ in turn. Note that our hypothesis $cx^2/y^2\le N$ 
ensures that, in the situation at hand, one has
$$(\mu y/x)^2\mu^{-1}\ge (y^2/x^2)N\ge c,$$
whence $\mu y/x\ge (c\mu)^{1/2}$. Then the definition of 
$\kap_{\mu,\nu}(u)$ takes us from (\ref{4.11}) to the bound
\begin{equation}\label{4.13}
T_3(\mu,\nu)\le (\nu-\mu+1)^{2w-2}\ll (\mu y/x)^{2w-2}.
\end{equation}

Next, when $U$ and $V$ are non-negative numbers, define the Weyl sum
\begin{equation}\label{4.14}
W(\alp;U,V)=\sum_{U\le m\le U+V}e(m^k\alp).
\end{equation}
Since $\mu y/x\ge (c\mu)^{1/2}$, we are led from (\ref{4.11}) via orthogonality and Lemma 
\ref{lemma3.1} to the bound
\begin{align*}
T_4(\mu,\nu)&\le \int_0^1|W(\alp;\mu,\nu-\mu)|^{4w-4}\d\alp \\
&\ll (\mu y/x)^{\frac{1}{2}k(k+1)-1}\mu^{1-k}J_{2w-2,k}(\mu y/x).
\end{align*}
Provided that $N<\mu \le \nu\le 2N$ and $2w-2\ge t_k$, therefore, we deduce from Lemma 
\ref{lemma3.2} that
\begin{equation}\label{4.15}
T_4(\mu,\nu)\ll x^\eps \mu^{1-k}(\mu y/x)^{4w-5}.
\end{equation}

\par Recall that $MN\asymp x$, and that the ambient summation conditions ensure that 
$N\le \mu \le 2N$. Then by substituting (\ref{4.13}) and (\ref{4.15}) into (\ref{4.10}), we deduce 
from (\ref{4.7}) that
\begin{align}
|T_1(\alp)|^{2t}\ll &\, x^\eps (M/y)^{4tw}M^{-2t}(N^2y/x)^{2t}(Ny/x)^{(2w-2)(2t-2)}\notag \\
&\, \times N^{1-k}(Ny/x)^{4w-5}\max_{\mu,\nu}T_5(\alp;\mu,\nu)\notag \\
\ll &\, x^\eps (N^ky/x)^{-1}(y/N)^{-2t}\max_{\mu,\nu}T_5(\alp;\mu,\nu),\label{4.16}
\end{align}
where the maximum over $\mu$ and $\nu$ is subject to the conditions
\begin{equation}\label{4.17}
N<\mu\le \nu\le 2N\quad \text{and}\quad \nu<\mu(1+y/x).
\end{equation}

\par We next turn to the estimation of $T_5(\alp;\mu,\nu)$. Denote by $\ome_{\mu,\nu}(v)$ the number 
of integral solutions of the equation
$$\sum_{i=1}^t(m_i^k-m_{t+i}^k)=v,$$
with
\begin{equation}\label{4.18}
x/\mu<m_i\le (x+y)/\nu \quad \text{and}\quad m_i\sim M\quad (1\le i\le 2t).
\end{equation}
Note that our hypothesis $N\le c^{-1}y^2/x$ combines with (\ref{4.17}) to ensure that, in the situation at 
hand, one has
$$(y/\nu)^2(x/\nu)^{-1}\ge (y^2/x)(2N)^{-1}\ge \tfrac{1}{2}c,$$
whence $y/\nu\ge (\tfrac{1}{2}c)^{1/2}(x/\nu)^{1/2}$. Observe also that 
$\ome_{\mu,\nu}(v)\le \ome_{\nu,\nu}(v)$. Then on recalling the definition (\ref{4.14}), we are led via 
orthogonality and Lemma \ref{lemma3.1} to the bound
\begin{align*}
\ome_{\mu,\nu}(v)&\le \int_0^1 |W(\alp;x/\nu,y/\nu)|^{2t}\d\alp \\
&\ll (y/\nu)^{\frac{1}{2}k(k+1)-1}(x/\nu)^{1-k}J_{t,k}(y/\nu).
\end{align*}
Provided that $N<\mu\le \nu\le 2N$ and $t\ge t_k$, therefore, we deduce from Lemma \ref{lemma3.2} 
that
\begin{equation}\label{4.19}
\ome_{\mu,\nu}(v)\ll (x/\nu)^{1-k}(y/\nu)^{2t-1+\eps}.
\end{equation}

Observe next that the conditions (\ref{4.18}) ensure that
$$\biggl| \sum_{i=1}^t(m_i^k-m_{t+i}^k)\biggr| \le tk(y/\nu)\left( (x+y)/\nu\right)^{k-1}.$$
Then by expanding (\ref{4.12}) and interchanging the order of summation, we find that
$$T_5(\alp;\mu,\nu)\le \sum_v \ome_{\nu,\nu}(v)\biggl| \sum_u
e(v(\mu^k-\nu^k-u)\alp )\biggr|,$$
where the summations over $u$ and $v$ are subject to the conditions (\ref{4.8}) and
$$|v|\le tk(y/\nu)(2x/\nu)^{k-1}.$$
We therefore deduce from (\ref{4.19}) that
\begin{align*}
T_5(\alp;\mu,\nu)&\ll \sum_v\ome_{\nu,\nu}(v)\min \{ N^ky/x,\| v\alp\|^{-1}\}\\
&\ll (x/\nu)^{1-k}(y/\nu)^{2t-1+\eps}\sum_v\min \{ x^{k-2}y^2/v,\|v\alp\|^{-1}\} .
\end{align*}
Thus, as a consequence of \cite[Lemma 2.2]{Vau1997}, we conclude that
\begin{equation}\label{4.21}
(N^ky/x)^{-1}(y/N)^{-2t}\max_{\mu,\nu}T_5(\alp;\mu,\nu)\ll (qx)^\eps \Tet ,
\end{equation}
where, in view of the hypotheses of the lemma concerning $\alp$, one has
$$\Tet \ll \frac{1}{q}+\frac{x}{yN^k}+\frac{N^k}{x^{k-1}y}+\frac{q}{x^{k-2}y^2}.$$
The conclusion of the lemma now follows by substituting (\ref{4.21}) into (\ref{4.16}), and thence 
into (\ref{4.3}).
\end{proof}

In advance of the next lemma, we introduce the positive real number
$$\sig_k=(8t_k)^{-1}\quad (k\ge 2).$$
The following result is an analogue for short intervals of \cite[Lemma 3.2]{KW2001}. We proceed in 
greater generality than is necessary for the application within this paper, since this conclusion is likely to 
find application elsewhere and the additional generality comes at little cost.

\begin{lemma}\label{lemma4.2a}
Let $x$ and $y$ be positive numbers with $1\le y\le x$, and suppose that $\sig$ is a real number with 
$0<\sig\le \sig_k$. Suppose also that $M$ and $N$ are positive numbers with 
$(MN)^{2/3-\sig}\le x^{-1}y^{5/3-\sig}$,
\begin{equation}\label{4.w0}
M^{1-2\sig}N^{2-2\sig}\le x^{1-2/k}y^{2/k-2\sig}\quad \text{and}\quad M^{2-2k\sig}N^{-2k\sig}\le 
x^{1-2/k}y^{2/k-2k\sig}.
\end{equation}
Suppose that $(a_m)$, $(b_n)$ and $(c_n)$ are sequences of complex numbers satisfying 
$|a_m|\le 1+\log m$ and $|b_n|\le 1$ for each $m$ and $n$, and with $c_l=1$ for all $l$, or 
$c_l=\log l$ for all $l$. Suppose further that $\alp$ is a real number, and that there exist $a\in \dbZ$ and 
$q\in \dbN$ satisfying
\begin{equation}\label{4.wa}
(a,q)=1,\quad 1\le q\le (x^{k-2}y^2)^{1/2}\quad \text{and}\quad |q\alp-a|\le (x^{k-2}y^2)^{-1/2}.
\end{equation}
Then one has
\begin{align*}
\sum_{1\le m\le M}a_m\sum_{1\le n\le N}&b_n\sum_{|x-lmn|\le y}c_l e\left((lmn)^k\alp\right) \\
&\ll y^{1-\sig+\eps}(MN)^\sig +\frac{y^{1+\eps}}{\left( q+x^{k-1}y|q\alp-a|\right)^{1/k^2}}.
\end{align*}
\end{lemma}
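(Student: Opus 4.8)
The plan is to reduce the triple-linear sum to a combination of the bilinear estimate of Lemma \ref{lemma4.1} and a Weyl-type estimate for a single short Weyl sum, via a dyadic decomposition on the third variable. Write $L$ for the range of $l$, which by the constraint $|x-lmn|\le y$ with $mn\asymp x/L$ is essentially of length $\asymp y/(MN)$; more precisely for each fixed $m,n$ the variable $l$ runs over an interval of length $O(y/(mn))$ centred near $x/(mn)$. The first step is to split the sum according to the dyadic location of $mn$, say $mn\sim P_0$ with $1\le P_0\ll MN$, and within each block treat the product $lm$ (or $ln$) as a new ``long'' variable and the remaining one as ``short,'' so that the sum falls into the shape covered by (\ref{4.2}) after relabelling. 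The key point is that one is free to choose which of the two factorisations $(lm)\cdot n$ or $(ln)\cdot m$ to employ, and the two hypotheses in (\ref{4.w0}), together with the Dirichlet-type condition (\ref{4.wa}), are precisely what guarantee that in at least one of these factorisations the ``middle'' variable lies in a range $c\,\Xi^2/\Upsilon^2\le \cdot\le c^{-1}\Upsilon^2/\Xi$ (with $\Xi$ the relevant total length and $\Upsilon$ the short length) where Lemma \ref{lemma4.1} applies with $t=t_k$ and $2w=t_k+2$ (so $4tw=2t_k(t_k+2)=K$ when $k>2$).

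Next I would handle the complementary ranges where the hypotheses of Lemma \ref{lemma4.1} fail for both factorisations — these are exactly the ranges where $N$ (or the analogous middle variable) is too small or too large relative to $y^2/x$. In the too-large regime the innermost variable $l$ already runs over a long enough interval that a direct application of Weyl's inequality (in the short-interval form following from Lemma \ref{lemma3.1} combined with Lemma \ref{lemma3.2}, exactly as $T_4$ and $T_5$ were estimated inside the proof of Lemma \ref{lemma4.1}) gives a saving of the shape $(q+x^{k-1}y|q\alp-a|)^{-1/k^2}$, where the exponent $1/k^2$ arises from a standard Hua-type differencing reducing a $k$th-power Weyl sum over an interval of length $\ell$ to Vinogradov's system, costing a further factor roughly $2^{k-1}$ in the denominator exponent and hence the passage from $1/K$ to $1/k^2$. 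In the too-small regime one instead absorbs $l$ into the bilinear variable, and the first hypothesis $(MN)^{2/3-\sig}\le x^{-1}y^{5/3-\sig}$ ensures the residual main term is dominated by $y^{1-\sig+\eps}(MN)^\sig$. Throughout, the dyadic losses and the weights $a_m,b_n,c_l$ (bounded by $1+\log m$, $1$, and $1$ or $\log l$) contribute only $x^\eps$, which is why the $\eps$-powers are harmless and why $\sig\le\sig_k=(8t_k)^{-1}$ is imposed — it is exactly the threshold at which the exponent $1/(4t_k w)$ emerging from Lemma \ref{lemma4.1} beats the trivial bound after the $\sig$-power redistribution.

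The routine but somewhat intricate part is the bookkeeping of exponents: one must verify that in \emph{every} dyadic block at least one of the three treatments (bilinear via Lemma \ref{lemma4.1}, direct Weyl on $l$, or trivial estimation controlled by the first hypothesis) yields a bound no worse than the claimed right-hand side, and that the worst term across all blocks is of the stated form. I would organise this as a case split on the size of $MN$ relative to $y^2/x$ and $x^2/y^2$, mirroring the structure of the hypotheses (\ref{4.w0}), and in each case quote the corresponding estimate. The main obstacle is the middle regime where neither factorisation cleanly satisfies the range condition $c\,x^2/y^2\le N\le c^{-1}y^2/x$ of Lemma \ref{lemma4.1}: there one must interpolate, shrinking the short length by moving part of $l$ into the long variable and part into the short one, and checking that the two inequalities in (\ref{4.w0}) are exactly strong enough to make this interpolation succeed with the exponent $1/k^2$ rather than $1/K$. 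This is an adaptation to short intervals of the argument underlying \cite[Lemma 3.2]{KW2001}, and I would follow that template closely, substituting Lemma \ref{lemma4.1} for the long-interval bilinear estimate and Lemma \ref{lemma3.1} for the classical mean value input at each place where a short-interval moment is needed.
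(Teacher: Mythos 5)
Your proposal does not match the paper's argument, and several of its central claims are incorrect, so it would not close the gap.

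The paper does not reduce Lemma \ref{lemma4.2a} to the bilinear estimate of Lemma \ref{lemma4.1} at all. Lemma \ref{lemma4.2a} is a Type I estimate: the crucial hypothesis is that the innermost weight $c_l$ is smooth ($c_l=1$ or $c_l=\log l$), and the proof exploits this by applying Daemen's pointwise major/minor arc analysis to the single short Weyl sum $\Psi(\tet)=\sum_{|l-x/(mn)|\le y/(mn)}c_le(l^k\tet)$ at the point $\tet=(mn)^k\alp$, for each fixed pair $(m,n)$. On Daemen's minor arcs $\Psi$ is $\ll(y/(mn))^{1-\sig_k}$, and on the major arcs one has the approximation $\Psi(\tet)\ll (y/(mn))(q+(x/(mn))^{k-1}(y/(mn))|q\tet-a|)^{-1/k}+\Del$, with the hypothesis $(MN)^{2/3-\sig}\le x^{-1}y^{5/3-\sig}$ used precisely to absorb $\Del$. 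This pointwise bound is then summed over $m$ and $n$, giving $E_0+E_1$, and the whole difficulty is the estimation of $E_1$. Your plan of using the $(lm)\cdot n$ or $(ln)\cdot m$ factorisation cannot work in the regime where the lemma is actually applied: in \S4 it is invoked with $N=1$ and $m\le V\asymp X^{2-2\tet}$, i.e.\ the short variable $m$ lies \emph{below} the threshold $cx^2/y^2$ demanded by Lemma \ref{lemma4.1}. Lemma \ref{lemma4.2a} exists exactly to cover the Type I range that Lemma \ref{lemma4.1} cannot reach, so the claim that hypotheses (\ref{4.w0}) place the middle variable in the admissible window for Lemma \ref{lemma4.1} is false.

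Your account of where the exponent $1/k^2$ comes from is also wrong. It has nothing to do with Hua-type differencing or a factor $2^{k-1}$; moreover $K=2t_k(t_k+2)$ bears no such relation to $k^2$. In the paper, the exponent $1/k$ appears in the pointwise \emph{major-arc} estimate for $\Psi$ (a single-variable bound, not a moment bound), and it becomes $1/k^2$ through the Hölder step $E_1\ll(\log x)E_2^{1-1/k}(yE_3)^{1/k}$, since $E_3\ll y^{\eps-k\sig}(MN)^{k\sig}+(\log x)^2(q+x^{k-1}y|q\alp-a|)^{-1/k}$. The substantive content that your proposal omits entirely is the estimation of $E_3$: one applies Dirichlet's theorem to $m^k\alp$ to obtain a rational approximation $c/s$, uses the first inequality in (\ref{4.w0}) to show $|rn^kc-sb|<1$ (forcing $r=s/(s,n^k)$ and allowing the $n$-sum to be evaluated via $\sum_n(s,n^k)/n\ll s^{1-1/k+\eps}\log N$ through the multiplicative function $v_k$), and then uses the second inequality in (\ref{4.w0}) together with (\ref{4.wa}) to force $|sm^ka-qc|<1$ and hence $s=q/(q,m^k)$, so that the $m$-sum collapses to $q^{\eps-1/k}\log M$. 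None of this machinery is present in your sketch, and without it there is no way to produce the second term $y^{1+\eps}(q+x^{k-1}y|q\alp-a|)^{-1/k^2}$ in the conclusion.
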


\begin{proof} Write
$$\Psi(\tet)=\sum_{|l-x/(mn)|\le y/(mn)}c_le(l^k\tet).$$
Put $\calQ=(y/(mn))^{1/3}$, and define $\grW$ to be the union of the arcs
$$\grW(q,a)=\{ \alp\in [0,1):|\tet-a/q|\le \calQ (x/(mn))^{2-k}(y/(mn))^{-2}\},$$
with $0\le a\le q\le \calQ$ and $(a,q)=1$. Also, put $\grw=[0,1)\setminus \grW$. Then it follows from 
the argument underlying the proof of \cite[equation (3.5)]{Dae2010a} that
\begin{equation}\label{4.b}
\sup_{\tet\in \grw}|\Psi(\tet)|\ll (y/(mn))\calQ^{\eps-1/(2t_k)}\ll (y/(mn))^{1-\sig_k}.
\end{equation}
Meanwhile, from \cite[equations (5.1)-(5.5) and \S6]{Dae2010a}, we see that when 
$\tet\in \grW(q,a)\subseteq \grW$, one has
\begin{equation}\label{4.c}
\Psi(\tet)\ll \frac{y/(mn)}{\left( q+(x/(mn))^{k-1}(y/(mn))|q\tet -a|\right)^{1/k}}+\Del,
\end{equation}
where
$$\Del\ll \calQ^{1/2+\eps}\left( 1+\frac{\calQ(x/(mn))^k}{(x/(mn))^{k-2}(y/(mn))^2}\right)^{1/2}
\ll \calQ^{1+\eps}x/y.$$
Provided that $(mn)^{2/3-\sig_k}\le x^{-1}y^{5/3-\sig_k}$, one discerns that
$$\Del\ll (y/(mn))^{1/3+\eps}x/y\le (y/(mn))^{1-\sig_k+\eps}.$$
In combination with (\ref{4.c}), therefore, we conclude that whenever $0<\sig\le \sig_k$, then
\begin{equation}\label{4.d}
\Psi(\tet)\ll (y/(mn))^{1-\sig+\eps}+\frac{y(mn)^{-1}}
{\left( q+(x/(mn))^{k-1}(y/(mn))|q\tet -a|\right)^{1/k}}.
\end{equation}

\par For each integer $m$ with $1\le m\le M$, denote by $\calN$ the set of natural numbers $n$ 
with $1\le n\le N$ for which there exist integers $b$ and $r$ with $(b,r)=1$,
\begin{equation}\label{4.w1}
1\le r\le \tfrac{1}{3}\left( \frac{y}{mn}\right)^{k\sig},\quad |r(mn)^k\alp-b|\le \tfrac{1}{2}
\left( \frac{x}{mn}\right)^{2-k}\left( \frac{y}{mn}\right)^{k\sig-2}.
\end{equation}
Now consider the situation with $\tet=(mn)^k\alp$. By Dirichlet's theorem on Diophantine approximation, 
there exist integers $b$ and $r$ with $(b,r)=1$ and
$$1\le r\le \calQ^{-1}\left( \frac{x}{mn}\right)^{k-2}\left( \frac{y}{mn}\right)^2$$
such that
$$|(mn)^k\alp-b/r|\le \calQ \left( \frac{x}{mn}\right)^{2-k}\left( \frac{y}{mn}\right)^{-2}.$$
If one has $1\le r\le \calQ$, then it follows that $(mn)^k\alp\in \grW$, and then from (\ref{4.d}) one sees 
that
$$\Psi((mn)^k\alp)\ll \left( \frac{y}{mn}\right)^{1-\sig+\eps}+
\frac{y(mn)^{-1}}{\left( r+(x/(mn))^{k-1}(y/(mn))|(mn)^kr\alp -b|\right)^{1/k}}.$$
Observe here that when $n\not\in \calN$, it follows that either $r>\tfrac{1}{3}(y/(mn))^{k\sig}$, or 
else that
$$|r(mn)^k\alp-b|>\tfrac{1}{2}\left( \frac{x}{mn}\right)^{2-k}\left( \frac{y}{mn}\right)^{k\sig-2},$$
whence
$$\Psi((mn)^k\alp)\ll \left( \frac{y}{mn}\right)^{1-\sig+\eps}.$$
If, on the other hand, one has $r>\calQ$, then one discerns that $(mn)^k\alp\in \grw$, and so we deduce 
from (\ref{4.b}) that
$$\Psi((mn)^k\alp)\ll (y/(mn))^{1-\sig+\eps}.$$
\par

The discussion of the previous paragraph supplies the estimate
\begin{equation}\label{4.w2}
\sum_{1\le m\le M}a_m\sum_{1\le n\le N}b_n\sum_{|x-lmn|\le y}c_l e\left((lmn)^k\alp\right) \ll 
E_0+E_1,
\end{equation}
where
$$E_0=\sum_{1\le m\le M}\sum_{1\le n\le N}|a_mb_n|\left( \frac{y}{mn}\right)^{1-\sig+\eps}$$
and
$$E_1=\sum_{1\le m\le M}\sum_{n\in \calN}|a_mb_n|\frac{y(mn)^{-1}\log x}
{\left( r+(x/(mn))^{k-1}(y/(mn))|(mn)^kr\alp -b|\right)^{1/k}}.$$
Here we note that in case $c_l=\log l$ for all $l$, then the desired conclusion follows in like manner from 
the aforementioned work of Daemen by partial summation. In view of our hypotheses concerning $(a_m)$ 
and $(b_n)$, one has
\begin{equation}\label{4.w3}
E_0\ll y^{1-\sig+\eps}(MN)^\sig .
\end{equation}
Also, it is evident from an application of H\"older's inequality that
\begin{equation}\label{4.w4}
E_1\ll (\log x)E_2^{1-1/k}(yE_3)^{1/k},
\end{equation}
where
\begin{equation}\label{4.w5}
E_2=\sum_{1\le m\le M}\sum_{n\in \calN}|a_mb_n|^{k/(k-1)}y/(mn)\ll y^{1+\eps}
\end{equation}
and
\begin{equation}\label{4.w6}
E_3=\sum_{1\le m\le M}\sum_{n\in \calN}\frac{(mn)^{-1}}{r+(x/(mn))^{k-1}(y/(mn))|(mn)^kr\alp -b|}.
\end{equation}

\par For each integer $m$ with $1\le m\le M$, we apply Dirichlet's approximation theorem to deduce the 
existence of $c\in \dbZ$ and $s\in \dbN$ with $(c,s)=1$,
\begin{equation}\label{4.w7}
1\le s\le \left( \frac{x}{mN}\right)^{k-2}\left( \frac{y}{mN}\right)^{2-k\sig}\quad \text{and}
\quad |sm^k\alp-c|\le \left( \frac{x}{mN}\right)^{2-k}\left( \frac{y}{mN}\right)^{k\sig -2}.
\end{equation}
By combining (\ref{4.w1}) and (\ref{4.w7}), we obtain the bound
\begin{align*}
|rn^kc-sb|&\le rn^k\left( \frac{x}{mN}\right)^{2-k}\left( \frac{y}{mN}\right)^{k\sig-2}+
\tfrac{1}{2}s\left( \frac{x}{mn}\right)^{2-k}\left( \frac{y}{mn}\right)^{k\sig -2}\\
&\le \tfrac{1}{2}+\tfrac{1}{3}\left( \frac{x}{m}\right)^{2-k}\left( \frac{y}{m}\right)^{2k\sig-2}
(nN)^{k-k\sig}.
\end{align*}
Then it follows from (\ref{4.w0}) that $|rn^kc-sb|<1$, whence
$$\frac{b}{rn^k}=\frac{c}{s}\quad \text{and}\quad r=\frac{s}{(s,n^k)}.$$
We therefore deduce from (\ref{4.w6}) that
$$E_3=\sum_{1\le m\le M}\frac{m^{-1}}{s+(x/m)^{k-1}(y/m)|m^ks\alp-c|}
\sum_{n\in \calN}\frac{(s,n^k)}{n}.$$
Observe next that
$$\sum_{1\le n\le N}\frac{(s,n^k)}{n}\le \sum_{d|s}
\sum_{\substack{1\le n\le N\\ n^k\equiv 0\mmod{d}}}\frac{d}{n}\ll \sum_{d|s}dv_k(d)^{-1}\log N,$$
in which $v_k(w)$ is the multiplicative function of $w$ defined by taking
$$v_k(p^{uk+v})=p^{u+1},$$
for prime numbers $p$, when $u\ge 0$ and $1\le v\le k$. Hence we deduce that
$$\sum_{1\le n\le N}\frac{(s,n^k)}{n}\ll (\log N)\sum_{d|s}d^{1-1/k}\ll s^{1-1/k+\eps}\log N,$$
whence
\begin{equation}\label{4.w8}
E_3\ll \sum_{1\le m\le M}\frac{s^{\eps-1/k}\log N}{m\left( 1+(x/m)^{k-1}(y/m)|m^k\alp -c/s|\right)}.
\end{equation} 

\par We next define $\calM$ to be the set of natural numbers $m$ with $1\le m\le M$ such that the 
integers $c$ and $s$ defined in (\ref{4.w7}) satisfy
\begin{equation}\label{4.w9}
1\le s\le \tfrac{1}{3}\left( \frac{y}{MN}\right)^{k^2\sig},\quad |sm^k\alp -c|\le \tfrac{1}{3}
\left( \frac{y}{MN}\right)^{k^2\sig}\left( \frac{x}{m}\right)^{1-k}\left( \frac{y}{m}\right)^{-1}.
\end{equation}
In view of (\ref{4.w8}), we find that
\begin{equation}\label{4.w10}
E_3\ll \sum_{m\in \calM}\frac{s^{\eps-1/k}\log N}{m\left( 1+(x/m)^{k-1}(y/m)|m^k\alp-c/s|\right)}
+y^{\eps-k\sig}(MN)^{k\sig}.
\end{equation}
When $a$ and $q$ satisfy (\ref{4.wa}) and $m\in \calM$, it follows from (\ref{4.w9}) that
\begin{align*}
|sm^ka-qc|&\le sm^k(x^{k-2}y^2)^{-1/2}+\tfrac{1}{3}q\left( \frac{y}{MN}\right)^{k^2\sig}
\left( \frac{x}{m}\right)^{1-k}\left( \frac{y}{m}\right)^{-1}\\
&\le \tfrac{1}{3}\left( x^{1-k/2}y^{k^2\sig-1}+x^{-k/2}y^{k^2\sig}\right) M^{k-k^2\sig}
N^{-k^2\sig}.
\end{align*}
The second condition of (\ref{4.w0}) therefore reveals that
$$|sm^ka-qc|<1,$$
whence
$$\frac{c}{sm^k}=\frac{a}{q}\quad \text{and}\quad s=\frac{q}{(q,m^k)}.$$

\par We therefore deduce that
\begin{align*}
\sum_{m\in \calM}&\frac{s^{\eps-1/k}\log N}{m\left( 1+(x/m)^{k-1}(y/m)|m^k\alp-c/s|\right)}\\
&\le \frac{q^\eps \log N}{1+x^{k-1}y|\alp-a/q|}\sum_{1\le m\le M}\frac{(q/(q,m^k))^{-1/k}}{m}.
\end{align*}
In this case, we observe as above that
\begin{align*}
\sum_{1\le m\le M}\frac{(q/(q,m^k))^{-1/k}}{m}&\le \sum_{d|q}
\sum_{\substack{1\le m\le M\\ m^k\equiv 0\mmod{d}}}\frac{(q/d)^{-1/k}}{m}\\
&\ll \sum_{d|q}(q/d)^{-1/k}v_k(d)^{-1}\log M \\
&\ll (\log M)q^{-1/k}\sum_{d|q}1\ll q^{\eps-1/k}\log M.
\end{align*}
We therefore infer that
$$\sum_{m\in \calM}\frac{s^{\eps-1/k}\log N}{m\left( 1+(x/m)^{k-1}(y/m)|m^k\alp-c/s|\right)}
\ll \frac{q^{\eps-1/k}(\log x)^2}{1+x^{k-1}y|\alp-a/q|},$$
whence by (\ref{4.w10}) we find that
$$E_3\ll y^{\eps-k\sig}(MN)^{k\sig}+\frac{(\log x)^2}{(q+x^{k-1}y|q\alp -a|)^{1/k}}.$$

\par On substituting this last estimate together with (\ref{4.w5}) into (\ref{4.w4}), we deduce that
$$E_1\ll y^{1-\sig +\eps}(MN)^\sig +\frac{y^{1+\eps}}{(q+x^{k-1}y|q\alp-a|)^{1/k^2}}.$$
The conclusion of the lemma now follows by substituting this estimate along with (\ref{4.w3}) into 
(\ref{4.w2}).
\end{proof}

We now turn to the problem of estimating the exponential sum $f(\alp)$ defined in (\ref{2.1}). We 
suppose throughout that $\tfrac{5}{6}<\tet\le 1$, $\sig=\sig_k$ and $Y=X^\tet$, and further that 
$a\in \dbZ$ and $q\in \dbN$ satisfy $(a,q)=1$ and $|\alp-a/q|\le q^{-2}$. Let $\Lam(n)$ denote the von 
Mangoldt function, defined by
$$\Lam(n)=\begin{cases} \log p,&\text{when $n=p^l$ for some prime $p$ and natural number $l$,}\\
0,&\text{otherwise,}\end{cases}$$
and let $\mu (n)$ denote the M\"obius function. Suppose that $p$ is a prime number and $l\ge 2$. Then 
whenever $|p^l-X|\le Y$, one has $|p-X^{1/l}|\le YX^{-1+1/l}$, and hence
\begin{equation}\label{4.22}
f(\alp)=\sum_{|n-X|\le Y}\Lam(n)e(n^k\alp)+O(YX^{-1/2}).
\end{equation}

\par We put $U=V=4X^{2-2\tet}$, and apply Vaughan's identity (see \cite{Vau1980}) in the 
shape
$$\Lam(n)=\sum_{\substack{md=n\\ 1\le d\le V}}\mu(d)\log m-\sum_{\substack{lmd=n\\ 1\le d\le V\\ 
1\le m\le U}}\mu(d)\Lam(m)-\sum_{\substack{lmd=n\\ 1\le d\le V\\ m>U\\ ld>V}}\mu(d)\Lam(m).$$
Thus we deduce from (\ref{4.22}) that
$$f(\alp)=S_1-S_2-S_3+O(YX^{-1/2}),$$
where
\begin{align*}
S_1&=\sum_{1\le d\le V}\mu(d)\sum_{|m-X/d|\le Y/d}(\log m)e\left( (md)^k\alp\right) ,\\
S_2&=\sum_{1\le v\le UV}\lam_0(v)\sum_{|l-X/v|\le Y/v}e\left( (lv)^k\alp\right) ,\\
S_3&=\sum_{V<u\le (X+Y)/U}\lam_1(u)\sum_{\substack{|m-X/u|\le Y/u\\ m>U}}\Lam(m)
e\left( (mu)^k\alp\right) ,
\end{align*}
in which
$$\lam_0(v)=\sum_{\substack{md=v\\ 1\le d\le V\\ 1\le m\le U}}\mu(d)\Lam(m)\quad \text{and}\quad 
\lam_1(u)=\sum_{\substack{d|u\\ 1\le d\le V}}\mu(d).$$

\par We begin by estimating the sum $S_3$. Note here that when $\tet>\tfrac{5}{6}$ and $Y=X^\tet$, 
then our choices for $U$ and $V$ ensure that
$$X^2/Y^2<V<(X+Y)/U<Y^2/X.$$
We cover the interval $|n-X|\le Y$ by intervals of the shape $[x,x+y]$ with $|x-X|\le Y$ and 
$x^\tet\le y\le x$. On noting that $|\lam_1(u)|\le \tau(u)$, we find that we may apply Lemma 
\ref{lemma4.1} with $t=t_k$ and $w=\lceil \tfrac{1}{2}(t_k+2)\rceil$. Thus, on interchanging the order 
of summation, we may divide the summation over $u$ into dyadic intervals to deduce that
\begin{align}
S_3&\ll (\log X)\max_{V\le N\le (X+Y)/U}X^\eps Y\left( \frac{1}{q}+\frac{X}{YN^k}+
\frac{N^k}{X^{k-1}Y}+\frac{q}{X^{k-2}Y^2}\right)^{1/K}\notag \\
&\ll X^\eps Y\left( q^{-1}+X^{-1/2}+qX^{2-k}Y^{-2}\right)^{1/K},\label{4.z1}
\end{align}
where $K$ is defined as in (\ref{2.1}).\par

Next we estimate $S_2$. Write
$$S_4(Z,W)=\sum_{Z<v\le W}\lam_0(v)\sum_{|l-X/v|\le Y/v}e((lv)^k\alp).$$
Then we find that
\begin{equation}\label{4.z2}
S_2=S_4(0,V)+S_4(V,UV).
\end{equation}
Note that
$$X^2/Y^2<V<UV<16Y^2/X.$$
In view of the bound $|\lam_0(v)|\le \log v$, we may again divide the summation over $v$ into dyadic 
intervals to deduce from Lemma \ref{lemma4.1} that
\begin{equation}\label{4.z3}
S_4(V,UV)\ll X^\eps Y(q^{-1}+X^{-1/2}+qX^{2-k}Y^{-2})^{1/K}.
\end{equation}
In order to estimate $S_4(0,V)$, we begin by applying Dirichlet's theorem on Diophantine approximation 
to obtain integers $b$ and $r$ with $(b,r)=1$,
$$1\le r\le (X^{k-2}Y^2)^{1/2}\quad \text{and}\quad |r\alp-b|\le (X^{k-2}Y^2)^{-1/2}.$$
Next, noting that
$$V^{2-2k\sig}\le X^{1-2/k}Y^{2/k-2k\sig}\quad \text{and}\quad 
V^{1-2\sig}<X^{1-2/k}Y^{2/k-2\sig},$$
and further that
$$V^{2/3-\sig}\le X^{-1}Y^{5/3-\sig},$$
an application of Lemma \ref{lemma4.2a} with $N=1$ and $b_1=1$ yields
\begin{equation}\label{4.z3a}
S_4(0,V)\ll Y^{1-\sig+\eps}V^\sig +Y^{1+\eps}(r+X^{k-1}Y|r\alp-b|)^{-1/k^2}.
\end{equation}
Note here that if one were to have both $r\le \tfrac{1}{2}q$ and $|r\alp-b|\le q^{-1}Y/X$, then it follows 
from the triangle inequality that
$$qr\left| \frac{a}{q}-\frac{b}{r}\right| \le rq^{-1}+Y/X<1,$$
so that $a/q=b/r$, and indeed $q=r$ and $a=b$. In such circumstances, therefore, one has
\begin{align}
S_4(0,V)&\ll Y^{1-\sig+\eps}V^\sig +Y^{1+\eps}\left( q+X^{k-1}Y|q\alp-a|\right)^{-1/k^2}\notag \\
&\ll Y^{1+\eps}\left( q^{-1}+X^{-1/2}+qX^{2-k}Y^{-2}\right)^{1/K}.\label{4.z4}
\end{align}
Meanwhile, when one has either $r>\tfrac{1}{2}q$ or $|r\alp -b|>q^{-1}Y/X$, the same conclusion 
follows directly from (\ref{4.z3a}). Thus, by combining (\ref{4.z3}) and (\ref{4.z4}), we deduce from 
(\ref{4.z2}) that
\begin{equation}\label{4.z5}
S_2\ll X^\eps Y(q^{-1}+X^{-1/2}+qX^{2-k}Y^{-2})^{1/K}.
\end{equation}

\par Finally, in order to estimate $S_1$, we apply Lemma \ref{lemma4.2a} directly, proceeding as in the 
treatment of $S_4(0,V)$. Thus we again obtain the bound
\begin{equation}\label{4.z6}
S_1\ll X^\eps Y(q^{-1}+X^{-1/2}+qX^{2-k}Y^{-2})^{1/K}.
\end{equation}
Finally, by combining (\ref{4.z1}), (\ref{4.z5}) and (\ref{4.z6}), we obtain the following conclusion.

\begin{lemma}\label{lemma4.2}
Let $\tet$ be a real number with $\tfrac{5}{6}<\tet<1$, and suppose that $X$ and $Y$ are real numbers 
with $X^{\tet}\le Y\le X$. Then whenever $a\in \dbZ$ and $q\in \dbN$ satisfy $(a,q)=1$ and 
$|\alp-a/q|\le q^{-2}$, one has
$$f(\alp)\ll X^\eps Y\left( q^{-1}+X^{-1/2}+qY^{-2}X^{2-k}\right)^{1/K},$$
where $K$ is defined as in {\rm (\ref{2.0})}.
\end{lemma}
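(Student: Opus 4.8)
The plan is to estimate $f(\alp)$ via Vaughan's identity, and in fact the bounds needed for the three resulting sums have already been assembled above. First I would pass from the sum over primes in (\ref{2.1}) to a sum weighted by the von Mangoldt function, at the cost of the error $O(YX^{-1/2})$ recorded in (\ref{4.22}), since a prime power $p^l$ with $l\ge 2$ satisfying $|p^l-X|\le Y$ forces $p$ into an interval of length $O(YX^{-1/2})$. With the choice $U=V=4X^{2-2\tet}$, Vaughan's identity then writes $f(\alp)=S_1-S_2-S_3+O(YX^{-1/2})$, where $S_1$ is a Type~I sum carrying the smooth weight $\log m$, the sum $S_2$ decomposes according to the divisor cut-off as $S_4(0,V)+S_4(V,UV)$, and $S_3$ is a genuine bilinear (Type~II) sum over $V<u\le (X+Y)/U$ with inner variable $m>U$. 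The purpose of the hypothesis $\tet>\tfrac{5}{6}$ is to force the chain $X^2/Y^2<V<(X+Y)/U<16Y^2/X$, placing the bilinear ranges precisely inside the admissible window $cx^2/y^2\le N\le c^{-1}y^2/x$ of Lemma~\ref{lemma4.1}.

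For $S_3$ I would cover $|n-X|\le Y$ by short subintervals $[x,x+y]$ with $x^\tet\le y\le x$, split the outer summation over $u$ into dyadic blocks, use $|\lam_1(u)|\le \tau(u)\ll u^\eps$, and apply Lemma~\ref{lemma4.1} with $t=t_k$ and $w=\lceil\tfrac{1}{2}(t_k+2)\rceil$; since $4tw=K$ — which is exactly why $K$ is defined as in (\ref{2.0}) — this gives the bound (\ref{4.z1}). The piece $S_4(V,UV)$ of $S_2$ is handled in the same way, using $|\lam_0(v)|\le\log v$. The Type~I pieces $S_4(0,V)$ and $S_1$ I would instead treat with Lemma~\ref{lemma4.2a} taken with $N=1$, after verifying the size conditions $V^{2/3-\sig}\le X^{-1}Y^{5/3-\sig}$, $V^{1-2\sig}<X^{1-2/k}Y^{2/k-2\sig}$ and $V^{2-2k\sig}\le X^{1-2/k}Y^{2/k-2k\sig}$ with $\sig=\sig_k$, which again follow from $\tet>\tfrac{5}{6}$ together with the smallness of $\sig_k=(8t_k)^{-1}$.

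The one delicate point — and the step I expect to be the main obstacle — is that Lemma~\ref{lemma4.2a} delivers a bound phrased through an \emph{auxiliary} Dirichlet approximant $b/r$ with $1\le r\le(X^{k-2}Y^2)^{1/2}$, whereas the conclusion must be expressed through the \emph{given} pair $(a,q)$. Following the passage leading to (\ref{4.z4}), I would observe that if simultaneously $r\le\tfrac{1}{2}q$ and $|r\alp-b|\le q^{-1}Y/X$, then $qr\,|a/q-b/r|<1$, so that $(b,r)=(a,q)$ and the bound rewrites in terms of $q+X^{k-1}Y|q\alp-a|$; if either inequality fails, the rewriting is immediate from the shape of the bound in Lemma~\ref{lemma4.2a}. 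In all cases the outcome is dominated by $X^\eps Y(q^{-1}+X^{-1/2}+qX^{2-k}Y^{-2})^{1/K}$, which is (\ref{4.z6}) for $S_1$ and (\ref{4.z5}) for $S_2$. Finally I would combine (\ref{4.z1}), (\ref{4.z5}) and (\ref{4.z6}) through $f(\alp)=S_1-S_2-S_3+O(YX^{-1/2})$; since $K\ge1$ one has $YX^{-1/2}\le Y(X^{-1/2})^{1/K}$, which is already subsumed in the claimed bound, and the lemma follows.
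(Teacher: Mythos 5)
Your proposal reproduces the paper's argument faithfully: the same passage to von Mangoldt weights via (\ref{4.22}), the same Vaughan decomposition with $U=V=4X^{2-2\tet}$, the same dyadic application of Lemma \ref{lemma4.1} with $t=t_k$ and $w=\lceil\tfrac{1}{2}(t_k+2)\rceil$ (and the identity $4tw=K$ is correctly observed), the same treatment of $S_1$ and $S_4(0,V)$ via Lemma \ref{lemma4.2a} with $N=1$, and the same rational-approximation rigidity argument to recast the bound in terms of $(a,q)$. No gaps; this is essentially the proof given in the paper.
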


Proposition \ref{prop2.3} follows from this lemma by applying a standard transference principle (see 
\cite[Lemma 14.1]{Woo2014a}) to the conclusion of Lemma \ref{lemma4.2}. We note that Kumchev 
\cite[Theorem 1.2]{Kum2013} has stronger conclusions than those available via Lemma \ref{lemma4.2} in 
circumstances wherein $k\ge 3$ and
$$X^{(2k+2)/(2k+3)}<Y\le X.$$
A key feature of Lemma \ref{lemma4.2}, however, is the validity of its estimates for values of $Y$ almost 
as small as $X^{5/6}$.

\section{Estimates of Weyl type, II: $k=2$ and $3$}
Lemma \ref{lemma4.2} would suffice to obtain viable minor arc estimates for $f(\alp)$, but only in 
circumstances wherein $Y>X^{5/6}$. By making use of the earlier literature on the subject, we are able to 
obtain an estimate of the shape (\ref{2.9}) in the cases $k=2$ and $3$ of use when $Y$ is a somewhat 
smaller power of $X$. This we accomplish by dividing the minor arcs $\mathfrak{m}$ into two parts, and 
treating each part in turn. Let
\begin{equation}\label{a.1}
Q_0=X^{1-k}Y^{2k-1}P^{-1},
\end{equation}
and note that $Q_0<Q$. Denote by $\grm_1$ the union of the arcs
$$\{ \alp\in [0,1):|q\alp-a|\le Q_0^{-1}\},$$
with $(a,q)=1$, $0\le a\le q$ and $P<q\le Q_0$, and by $\grm_2$ the union of the arcs
$$\{ \alp\in [0,1):Q^{-1}<|q\alp-a|\le Q_0^{-1}\},$$
with $(a,q)=1$ and $0\le a\le q\le P$. By Dirichlet's theorem on Diophantine approximation, each real 
number $\alpha\in\mathfrak{m}$ can be written in the shape $\alpha=\lam +a/q$, with $(a,q)=1$, 
$0\le a\le q\le Q_0$ and $|q\lambda|<Q_0^{-1}$. In view of the definition of the minor arcs $\grm$, 
we have either
\begin{equation}\label{a.2}
P<q\le Q_0 \quad \text{and}\quad |q\lambda|\le Q_0^{-1},
\end{equation}
or else
\begin{equation}\label{a.3}
1\le q\le P \quad \text{and}\quad Q^{-1}<|q\lambda|\le Q_0^{-1}.
\end{equation}
Thus $\mathfrak{m}=\mathfrak{m}_1\cup\mathfrak{m}_2$.\par

We obtain a bound for $f(\alp)$ when $\alp\in \grm_1$ by means of the following lemma due to 
Tang \cite{Tan2011}.

\begin{lemma}\label{lemmaa.1}
Let $k$ be an integer with $k\ge 2$, and put $\ome=2^{k-1}$. Suppose that $y>x^{1/2}$, and that 
$\alp\in \dbR$ satisfies the property that there exist $a\in \dbZ$ and $q\in \dbN$ with
$$1\le q\le x^{\tfrac{k-1}{\ome-1}}y^{\tfrac{k(\ome-2)+1}{\ome-1}}\quad \text{and}\quad 
|\alp-a/q|\le q^{-2}.$$
Then for any $\eps>0$, one has
$$\sum_{x\le n\le x+y}\Lambda(n)e(n^k\alpha)\ll y^{1+\eps}\Biggl(
\frac{1}{q}+\frac{x^{1/2}}{y}+\frac{x^{\tfrac{\ome^2}{\ome +1}}}{y^\ome}+
\frac{x^{\tfrac{(k-1)(\ome+1)-1}{\ome+1}}}{y^{2k-2}}++\frac{qx^{k-1}}{y^{2k-1}}
\Biggr)^{1/\ome^2}.$$
\end{lemma}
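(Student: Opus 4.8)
The plan is to establish Lemma \ref{lemmaa.1} by a Vaughan-identity argument tailored to the short interval $[x,x+y]$, exactly paralleling the classical treatment of $\sum_{n\le x}\Lambda(n)e(n^k\alpha)$ but with each occurring Weyl sum and bilinear sum estimated over the restricted range. First I would apply Vaughan's identity to $\Lambda(n)$ with parameters $U=V=y^{1/2}$ (or a similar choice of size roughly $y^{1/2}$), splitting $\sum_{x\le n\le x+y}\Lambda(n)e(n^k\alpha)$ into a "Type I" sum $S_1$, a further Type I piece $S_2$ coming from the $\mu\cdot\Lambda$ term, and a "Type II" (bilinear) sum $S_3$ in which both variables are confined to intermediate ranges $V<u$, $U<m$. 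The overall bound will then follow by showing each of $S_1$, $S_2$, $S_3$ is $O\bigl(y^{1+\eps}(\Delta)^{1/\omega^2}\bigr)$, where $\Delta$ denotes the five-term quantity inside the bracket in the statement.

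For the Type I sums, after interchanging summation one is left with an inner sum over a short interval in the linear variable, to which one applies the exponential-sum estimate for $\sum_{|\ell - x/v|\le y/v}e((\ell v)^k\alpha)$; this is handled by a Weyl-differencing argument for $k$th powers in short intervals (as in Daemen's work and its antecedents), producing a bound in terms of $q$ and $|q\alpha-a|$ of the shape $(y/v)^{1+\eps}(\ldots)^{1/\omega}$, with $\omega=2^{k-1}$ the Weyl exponent; summing trivially over $v\le V$ and using $V=y^{1/2}$ together with the hypothesis $y>x^{1/2}$ gives the claimed bound. For the Type II sum $S_3$, the standard device is to smooth out one variable, reduce to counting via an application of Weyl differencing $k-1$ times in the long variable, and control the resulting divisor-type sums; here the key inputs are Cauchy--Schwarz to separate the coefficients, the short-interval Weyl inequality applied with the differenced $k$th power, and a careful analysis of the spacing of the points $m^k\alpha$ via \cite[Lemma 2.2]{Vau1997} (a sum of $\min\{N,\|\cdot\|^{-1}\}$). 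The ranges $U,V$ and the constraint $y>x^{1/2}$ are chosen precisely so that the pivotal variable is long enough for Weyl differencing to gain, and the term $x^{\omega^2/(\omega+1)}y^{-\omega}$ together with $x^{((k-1)(\omega+1)-1)/(\omega+1)}y^{-(2k-2)}$ record the losses incurred in the two stages of the differencing process.

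The main obstacle I expect is the bookkeeping in the Type II estimate: one must track exactly how the multiple Weyl differencing in the long variable interacts with the shortness of the interval (so that each differencing step genuinely reduces the modulus of the $k$th power being summed), and then optimise the division point so that all five error terms balance against the target $y^{1+\eps}\Delta^{1/\omega^2}$. A secondary delicate point is the passage between the Diophantine approximation valid on the original range $q\le x^{(k-1)/(\omega-1)}y^{(k(\omega-2)+1)/(\omega-1)}$ and the approximations produced at the various stages of the argument — exactly the kind of triangle-inequality "same $a/q$" argument used above in the treatment of $S_4(0,V)$, where one shows that two competing rational approximations must coincide. Since Lemma \ref{lemmaa.1} is attributed to Tang \cite{Tan2011}, in the write-up I would simply cite that source for the full details, remarking that the hypothesis $y>x^{1/2}$ and the precise exponents are as recorded there, and that the slightly flexible choice of $U,V$ does not affect the conclusion.
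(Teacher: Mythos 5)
The paper's own proof of Lemma \ref{lemmaa.1} is a one-line citation: ``This is \cite[Lemma 2]{Tan2011}.'' Your proposal arrives at the same endpoint (cite Tang), and the Vaughan-identity plus short-interval Weyl-differencing sketch you give is a reasonable account of the kind of argument underlying Tang's result, so this is essentially the same approach as the paper.
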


\begin{proof} This is \cite[Lemma 2]{Tan2011}.
\end{proof}

Recall that we may suppose that $Y=X^\tet$ with $\tet_k+4K\del<\tet<1$. Then the definition (\ref{a.1}) 
implies that when $k=2$, we have
$$Q_0=X^{-1}Y^3P^{-1}<XY=X^{\tfrac{k-1}{\ome-1}}Y^{\tfrac{k(\ome-2)+1}{\ome-1}}.$$
Meanwhile, when $k=3$, then instead
$$Q_0=X^{-2}Y^5P^{-1}<X^{2/3}Y^{7/3}=X^{\tfrac{k-1}{\ome-1}}
Y^{\tfrac{k(\ome-2)+1}{\ome-1}}.$$
Then on recalling (\ref{2.3}), (\ref{a.1}) and (\ref{a.2}), we may apply Lemma \ref{lemmaa.1} to show 
that whenever $\alpha\in\mathfrak{m}_1$, then
\begin{equation}\label{a.5}
\sum_{|n-X|\le Y}\Lambda(n)e(n^k\alpha)\ll  X^\eps YP^{-1/\ome^2}\ll YX^{\eps-2\delta}.
\end{equation}

When $\alp\in \mathfrak{m}_2$, meanwhile, we bound $f(\alp)$ via the following estimate of 
Liu, L\"{u} and Zhan \cite{LLZ2006}.

\begin{lemma}\label{lemmaa.2}
Let $k$ be an integer with $k\ge 1$, and suppose that $2\le y\le x$. Let $\alp\in \dbR$, suppose that 
$a\in \dbZ$ and $q\in \dbN$ satisfy $(a,q)=1$, and define
$$\Xi=x^k|\alp-a/q|+x^2y^{-2}.$$
Then for any $\eps>0$, one has
\begin{align*}
\sum_{x<n\le x+y}&\Lambda(n)e(n^k\alpha)\\
&\ll (qx)^\eps\left(\frac{y(q\Xi)^{1/2}}{x^{1/2}}
+(qx)^{1/2}\Xi^{1/6}+y^{1/2}x^{3/10}+\frac{x^{4/5}}{\Xi^{1/6}}+\frac{x}{(q\Xi)^{1/2}}\right).
\end{align*}
\end{lemma}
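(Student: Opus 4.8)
The quickest route is to quote the relevant result of \cite{LLZ2006} verbatim, as the estimate above is precisely of the form proved there. Should a self-contained derivation be wanted, the structure is as follows. Apply Vaughan's identity to the von Mangoldt function restricted to $(x,x+y]$, with truncation parameters $U,V$ of the shape $x^{a}y^{b}$ to be fixed at the end, decomposing the sum into $O(\log x)$ Type I sums
$$\sum_{d\le V}c_d\sum_{x/d<m\le (x+y)/d}g(m)\,e\bigl((dm)^k\alpha\bigr),$$
with $g(m)\in\{1,\log m\}$ and $|c_d|\le\tau(d)$, together with $O(\log^2 x)$ Type II bilinear sums $\sum_m a_m\sum_n b_n\,e((mn)^k\alpha)$ in which $m$ and $n$ run over nontrivial dyadic ranges and $|a_m|,|b_n|\ll x^\eps$.

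For a Type I sum I would sort $dm$ into residue classes modulo $q$, replace the resulting arithmetic phase by the complete exponential sum $S(q,a)$ (of size $\ll q^{1/2+\eps}$), and use partial summation to reduce to bounding, for dyadic $M\asymp x/d$ and an interval of length $\le y/d$, exponential sums $\sum_{M<m\le M'}e(\beta m^k)$ whose frequency $\beta$ is controlled in terms of $d$ and $\alpha-a/q$. Van der Corput's method applied here --- the first-derivative (Kuzmin--Landau) estimate when $\|\beta M^{k-1}\|$ is large, and the $B$-process together with a further exponent-pair estimate otherwise --- and summation over $d$ and the residues then yields, on balancing, the terms $(qx)^{1/2}\Xi^{1/6}$, $x^{4/5}\Xi^{-1/6}$ and $y^{1/2}x^{3/10}$; the exponent $1/6$ is that of the exponent pair $(1/6,2/3)$, and the last term is what survives in the range where the van der Corput bound degenerates to something near trivial.

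For a Type II sum I would apply Cauchy--Schwarz in $m$, expand the square and interchange summations, reducing to an estimate for
$$\sum_{n_1,n_2}b_{n_1}\overline{b_{n_2}}\sum_m e\bigl(m^k(n_1^k-n_2^k)\alpha\bigr).$$
The diagonal $n_1=n_2$ contributes a term of size the product of the ambient ranges, which after a square root again feeds the term $y^{1/2}x^{3/10}$, while for $n_1\ne n_2$ the inner sum over $m$ is $\ll\min\{M,\|(n_1^k-n_2^k)\alpha\|^{-1}\}$; summing this over $n_1,n_2$ by a standard spacing argument, using $|\alpha-a/q|\le q^{-2}$ and the definition of $\Xi$, produces $y(q\Xi)^{1/2}x^{-1/2}$ and $x(q\Xi)^{-1/2}$.

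It then remains to choose $U$ and $V$ so that the Type I and Type II contributions are in balance; a single choice of the shape $U=V=x^{a}y^{b}$ reproduces all five displayed terms. I expect the main difficulty to lie not in any individual estimate but in the bookkeeping: making the short-interval forms of the van der Corput and Weyl-differencing bounds cohere, controlling the diagonal Type II contribution (which is proportionally larger here than in the classical full-interval setting), and checking that one choice of Vaughan parameters simultaneously delivers all five exponents --- in particular that the sharp exponent pair $(1/6,2/3)$, rather than crude Weyl differencing, is what the $x^{4/5}$ and $x^{3/10}$ terms require.
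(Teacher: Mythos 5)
Your primary route — citing \cite[Theorem 1.1]{LLZ2006} directly — is exactly what the paper does; the lemma is quoted verbatim from that source, and no further argument is supplied. The sketch you append of how such an estimate would be derived (Vaughan decomposition, exponent-pair/van der Corput treatment of the Type I sums, Cauchy--Schwarz and spacing for the Type II sums) is a reasonable description of the shape of Liu, L\"u and Zhan's proof, but it plays no role in the paper itself and need not be checked here.
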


\begin{proof} This is \cite[Theorem 1.1]{LLZ2006}.
\end{proof}

We again recall that $Y=X^\tet$ with $\tet_k+4K\del<\tet<1$. Consequently, under the hypotheses of 
Lemma \ref{lemmaa.2}, when $\alpha\in \mathfrak{m}_2$, we have $\Xi>X^2Y^{-2}$ and
$$X^kQ^{-1}<q\Xi=|q\alp-a|X^k+qX^2Y^{-2}\le X^kQ_0^{-1}+PX^2Y^{-2}<X^kQ_0^{-1}.$$
Thus on recalling (\ref{2.3}), (\ref{a.1}) and (\ref{a.2}), we find via Lemma \ref{lemmaa.2} that 
whenever $\alpha\in\mathfrak{m}_2$, then
\begin{align*}
\sum_{|n-X|\le Y}\Lambda(n)e(n^k\alpha)\ll X^{\eps}&\left( X^{(k-1)/2}YQ_0^{-1/2}
+P^{1/3}X^{(k+3)/6}Q_0^{-1/6}\right.\\
&\, \ \ \left.+Y^{1/2}X^{3/10}+Y^{1/3}X^{7/15}+Q^{1/2}X^{1-k/2}\right),
\end{align*}
whence the estimate (\ref{a.5}) follows also when $\alp\in \grm_2$.
Since $\grm=\grm_1\cup \grm_2$, it follows from (\ref{a.5}) that when $k=2$ and $3$, and 
$\alpha\in \mathfrak{m}$, one has
$$\sum_{|n-X|\le Y}\Lambda(n)e(n^k\alpha)\ll YX^{\eps-2\delta}.$$
This combined with (\ref{4.22}) delivers the bound (\ref{2.9}) for $k=2,3$.

\section{The major arc analysis: preliminaries} We analyse the major arc contribution by applying the 
iterative idea of \cite{Liu2005}. Recall the definition (\ref{2.3}) of $P$ and $Q$, and write
$$N_1=X-Y\quad \text{and}\quad N_2=X+Y.$$
Suppose that $a\in \dbZ$, $q\in \dbN$ and $\lam\in \dbR$ satisfy $(a,q)=1$ and $|\lam|\le (qQ)^{-1}$, 
and consider the value of $f(\alp)$ when $\alp=\lam+a/q$.\par

Write $\chi$ for a typical Dirichlet character modulo $q$, and denote the principal character by 
$\chi_0$. Also, let $\del_\chi$ be $1$ or $0$ according to whether $\chi$ is principal or not. Observe 
that when $1\le q\le P$ and $p$ is a prime number with $N_1\le p\le N_2$, then $(q,p)=1$. Then we 
may rewrite $f(\lam+a/q)$ in the form
\begin{align*}
f(\lam+a/q)&=\sum_{\substack{N_1\le p\le N_2\\ (p,q)=1}}(\log p)e\left( p^k(\lam+a/q)\right) \\
&=\phi(q)^{-1}C(q,a)V(\lam)+\phi(q)^{-1}\sum_{\chi\nmod q}C(\chi,a)W(\chi,\lam),
\end{align*}
where
\begin{align*}
C(\chi,a)&=\sum_{h=1}^q\chibar(h)e(h^ka/q),\quad C(q,a)=C(\chi_0,a),\\
V(\lam)&=\sum_{N_1\le m\le N_2}e(m^k\lam),\\
W(\chi,\lam)&=\sum_{N_1\le p\le N_2}(\log p)\chi(p)e(p^k\lam)-\del_\chi V(\lam).
\end{align*}
Thus we discern that
\begin{equation}\label{5.1}
\int_\grM f(\alp)^se(-n\alp)\d\alp =\sum_{j=0}^s\binom{s}{j}I_j,
\end{equation}
where
\begin{equation}\label{5.2}
I_j=\sum_{1\le q\le P}\sum^q_{\substack{a=1\\ (a,q)=1}}\phi(q)^{-s}C(q,a)^{s-j}e(-na/q)\calI_j(q,a),
\end{equation}
in which
\begin{equation}\label{5.3}
\calI_j(q,a)=\int_{-1/(qQ)}^{1/(qQ)}V(\lam)^{s-j}\biggl( \sum_{\chi\nmod{q}}C(\chi,a)
W(\chi,\lam)\biggr)^je(-n\lam)\d\lam .
\end{equation}

\par We shall find that $I_0$ provides the main contribution on the right hand side of (\ref{5.1}), while 
$I_1,I_2,\ldots ,I_s$ contribute to an error term. We begin by computing the main term $I_0$. Define
\begin{align*}
B(n,q;\chi_1,\ldots ,\chi_s)&=\sum^q_{\substack{a=1\\ (a,q)=1}}C(\chi_1,a)\ldots C(\chi_s,a)e(-na/q),\\
B(n,q)&=B(n,q;\chi_0,\ldots ,\chi_0),
\end{align*}
and
$$\grS(n)=\sum_{q=1}^\infty \phi(q)^{-s}B(n,q).$$
We observe that $\grS(n)$  is the usual singular series in the Waring-Goldbach problem defined in \S2. 
Thus, whenever $s>\max\{3,k(k-1)\}$ and $n\equiv s\mmod{R(k)}$ (and, in the case $k=3$ and $s=7$, 
one has in addition $9\nmid n$), there is a positive number $\eta=\eta(s,k)$ for which 
$1\ll \grS(n)\ll (\log X)^\eta$. An auxiliary estimate facilitates our transition from truncated singular series 
to the completed singular series $\grS(n)$.\par

\begin{lemma}\label{lemma5.1}
Let $q$ and $r_1,\ldots r_s$ be natural numbers, and denote by $r_0$ the least common multiple 
$[r_1,\ldots r_s]$. Let $\chi_j\nmod{r_j}$ be primitive characters for $1\le j\le s$, and write 
$\chi_0\nmod{q}$ for the principal character. Then there is a positive number $c$, independent of 
$q$ and $r_1,\ldots ,r_s$, such that
$$\sum_{\substack{1\le q\le x\\ r_0|q}}\phi(q)^{-s}|B(n,q;\chi_1\chi_0,\ldots ,\chi_s\chi_0)|\ll 
r_0^{1+\eps-s/2}L^c.$$
\end{lemma}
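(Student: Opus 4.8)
The plan is to reduce the bound to a multiplicative estimate for the Gauss-type sums $B(n,q;\chi_1\chi_0,\ldots,\chi_s\chi_0)$ and then sum over the relevant moduli. First I would exploit multiplicativity: since each $\chi_j$ is primitive modulo $r_j$ and $\chi_0$ is principal modulo $q$ with $r_0=[r_1,\ldots,r_s]\mid q$, the sum $B(n,q;\chi_1\chi_0,\ldots,\chi_s\chi_0)$ factors over the prime powers $p^\ell\|q$. At each prime power one obtains a local factor built from the Ramanujan-type sums $C(\chi_j\chi_0,a)$, and the key input is the standard estimate for such sums: when $\chi_j$ has conductor $r_j$, the $p$-part of $C(\chi_j\chi_0,a)$ is $O(p^{\ell/2+\eps})$ at primes $p\nmid r_j$ (a square-root cancellation bound of Gauss-sum / Weil type), while at primes dividing $r_0$ the factor is controlled crudely by $O(p^\ell)$. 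Multiplying the $s$ factors at a given $p^\ell$ and then the error term $e(-na/q)$ (which does not affect absolute values after summing over $a$ modulo $q$), one gets $|B(n,q;\ldots)|\ll q^{1+\eps}\prod_{p\mid q,\,p\mid r_0}p^{c_p}$ for appropriate bounded exponents, together with the divisor-bounded count of $a$. I would isolate the "bad part" $q=r_0 q'$ with $(q',r_0)$ essentially free, so that the contribution of the primes dividing $r_0$ is absorbed into a factor $r_0^{1+\eps}$, while the primes not dividing $r_0$ contribute $O((q')^{1/2+\eps})$ per character, hence $O((q')^{s/2+\eps})$ in $|B|$, but divided by $\phi(q)^{-s}\asymp (r_0 q')^{-s+\eps}$.

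Second I would assemble the sum over $q$ with $r_0\mid q$. Writing $q=r_0 q'$ and using $\phi(q)^{-s}\ll (r_0)^{\eps-s}(q')^{\eps-s}$ together with the bound above for $|B|$, the summand is $\ll r_0^{1+\eps-s}(q')^{\eps-s}\cdot (q')^{s/2}=r_0^{1+\eps-s}(q')^{\eps-s/2}$; here the $r_0$-local factors coming from $|B|$ at primes dividing $r_0$ cancel against part of the $r_0^{-s}$ from $\phi(q)^{-s}$, leaving the claimed $r_0^{1+\eps-s/2}$. Summing $(q')^{\eps-s/2}$ over $1\le q'\le x/r_0$ converges (since $s>t_k\ge 2$ forces $s/2>1$), contributing only an absolute constant, or at worst a power of $L=\log X$ if one is careless at the boundary — which accounts for the factor $L^c$ in the statement. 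Collecting, one arrives at
$$
\sum_{\substack{1\le q\le x\\ r_0\mid q}}\phi(q)^{-s}\bigl|B(n,q;\chi_1\chi_0,\ldots,\chi_s\chi_0)\bigr|\ll r_0^{1+\eps-s/2}L^c,
$$
which is the assertion.

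The main obstacle will be the bookkeeping at the primes dividing $r_0$, i.e. making the local analysis at those primes precise enough that the exponents of $r_0$ coming from the Gauss sums $C(\chi_j\chi_0,a)$ and from the $a$-summation combine correctly with the $-s$ from $\phi(q)^{-s}$ to yield exactly the exponent $1-s/2+\eps$ and no worse. One must be careful that when several of the conductors $r_j$ share a common prime $p$, the local factor is not simply the product of individual square-root bounds but a genuine local count, and the standard device is to bound it trivially by the modulus; the point is that this trivial bound, after division by the $\phi$-factor, still leaves a net saving of $p^{-s/2}$ at each such prime, which is what powers the final $r_0^{1-s/2}$. A secondary technical point is handling non-primitive induced characters and the primes $p\mid (q/r_0)$ but $p\mid r_j$ for some $j$; these are dealt with by the same multiplicative splitting, using the classical bound $C(\chi\chi_0,a)\ll (q,\text{stuff})$ at ramified primes, and contribute only to the $q^\eps$ and $L^c$ slack.
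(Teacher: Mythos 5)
Your general strategy is the right shape --- the quantity $B(n,q;\chi_1\chi_0,\ldots,\chi_s\chi_0)$ does factor multiplicatively over the prime powers dividing $q$, and one does assemble the final estimate by writing $q=r_0q'$ and summing the resulting power of $q'$. But the argument has a genuine gap at the primes dividing $r_0$. You bound the $p$-part of each twisted Gauss sum $C(\chi_j\chi_0,a)$ by the trivial $O(p^\ell)$ at such primes (where $p^\ell$ is the exact power of $p$ dividing $q$) and then claim that, after dividing by $\phi(q)^s$, a net saving of $p^{-s/2}$ survives at each such prime. This cancellation does not materialise. If each of the $s$ Gauss-sum factors is bounded trivially at $p$, the $p$-local contribution to $|B|$ is at most $\phi(p^\ell)\cdot p^{s\ell}$, and dividing by $\phi(p^\ell)^s\asymp p^{s\ell}$ leaves roughly $p^{\ell}$, which \emph{grows} and makes the sum over $q$ with $r_0\mid q$ diverge. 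Even under the most favourable reading --- only one $r_j$ divisible by $p$, with the square-root bound retained for the remaining $s-1$ factors --- the local exponent is $\tfrac{3}{2}-\tfrac{s}{2}$ rather than the required $1-\tfrac{s}{2}$, so one recovers only $r_0^{3/2-s/2+\eps}$ rather than $r_0^{1+\eps-s/2}$, and if several of the $r_j$ share the prime $p$ the argument breaks down altogether.

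The fix, and the real content of the lemma, is that the square-root cancellation already holds at \emph{all} primes: for $\chi$ primitive modulo $r$ with $r\mid q$ and $(a,q)=1$, one has a uniform Weil-type estimate $|C(\chi\chi_0,a)|\ll q^{1/2+\eps}$ (with implied constant depending on $k$), including at primes dividing $r$. With this in hand the computation is painless: $\phi(q)^{-s}|B(n,q;\chi_1\chi_0,\ldots,\chi_s\chi_0)|\ll q^{1-s/2+\eps}$, and writing $q=r_0q'$ and summing $(q')^{1-s/2+\eps}$ over $q'\ge 1$ (convergent since $s\ge 5$ in the context at hand) yields the stated $r_0^{1+\eps-s/2}L^c$. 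Establishing this uniform square-root bound at ramified primes is exactly what the paper delegates to \cite[Lemma 6.7(a)]{LL1993}; it cannot be recovered by combining the unramified square-root bound with a putative cancellation against $\phi(q)^{-s}$ at the ramified primes.
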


\begin{proof} The desired conclusion follows by means of the argument of the proof of 
\cite[Lemma 6.7(a)]{LL1993}.
\end{proof}

We now examine the expression
\begin{equation}\label{5.5}
I_0=\sum_{1\le q\le P}\phi(q)^{-s}B(n,q)\int_{-1/(qQ)}^{1/(qQ)}V(\lam)^se(-n\lam)\d\lam .
\end{equation}
Write
$$V^*(\lam)=k^{-1}\sum_{N_1^k\le m\le N_2^k}m^{-1+1/k}e(\lam m)$$
and
$$\calJ(n)=\int_{-1/2}^{1/2}V^*(\lam)^se(-n\lam)\d\lam .$$
We remark that, by orthogonality, one finds that $\calJ(n)$ counts the number of solutions of the 
equation
$$m_1+\ldots +m_s=n,$$
with $N_1^k\le m_i\le N_2^k$ $(1\le i\le s)$, and with each solution $\bfm$ counted with weight 
$k^{-s}(m_1\ldots m_s)^{-1+1/k}$. Thus we have
\begin{equation}\label{5.6}
\calJ(n)\asymp Y^{s-1}X^{1-k}.
\end{equation}
Our first step in the analysis of the relation (\ref{5.5}) is to replace the integral on the right 
hand side by $\calJ(n)$, with an acceptable error term.\par

First, by partial summation and a change of variable (compare the discussion following the proof of 
\cite[Lemma 2.7]{Vau1997}), one finds that
\begin{align}
V(\lam)&=\int_{N_1}^{N_2}e(\lam u^k)\d u+O(1+X^{k-1}Y|\lam|)\notag \\
&=V^*(\lam)+O(1+X^{k-1}Y|\lam|).\label{5.7}
\end{align}
By partial summation, one obtains the estimate
\begin{equation}\label{5.8}
V^*(\lam)\ll Y(1+X^{k-1}Y\|\lam \|)^{-1}.
\end{equation}
Recall from (\ref{2.3}) that $Q=X^{k-2}Y^2P^{-1}$, and recall also that our hypotheses concerning $Y$ 
ensure that $Y\ge X^{3/4}$. Thus $X^{k-1}Y/Q=XP/Y\ll Y^{1/2}$, and we see from (\ref{5.7}) that
whenever $|\lam|\le 1/(qQ)$, then $V(\lam)=V^*(\lam)+O(Y^{1/2})$. These estimates lead us, via 
(\ref{5.8}), to the relation
\begin{align*}
\int_{-1/(qQ)}^{1/(qQ)}V(\lam)^s&e(-n\lam)\d\lam -\int_{-1/(qQ)}^{1/(qQ)}V^*(\lam)^s
e(-n\lam)\d\lam \\
&\ll Y^{1/2}\int_{-1/(qQ)}^{1/(qQ)}Y^{s-1}(1+X^{k-1}Y|\lam|)^{1-s}\d\lam +Y^{s/2}Q^{-1}\\
&\ll Y^{s-3/2}X^{1-k}+PY^{-2+s/2}X^{2-k}.
\end{align*}
Thus, when $s\ge 3$, we see from (\ref{5.8}) that when $1\le q\le P$, one has
\begin{align*}
\int_{-1/(qQ)}^{1/(qQ)}V(\lam)^se(-n\lam)\d\lam -\calJ(n)&\ll \int_{1/(qQ)}^{1/2}\frac{Y^s}
{(1+X^{k-1}Y\lam)^s}\d\lam +Y^{s-3/2}X^{1-k}\\
&\ll Y^{s-1}X^{1-k}\left(\frac{X^{k-1}Y}{PQ}\right)^{-1}+Y^{s-3/2}X^{1-k}\\
&\ll Y^sX^{-k}+Y^{s-3/2}X^{1-k}.
\end{align*}
Hence we deduce that
\begin{equation}\label{5.9}
\int_{-1/(qQ)}^{1/(qQ)}V(\lam)^se(-n\lam)\d\lam =\calJ(n)+O(Y^{s-1}X^{1-k-2\del}).
\end{equation}

\par Recall next from \cite[Lemma 8.5]{Hua1965} that when $(a,q)=1$, one has
$$C(\chi_0,a)=\sum^q_{\substack{h=1\\ (h,q)=1}}e(ah^k/q)\ll q^{1/2+\eps}.$$
Thus we deduce that whenever $s\ge 5$, one has
$$\sum_{q>P}\phi(q)^{-s}B(n,q)\ll \sum_{q>P}\phi(q)^{1-s}(q^{1/2+\eps})^s\ll P^{\eps-1/2}$$
and
$$\sum_{1\le q\le P}\phi(q)^{-s}|B(n,q)|\ll 1.$$
By substituting (\ref{5.9}) into (\ref{5.5}), we therefore deduce that
\begin{align*}
I_0-\calJ(n)\sum_{1\le q\le P}\phi(q)^{-s}B(n,q)&\ll Y^{s-1}X^{1-k-2\del}
\sum_{1\le q\le P}\phi(q)^{-s}|B(n,q)|\\
&\ll Y^{s-1}X^{1-k-\del},
\end{align*}
whence, in view of (\ref{5.6}), one sees that
$$I_0-\grS(n)\calJ(n)\ll P^{\eps-1/2}Y^{s-1}X^{1-k}+Y^{s-1}X^{1-k-\del}.$$
We thus conclude that
\begin{equation}\label{5.10}
I_0=\grS(n)\calJ(n)+O(Y^{s-1}X^{1-k-\del}).
\end{equation}
In particular, provided that $s>\max\{4,k(k-1)\}$ and $n\equiv s\mmod{R(k)}$ (and, in the case $k=3$ 
and $s=7$, one has in addition $9\nmid n$), then
\begin{equation}\label{5.11}
I_0\gg Y^{s-1}X^{1-k}.
\end{equation}

\par It remains to handle the contribution of the remaining terms $I_j$ within (\ref{5.1}). The 
discussion of these terms requires us to analyse the auxiliary expressions
$$J_\nu (g)=\sum_{1\le r\le P}[g,r]^{1+\nu-s/2}\sideset{}{^*}\sum_{\chi\nmod{r}}
\max_{|\lam|\le 1/(rQ)}|W(\chi,\lam)|$$
and
$$K_\nu (g)=\sum_{1\le r\le P}[g,r]^{1+\nu-s/2}\sideset{}{^*}\sum_{\chi\nmod{r}}
\biggl( \int_{-1/(rQ)}^{1/(rQ)}|W(\chi,\lam)|^2\d\lam \biggr)^{1/2},$$
in which the asterisk decorating the summations over characters $\chi$ is used to denote that the sums 
are restricted to primitive characters modulo $r$. Throughout, we define $P$ and $Q$ as in (\ref{2.3}). 
In \S7 we establish the following estimate for $K_\nu(g)$.

\begin{lemma}\label{lemma5.2}
Let $\nu$ be a sufficiently small positive number. Then there is a positive number $c$ with the property 
that
$$K_\nu(g)\ll g^{1+2\nu-s/2}(YX^{1-k})^{1/2}L^c.$$
\end{lemma}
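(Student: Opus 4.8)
The quantity $K_\nu(g)$ is a weighted sum over primitive characters $\chi$ of the mean-square of $W(\chi,\lam)$ over the short arc $|\lam|\le 1/(rQ)$, where $W(\chi,\lam)$ is the character-twisted prime exponential sum with the principal-character main term subtracted off. The natural route is to bound the inner mean-square integral in terms of the prime (or von~Mangoldt) counting function in short intervals with arithmetic progressions removed, i.e.\ a Barban--Davenport--Halberstam-type estimate adapted to the short-interval setting. First I would use the explicit formula (or the identity relating $W(\chi,\lam)$ to sums over zeros of $L(s,\chi)$), combined with a zero-density estimate for Dirichlet $L$-functions, to bound $\int_{-1/(rQ)}^{1/(rQ)}|W(\chi,\lam)|^2\d\lam$ on average over primitive $\chi\nmod r$ and over $r\le P$. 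Concretely, by a Gallagher-type argument the integral over $|\lam|\le 1/(rQ)$ of $|W(\chi,\lam)|^2$ is controlled by $(rQ)^{-2}$ times a sum over the length-$rQ$ windows of $\bigl|\sum_{N_1\le p\le N_2,\,x<p^k\le x+rQ}(\log p)\chi(p)-\del_\chi(\cdots)\bigr|^2$; after opening squares and summing over $\chi$ one lands on the classical large-sieve/BDH input.

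**Key steps in order.** (1) Reduce the mean-square integral for each primitive $\chi\nmod r$ to a Gallagher-type sum over short windows of twisted prime sums, exploiting $Q=X^{k-2}Y^2P^{-1}$ so that $1/(rQ)$ is the relevant scale and $N_2^k-N_1^k\asymp X^{k-1}Y$. (2) Apply the short-interval analogue of the Barban--Davenport--Halberstam theorem (of the type underpinning the major-arc analysis in \cite{LZ2012} and in the $k=2$ work of \cite{KL2012}), which gives, for $P$ a small power of $X$, a bound of the shape $\sum_{r\le P}\frac{r}{\phi(r)}\sideset{}{^*}\sum_{\chi\nmod r}\int_{-1/(rQ)}^{1/(rQ)}|W(\chi,\lam)|^2\d\lam\ll YX^{1-k}L^{-A}$ for any fixed $A$, valid because $P$ is much smaller than any fixed power of $X$ needed by the zero-density bounds and because $Y\ge X^{3/4}$ keeps us within the Huxley range. (3) Insert the weight $[g,r]^{1+\nu-s/2}$ by writing $[g,r]=gr/(g,r)$ and splitting the sum over $r$ according to $d=(g,r)$; since $s\ge 5$ (indeed $s>2t_k$), the exponent $1+\nu-s/2$ is negative, so $[g,r]^{1+\nu-s/2}\le g^{1+\nu-s/2}(r/d)^{1+\nu-s/2}d^{0}$ after a routine divisor manipulation, extracting the clean factor $g^{1+2\nu-s/2}$ once the residual $d$-sum is absorbed into $g^{\nu}$. (4) Apply Cauchy--Schwarz to the $r$- and $\chi$-sum, pairing the mean-square bound from step~(2) against $\sum_{r\le P}(r/\phi(r))\cdot\#\{\text{primitive }\chi\nmod r\}\ll P^2$, which costs only a power of $L$ and a bounded power of $P=X^{2K\del}$; choosing $\del$ small (as permitted: $4K\del<\min\{\tet-\tet_k,1-\tet\}$) makes this a negligible $X^{o(1)}$ loss, leaving $(YX^{1-k})^{1/2}L^c$.

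**Main obstacle.** The crux is step~(2): establishing the short-interval Barban--Davenport--Halberstam estimate with the mean-square taken over the \emph{very short} arcs $|\lam|\le 1/(rQ)$ rather than over all of $[0,1)$. This is where the restriction $Y\ge X^{3/4}$ (equivalently $\tet>3/4$, which holds since $\tet>\tet_k\ge 19/24>3/4$... wait, $19/24<3/4$, so more precisely one uses $Y\ge X^{3/4}$ coming from $\tet_k+4K\del<\tet$ together with the range of interest) and the Huxley-type zero-density machinery enter; one must be careful that the subtracted main term $\del_\chi V(\lam)$ exactly cancels the contribution of the near-real zero / principal term so that what remains genuinely decays like $L^{-A}$. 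I would handle this by citing the relevant lemma from \cite{LZ2012} (Lemmata~6.4 and~8.3 there, as referenced in \S2) or by adapting the argument of \cite[Lemma~5.3]{KL2012} to general $k$, exactly as the proof of Proposition~\ref{prop2.1} adapts \cite[Proposition~5.1]{KL2012}; the generalisation in $k$ is harmless because the only $k$-dependence is through the length $X^{k-1}Y$ of the image interval $[N_1^k,N_2^k]$, which is tracked cleanly throughout. Everything else---the divisor bookkeeping with $[g,r]$ and the final Cauchy--Schwarz---is routine and loses only $L^c$.
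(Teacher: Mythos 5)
Your high-level outline shares the paper's opening moves, but two of the four steps are concretely wrong, and the wrongness is not cosmetic.

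\textbf{Step (1) and the divisor bookkeeping in step (3) are fine.} The reduction from $W$ to $\What$ (losing $O(YX^{-1/2})$ per character, then summing), the Gallagher-lemma reduction of $\int_{-1/(rQ)}^{1/(rQ)}|\What|^2\,d\lam$ to mean squares of twisted $\Lam$-sums over windows of length $\asymp rQ$, and the manipulation $[g,r]=gr/(g,r)$ with a sum over $d\mid g$ all appear essentially verbatim in the paper.

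\textbf{Step (4) is a genuine error.} You propose Cauchy--Schwarz over $(r,\chi)$, pairing the mean-square against $\sum_{r\le P}(r/\phi(r))\#\{\text{primitive }\chi\}\ll P^2$, and assert this costs only $X^{o(1)}$ once $\del$ is small. But $\del$ is a \emph{fixed} positive constant (chosen once and for all subject to $4K\del<\min\{\tet-\tet_k,1-\tet\}$), so $P=X^{2K\del}$ is a fixed positive power of $X$; it is never $X^{o(1)}$, and it certainly does not shrink to $L^c$. Moreover, since $1+\nu-s/2<0$, pushing the weight $[g,r]^{1+\nu-s/2}$ through Cauchy--Schwarz destroys the bookkeeping: $\bigl(\sum_{r\le P}[g,r]^{1+\nu-s/2}\phi^*(r)\bigr)^{1/2}$ produces a factor of the wrong shape in $g$, and the residual $r$-sum only converges for $s\ge 7$, not $s\ge 5$. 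The paper avoids Cauchy--Schwarz over $\chi$ entirely, precisely because the weights already decay; instead it proves a dyadic estimate \textrm{(6.1)} for each block $r\sim R$ and sums the blocks.

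\textbf{Step (2) claims the wrong input.} You invoke a ``short-interval Barban--Davenport--Halberstam'' second-moment estimate with arbitrary $L^{-A}$ decay. No such estimate is used (or, in this generality, available without GRH): the $L^{-B}$ savings appear in the paper only for $J_\nu(1)$ in Lemma~\ref{lemma5.3}, where the extra leverage comes from $g=1$, a zero-free region, and the Huxley zero-density estimate, and it is proved in \S7, not \S6. For $K_\nu(g)$ the paper obtains only an $L^c$ \emph{loss}, not a gain, which is all Lemma~\ref{lemma5.2} asserts. The actual engine is a Perron-type truncated representation (Lemma~\ref{lemma6.1}) expressing the short-interval $\Lam$-sum as a weighted integral of the Dirichlet polynomial $F(\kap+i\tau,\chi)$, followed by the Choi--Kumchev \emph{first-moment} mean value theorem (Lemma~\ref{lemma6.2}) averaged over primitive $\chi\bmod r$, $r\sim R$, $l\mid r$. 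Applying the first-moment estimate directly keeps the $[g,r]^{1+\nu-s/2}$ weights linear and recovers the clean factor $g^{1+2\nu-s/2}$. The crucial numerical check you never confront is that the term $RX^{11/20}$ produced by Choi--Kumchev is dominated by $RQX^{1-k}$, which requires $Y\gg P^{1/2}X^{31/40}$; this is exactly where the hypothesis $\tet_k\ge 19/24>31/40$ is used, and it is a genuine constraint, not a formality.

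In short: keep your step (1); replace steps (2) and (4) by the Perron/Dirichlet-polynomial reduction plus the Choi--Kumchev first-moment mean value, applied dyadically in $r$; and verify the threshold $Y\gg P^{1/2}X^{31/40}$.
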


In \S8 we turn our attention toward the expression $J_\nu(g)$, poviding the following estimates.

\begin{lemma}\label{lemma5.3}
Let $\nu$ be a sufficiently small positive number. Then there is a positive number $c$ with the 
property that
$$J_\nu(g)\ll g^{1+2\nu-s/2}YL^c.$$
Also, for any positive number $B$, one has
$$J_\nu(1)\ll YL^{-B}.$$
\end{lemma}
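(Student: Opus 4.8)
The plan is to estimate $J_\nu(g)$ by passing to the associated sum over primitive characters and then invoking a classical large-sieve-type bound on sums of $\Lambda(n)\chi(n)e(n^k\lam)$ over almost-equal ranges. First I would record that, by orthogonality and the relation between primitive and imprimitive characters, one has $W(\chi,\lam)\ll L\max_{|\lam|\le 1/(rQ)}|\sum_{N_1\le p\le N_2}(\log p)\chi^*(p)e(p^k\lam)-\del_{\chi^*}V(\lam)|$ for the inducing primitive character $\chi^*$; thus the whole weight of the estimate falls on a Gallagher-type or Liu-Zhan-type bound for $W(\chi,\lam)$ valid uniformly for $\chi$ primitive modulo $r\le P$ and $|\lam|\le 1/(rQ)$. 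The key input is the short-interval analogue of the classical bound
$$\sideset{}{^*}\sum_{\chi\nmod{r}}\max_{|\lam|\le 1/(rQ)}\Bigl|\sum_{N_1\le p\le N_2}(\log p)\chi(p)e(p^k\lam)-\del_\chi V(\lam)\Bigr|\ll \bigl(Yr^{1/2}+Yr^{-1/2}X^{\del_0}\cdot(\text{zero-density saving})\bigr)L^c,$$
where the saving comes from a zero-free region and a log-free zero-density estimate for Dirichlet $L$-functions; this is precisely the kind of estimate established in the work of Liu, L\"u and Zhan on enlarged major arcs (cf.\ \cite{LLZ2006,LZ1998}) and adapted to the weighted sum with an extra $\log p$ by partial summation as in Proposition \ref{prop2.1}. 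Assuming such a bound, summing $[g,r]^{1+\nu-s/2}$ against it over $1\le r\le P$ and using $[g,r]\ge g$ and $[g,r]\ge r$ gives the dominant contribution $g^{1+2\nu-s/2}\sum_{r\le P}r^{1+2\nu-s/2}\cdot(\text{the }\chi\text{-sum bound})$; since $s>t_k$ forces $s\ge 7$ (and in all cases $s\ge 5$ suffices here), the exponent $1+2\nu-s/2$ is comfortably below $-3/2$, so the series over $r$ converges and one is left with $J_\nu(g)\ll g^{1+2\nu-s/2}YL^c$, which is the first assertion.

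For the second assertion, I would take $g=1$, so that $[g,r]=r$, and exploit the extra room: with $s\ge 5$ the $r$-sum converges very rapidly, so the decisive point is to beat an arbitrary power of $L$ in the bound for the character sum when $r$ is small. Here one separates the character $\chi_0$ contribution (which is empty, since the $\del_\chi V(\lam)$ term cancels the principal part, leaving only a prime-counting error in short intervals of size $O(Y\exp(-c\sqrt{L}))$ by the Siegel--Walfisz theorem / short-interval prime number theorem with $Y\ge X^{\tet_k}$) from the non-principal contribution, which is handled by a zero-density estimate à la Huxley together with the explicit formula truncated to a suitable height. The standard device (as in \cite{LL1993}, whose Lemma 6.7(a) is already cited in Lemma \ref{lemma5.1}) is to split the range of $r$ at $L^{B'}$ for a large $B'$ depending on $B$: for $r\le L^{B'}$ one uses the power-of-log-free bound coming from the Deuring--Heilbronn phenomenon and a zero-free region, gaining $\exp(-c\sqrt{L})$, while for $r>L^{B'}$ one simply uses the crude bound $W(\chi,\lam)\ll Y L$ together with $\sum_{r>L^{B'}}r^{1+2\nu-s/2}\ll L^{-B'(s/2-2-2\nu)}$, which is $\ll L^{-B}$ once $B'$ is chosen large enough. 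Combining the two ranges yields $J_\nu(1)\ll YL^{-B}$.

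The main obstacle, and the step I expect to absorb most of the real work, is establishing the uniform hybrid bound for $\sum^*_\chi\max_{|\lam|}|W(\chi,\lam)|$ over almost-equal intervals $[N_1,N_2]$ with $N_2-N_1=2Y$ and $Y$ as small as $X^{\tet_k}$: the short-interval constraint means one cannot simply invoke the classical large-sieve/zero-density machinery off the shelf, but must instead run the explicit-formula argument with a smoothing adapted to the interval of length $Y$, control the contribution of zeros $\rho=\bet+i\gam$ with $|\gam|$ up to roughly $X^{k-1}Y/(rQ)=XP/(r)\ll X^{1+2K\del}$, and invoke the relevant zero-density estimate (Huxley's $N(\sig,T,\chi)\ll (qT)^{(12/5)(1-\sig)+\eps}$, which is exactly what forces the exponent $\tet_k=\tfrac{19}{24}$ for $k=2$ via the identity $\tfrac{19}{24}=\tfrac12(1+\tfrac{7}{12})$ remarked on in the introduction). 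Once that hybrid bound is in hand with the stated uniformity, the remaining summations over $r$ and the splitting argument for the $L^{-B}$ refinement are entirely routine; these details are carried out in \S8.
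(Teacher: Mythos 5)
Your overall plan is in the right spirit, and the two-range split of the $r$-summation in the second half matches the paper's decomposition, but there is a concrete gap in your treatment of the large moduli that your argument does not close for all values of $s$ the paper requires.

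For the first estimate $J_\nu(g)\ll g^{1+2\nu-s/2}YL^c$, the paper does not invoke a zero-density saving: it adapts the proof of Lemma~\ref{lemma5.2}, whose engine is Gallagher's lemma together with the Choi--Kumchev mean value theorem for Dirichlet polynomials (Lemma~\ref{lemma6.2}), which bounds $\sum_{r\sim R,\,l\mid r}\sum^*_\chi\int|F(\kappa+i\tau)|\,\mathrm{d}\tau$ directly, with the first inequality of Lemma~\ref{lemma6.1} in place of the second. Your proposed ``key input,'' a pointwise bound of the displayed shape for $\sum^*_{\chi}\max_{|\lambda|}|W(\chi,\lambda)|$ with an unspecified zero-density saving, is a different and stronger statement that would itself need to be proved; also, the reduction to the inducing primitive character $\chi^*$ is unnecessary, since the asterisked sums in the definitions of $J_\nu$ and $K_\nu$ are already taken over primitive characters. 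These are differences of route rather than errors, but they matter because your proposed input is not a shelf item.

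The genuine gap is in the range $r>L^{B'}$ for the second estimate $J_\nu(1)\ll YL^{-B}$. The crude bound $W(\chi,\lambda)\ll YL$ must still be summed over the roughly $\phi(r)$ primitive characters modulo $r$, so the actual contribution is
$$\ll\ YL\sum_{r>L^{B'}}r^{1+\nu-s/2}\phi(r)\ \asymp\ YL\sum_{r>L^{B'}}r^{2+\nu-s/2},$$
which converges only when $s\ge 7$; your displayed sum $\sum_{r>L^{B'}}r^{1+2\nu-s/2}$ has dropped a factor of $r$. The supporting claim that ``$s>t_k$ forces $s\ge 7$'' is also false for $k=2$: from~(\ref{1.2}) one has $t_2=3$, so Theorem~\ref{theorem1.2} needs the major-arc apparatus at $s=5$ and $s=6$ as well (the case $k=2$, $s=4$ being delegated to \cite{KL2012}), and for those values $\sum_{r\le P}r^{2+\nu-s/2}$ grows as a positive power of $P$, which ruins the bound. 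The paper avoids this by rerunning the proof of the first estimate with $g=1$ and the $r$-sum restricted to a dyadic block $r\sim R$: with $g=1$ one can extract an extra factor $R^{-1/4}$ from the $r$-summations without damaging convergence, so the tail $R>L^B$ contributes $\ll YL^cR^{-1/4}\ll YL^{-A}$, and this refinement works for every $s\ge 5$. Your treatment of the small moduli $r\le L^{B'}$, via the truncated explicit formula, the classical zero-free region, disposal of exceptional zeros, and Huxley's zero-density estimate, with truncation height $T$ of order $X^{5/12-\nu}$, does match the paper's argument in~\S7.
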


Granted the validity of Lemmata \ref{lemma5.2} and \ref{lemma5.3}, we are now equipped to establish 
that for each positive number $A$, one has
\begin{equation}\label{5.14}
I_j\ll Y^{s-1}X^{1-k}L^{-A}.
\end{equation}
By substituting this estimate together with (\ref{5.10}) and (\ref{5.11}) into (\ref{5.1}), we 
conclude that
$$\int_\grM f(\alp)^se(-n\alp)\d\alp =\grS(n)\grJ(n)+O(Y^{s-1}X^{1-k}L^{-1})\gg Y^{s-1}X^{1-k},$$
thereby completing the proof of Proposition \ref{prop2.1}.\par

We begin by confirming the estimate (\ref{5.14}) in the case $j=s$. By reference to (\ref{5.2}) and 
(\ref{5.3}), we may reduce the characters occurring in the summation into primitive characters, 
thereby obtaining the upper bound
$$|I_s|\le \sum_{\bfr}\sideset{}{^*}\sum_{\bfchi\nmod{\bfr}}\sum_{\substack{1\le q\le P\\ r_0|q}}
\phi(q)^{-s}|B(n,q;\chi_1\chi_0,\ldots ,\chi_s\chi_0)|\calI(r_0,\bfchi),$$
where the first summation denotes a sum over $1\le r_j\le P$ $(1\le j\le s)$, the second denotes 
one over $\chi_j\nmod{r_j}$ $(1\le j\le s)$, and
$$\calI(r_0,\bfchi)=\int_{-1/(r_0Q)}^{1/(r_0Q)}|W(\chi_1,\lam)\ldots W(\chi_s,\lam)|\d\lam .$$
An application of Lemma \ref{lemma5.1} yields the bound
$$I_s\ll L^c\sum_\bfr r_0^{1+\eps-s/2}\sideset{}{^*}\sum_{\bfchi \nmod{\bfr}}\calI(r_0,\bfchi).$$
By Cauchy's inequality, therefore, we discern that
\begin{equation}\label{5.15}
I_s\ll L^c\sum_{\bfr'}\sideset{}{^*}\sum_{\bfchi'\nmod{\bfr'}}\biggl(\prod_{j=1}^{s-2}
\max_{|\lam|\le 1/(r_jQ)}|W(\chi_j,\lam)|\biggr)\grT(r_1,\ldots,r_{s-2}),
\end{equation}
where the first summation denotes a sum over $1\le r_j\le P$ $(1\le j\le s-2)$, the second denotes one 
over $\chi_j\nmod{r_j}$ $(1\le j\le s-2)$, and
\begin{equation}\label{5.16}
\grT(r_1,\ldots ,r_{s-2})=\sum_{1\le r_{s-1}\le P}\ \sideset{}{^*}\sum_{\chi_{s-1}\nmod{r_{s-1}}}
\grT_1(r_{s-1},\chi_{s-1})\grT_2(r_1,\ldots ,r_{s-1}),
\end{equation}
in which
$$\grT_1(r_{s-1},\chi_{s-1})=\biggl( \int_{-1/(r_{s-1}Q)}^{1/(r_{s-1}Q)}|W(\chi_{s-1},\lam)|^2\d\lam 
\biggr)^{1/2}$$
and
$$\grT_2(r_1,\ldots ,r_{s-1})=\sum_{1\le r_s\le P}r_0^{1+\eps-s/2}
\sideset{}{^*}\sum_{\chi_s\nmod{r_s}}\biggl( \int_{-1/(r_sQ)}^{1/(r_sQ)}|W(\chi_s,\lam)|^2\d\lam 
\biggr)^{1/2}.$$

\par Now we follow the iterative procedure of \cite{Liu2005} in order to bound the above sums over 
$r_s, r_{s-1},\dots ,r_1$ in turn. First, on noting that
$$r_0=[r_1,\ldots ,r_s]=[[r_1,\ldots ,r_{s-1}],r_s],$$
we deduce from Lemma \ref{lemma5.2} that
$$\grT_2(r_1,\ldots ,r_{s-1})=K_\eps ([r_1,\ldots ,r_{s-1}])\ll 
[r_1,\ldots ,r_{s-1}]^{1+2\eps-s/2}Y^{1/2}X^{(1-k)/2}L^c.$$
Substituting this estimate into (\ref{5.16}), and applying Lemma \ref{lemma5.2} once again, we 
deduce that
\begin{align*}
\grT(r_1,\ldots ,r_{s-2})&\ll Y^{1/2}X^{(1-k)/2}L^cK_{2\eps}([r_1,\ldots ,r_{s-2}])\\
&\ll [r_1,\ldots ,r_{s-2}]^{1+4\eps-s/2}YX^{1-k}L^{2c}.
\end{align*}
Next substituting this estimate into (\ref{5.15}), we may apply Lemma \ref{lemma5.3} to bound the sum 
over $r_{s-2}$ and $\chi_{s-2}$ on the right hand side to obtain the bound
\begin{align*}\sum_{r_{s-2}}\sum_{\chi_{s-2}}\max_{|\lam|\le 1/(r_{s-2}Q)}
&|W(\chi_{s-2},\lam)|\grT(r_1,\ldots ,r_{s-2})\\
&\ll YX^{1-k}L^{2c}J_{4\eps }([r_1,\ldots ,r_{s-3}])\\
&\ll [r_1,\ldots ,r_{s-3}]^{1+8\eps-s/2}Y^2X^{1-k}L^{3c}.
\end{align*}
Proceeding in a similar manner to bound successively the summations over $r_{s-3},\ldots,r_1$, we 
arrive in the final step at the bound
$$I_s\ll Y^{s-2}X^{1-k}L^{(s-1)c}\sum_{1\le r_1\le P}r_1^{1+2^{s-1}\eps-s/2}\sideset{}{^*}
\sum_{\chi_1\nmod{r_1}}\max_{|\lam|\le 1/(r_1Q)}|W(\chi_1,\lam)|.$$
We therefore conclude from an application of Lemma \ref{lemma5.3}, with $B$ taken to be sufficiently 
large in terms of $c$, that for each positive number $A$ one has
$$I_s\ll Y^{s-2}X^{1-k}L^{(s-1)c}J_{2^{s-1}\eps }(1)\ll Y^{s-1}X^{1-k}L^{-A}.$$
This confirms the estimate (\ref{5.14}) in the case $j=s$.\par

The other terms $I_j$ with $1\le j\le s-1$ may be handled in a manner similar to, though simpler than, 
that applied in the case $j=s$. Recall (\ref{2.3}) and the hypothesis $Y\ge X^{3/4}$. Then one observes 
that, as a consequence of (\ref{5.7}) and (\ref{5.8}), when $|\lam|\le 1/Q$, one has 
$$V(\lam)=k^{-1}\sum_{N_1^k\le m\le N_2^k}m^{-1+1/k}e(\lam m)+O(Y^{1/2}),\quad 
\max_{|\lam|\le Q^{-1}}|V(\lam)|\ll Y,$$
and
\begin{align*}
\int_{-1/Q}^{1/Q}|V(\lam)|^2\d\lam &\ll Y^2(X^{k-1}Y)^{-1}+YQ^{-1}\\
&\ll YX^{1-k}+PY^{-1}X^{2-k}\ll YX^{1-k}.
\end{align*}
This completes our account of the proof of the estimates (\ref{5.14}), and 
hence also of the proof of Proposition \ref{prop2.1}, subject to the account in the following two 
sections of the proof of Lemmata \ref{lemma5.2} and \ref{lemma5.3}.

\section{The estimation of $K_\nu (g)$} The approach that we adopt in the proofs of Lemmata 
\ref{lemma5.2} and \ref{lemma5.3} is similar to that of \cite[\S2]{LW2008}, which combines the standard 
approach to such problems (as in \cite{LX2007} and \cite{ST2009}) and the mean value theorem of Choi 
and Kumchev \cite{CK2006}. We begin by introducing a special case of an immediate generalisation of 
\cite[Lemma 2.2]{LW2008}.

\begin{lemma}\label{lemma6.1} Let $\chi$ be a Dirichlet character modulo $r$. In addition, let $\calQ$, 
$\calX$ and $\calY$ be real numbers with $\calQ\ge r$, $2\le \calX<\calY\le 2\calX$ and 
$\|\calX\|\asymp \|\calY\|\asymp 1$, and put
$$\calT_0=\left( \log (\calY/\calX)\right)^{-1},\quad \calT_1=\min \left\{ 
\left( \log (\calY/\calX)\right)^{-2}, 4k\pi \calX^k/(r\calQ)\right\},$$
$$\calT_2=4k\pi \calX^k/(r\calQ),\quad \calT_3=\calX^{2k}\quad \text{and}\quad 
\kap=(\log \calX)^{-1}.$$
Define $F(s)=F(s,\chi)$ by
$$F(s,\chi)=\sum_{\calX<n\le 2\calX}\Lam(n)\chi(n)n^{-s}$$
and $W(\bet)=W_\chi(\bet;\calX,\calY)$ by
$$W_\chi(\bet;\calX,\calY)=\biggl|\sum_{\calX\le n\le \calY}\Lam(n)\chi(n)e(\bet n^k)\biggr|.$$
Then we have
\begin{align*}
\max_{|\bet|\le 1/(r\calQ)}W(\bet)\ll &\, \log (\calX/\calY)\underset{|\tau|\le \calT_1}{\int}
|F(\kap+i\tau)|\d\tau +\underset{\calT_1<|\tau|\le \calT_2}{\int}\frac{|F(\kap+i\tau)|}{|\tau|^{1/2}}
\d\tau \\
&\, +\underset{\calT_2<|\tau|\le \calT_3}{\int}\frac{|F(\kap+i\tau)|}{|\tau|}\d\tau +1
\end{align*}
and
$$W(0)\ll \log (\calX/\calY)\underset{|\tau|\le \calT_0}{\int}|F(\kap+i\tau)|\d\tau +
\underset{\calT_0<|\tau|\le \calT_3}{\int}\frac{|F(\kap+i\tau)|}{|\tau|}\d\tau +1.$$
\end{lemma}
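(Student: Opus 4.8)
The plan is to obtain both bounds from an exact Mellin-inversion identity, the point being that $F(s,\chi)$ is a \emph{finite} Dirichlet polynomial, so that no pole or tail of $-L'/L$ intervenes and the only error comes from truncating the contour. First I would note that the cutoff $g(u)=e(\bet u^k)\mathbf 1_{(\calX,\calY]}(u)$ has Mellin transform $G_\bet(s)=\int_\calX^\calY u^{s-1}e(\bet u^k)\,\d u$ and is continuous at every integer because $\|\calX\|\asymp\|\calY\|\asymp 1$. Since the coefficients of $F$ are $\Lam(n)\chi(n)$ supported on $\calX<n\le 2\calX\supseteq(\calX,\calY]$, Mellin inversion together with the absolute convergence of the (finite) Dirichlet polynomial gives, with no endpoint defect,
\[
W_\chi(\bet;\calX,\calY)=\Bigl|\frac{1}{2\pi i}\int_{\kap-i\infty}^{\kap+i\infty}F(\kap+i\tau,\chi)\,G_\bet(\kap+i\tau)\,\d\tau\Bigr|.
\]
I would then truncate the integral at $|\tau|=\calT_3=\calX^{2k}$: for $|\tau|>\calX^{2k}$ the factor $u^{i\tau}$ dominates $e(\bet u^k)$ in the integrand of $G_\bet$, the phase derivative is $\asymp\tau/\calX$ with no stationary point in $[\calX,\calY]$, and repeated integration by parts makes $G_\bet(\kap+i\tau)$ decay faster than any power of $\calX$; against the trivial bound $F(\kap+i\tau,\chi)\ll\calX^{1+\eps}$ this contributes only the $O(1)$ term.

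The heart of the matter is the estimation of the kernel $G_\bet(\kap+i\tau)$, whose integrand I would write as $e(\phi_\tau(u))u^{\kap-1}$ with $\phi_\tau(u)=\bet u^k+(2\pi)^{-1}\tau\log u$, so $\phi_\tau'(u)=k\bet u^{k-1}+\tau/(2\pi u)$ and $\phi_\tau''(u)=k(k-1)\bet u^{k-2}-\tau/(2\pi u^2)$. Since $\calX^{\kap}\asymp 1$, the trivial bound is $G_\bet(\kap+i\tau)\ll\calX^{\kap-1}(\calY-\calX)\ll\log(\calY/\calX)$, which I would apply on the short central range $|\tau|\le\calT_1$. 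On $\calT_1<|\tau|\le\calT_2$ two subcases arise: if $\bet$ and $\tau$ have opposite sign then $\phi_\tau''$ does not change sign and $|\phi_\tau''(u)|\gg|\tau|/\calX^2$ throughout $[\calX,\calY]$, so the second-derivative van der Corput test gives $G_\bet(\kap+i\tau)\ll\calX^{\kap-1}\cdot\calX|\tau|^{-1/2}\ll|\tau|^{-1/2}$; if they have the same sign then $\phi_\tau'$ has no zero and $|\phi_\tau'(u)|\gg|\tau|/\calX$, so the first-derivative test gives $G_\bet(\kap+i\tau)\ll|\tau|^{-1}$, which is no larger than $|\tau|^{-1/2}$ because the hypothesis $\calY\le 2\calX$ forces $|\tau|>\calT_1=(\log(\calY/\calX))^{-2}\ge 1$ whenever the middle range is non-empty. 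Finally, on $\calT_2<|\tau|\le\calT_3$ the choice $\calT_2=4k\pi\calX^k/(r\calQ)$ together with $|\bet|\le(r\calQ)^{-1}$ ensure $\phi_\tau'(u)\asymp\tau/u$ on all of $[\calX,\calY]$, and one integration by parts, estimating $\phi_\tau''/(\phi_\tau')^2\ll(|\tau|u)^{-1}$ over an interval of length $\asymp\calX\log(\calY/\calX)$, yields $G_\bet(\kap+i\tau)\ll|\tau|^{-1}$. Inserting these three kernel bounds into the truncated Mellin integral produces the first displayed inequality.

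The bound for $W(0)$ is the specialisation $\bet=0$: then $\phi_\tau(u)=(2\pi)^{-1}\tau\log u$ has $\phi_\tau'(u)=\tau/(2\pi u)$, which never vanishes, so the van der Corput range disappears; the trivial bound $\ll\log(\calY/\calX)$ and the integration-by-parts bound $\ll|\tau|^{-1}$ cross over at $|\tau|=\calT_0=(\log(\calY/\calX))^{-1}$, which gives the second displayed inequality. I expect the main obstacle to be the bookkeeping in the middle range $\calT_1<|\tau|\le\calT_2$: checking that the stationary-phase estimate is uniform in $\tau$ and $\bet$ and that the error term in the second-derivative test really is absorbed into $|\tau|^{-1/2}$ (this is precisely where $\calT_1=\min\{(\log(\calY/\calX))^{-2},\calT_2\}$ is needed), together with verifying that with the stated constant in $\calT_2$ the derivative $\phi_\tau'$ is genuinely of size $\asymp\tau/u$ on the outer range so that the single integration by parts is licit. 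The truncation at $\calT_3$ and the passage from $W(\bet)$ to $W(0)$ are then routine, and the entire scheme is the short-interval analogue of the classical explicit-formula treatment underpinning \cite[Lemma 2.2]{LW2008}.
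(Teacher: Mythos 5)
The paper does not actually prove Lemma~\ref{lemma6.1}: the authors present it as ``a special case of an immediate generalisation of [Lemma~2.2]{LW2008}'' and defer entirely to that source. So there is no internal proof to compare against; your Mellin-inversion--plus--van der Corput scheme is the natural route and is almost certainly the one underlying the cited lemma.

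That said, there is a genuine gap in your treatment of the outer range $\calT_2<|\tau|\le\calT_3$. You claim that $\calT_2=4k\pi\calX^k/(r\calQ)$ together with $|\bet|\le(r\calQ)^{-1}$ forces $\phi_\tau'(u)\asymp\tau/u$ throughout $[\calX,\calY]$. This is verified only at $u=\calX$: there $|\tau|/(2\pi\calX)>\calT_2/(2\pi\calX)=2k\calX^{k-1}/(r\calQ)\ge 2k|\bet|\calX^{k-1}$, so the logarithmic term dominates. But at $u=\calY$ the comparison is between $|\tau|/(2\pi\calY)$ and $k|\bet|\calY^{k-1}$, and when $\bet$ and $\tau$ have opposite signs these cancel precisely at the stationary point $u_0=\bigl(|\tau|/(2\pi k|\bet|)\bigr)^{1/k}$. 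One has $u_0>\calX$ as soon as $|\tau|>\calT_2$, while $u_0\le\calY$ whenever $|\tau|\le 2\pi k|\bet|\calY^k$; under the lemma's hypotheses this range is non-empty (take $|\bet|$ near $(r\calQ)^{-1}$ and $\calY$ near $2\calX$) as soon as $\calY>2^{1/k}\calX$. On this sub-range the first-derivative test is unavailable, the second-derivative test gives only $G_\bet(\kap+i\tau)\ll|\tau|^{-1/2}$, and since $\calT_2>1$ in the regime of interest this is not absorbed by the $|\tau|^{-1}$ weight in the third integral; the discrepancy is a genuine factor $\calT_2^{1/2}$, not a constant. Your argument is therefore complete only under the stronger hypothesis $\calY\le 2^{1/k}\calX$ (equivalently, with $\calT_2$ enlarged to $\ge 4k\pi\calY^k/(r\calQ)$). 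In the paper's actual application $\calY/\calX-1$ is tiny, so the issue is vacuous there, but it is a real gap against the lemma as stated with $\calY\le 2\calX$; you should either impose the tighter hypothesis, or replace $\calX^k$ by $\calY^k$ in the definition of $\calT_2$, and say explicitly which you are doing.

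Two smaller points. First, the truncation at $\calT_3=\calX^{2k}$ is described rather loosely: single integrations by parts yield $G_\bet\ll|\tau|^{-1}$, which does not make the tail integrable against $|F|\ll\calX$; what one actually needs is the standard Perron-type accounting, noting that the integrand against each fixed $n$ contributes $O\bigl((\calX/n)^{\kap}/(\calT_3|\log(\calX/n)|)\bigr)$, and the sum of these over $\calX<n\le 2\calX$ (separating $n$ near $\calX,\calY$) is $O(1)$ for $k\ge 2$. Second, the factor $\log(\calX/\calY)$ in the statement is evidently a typo for $\log(\calY/\calX)$, which matches your trivial bound on $G_\bet$ and is what the rest of the section uses.
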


We next record the mean value theorem of Choi and Kumchev in the modified form presented by Li and 
Wu (see the formulation of \cite[Theorem 1.1]{CK2006} given in \cite[Lemma 2.1]{LW2008}).

\begin{lemma}\label{lemma6.2}
Let $l\in \dbN$, and let $R$, $T$ and $\calX$ be real numbers with $R\ge 1$, $T\ge 1$ and $\calX\ge 1$. 
Finally, put $\kap=(\log \calX)^{-1}$. Then there is an absolute constant $c>0$ for which
$$\sum_{\substack{r\sim R\\ l|r}}\,\sideset{}{^*}\sum_{\chi\nmod{r}}\int_{-T}^T\biggl| 
\sum_{\calX\le n\le 2\calX}\frac{\Lam(n)\chi(n)}{n^{\kap+i\tau}}\biggr| \d\tau \ll (l^{-1}R^2T
\calX^{11/20}+\calX)\left( \log (RT\calX)\right)^c.$$
\end{lemma}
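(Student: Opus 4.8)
The plan is to deduce the statement directly from the literature: it is precisely the reformulation, due to Li and Wu \cite[Lemma 2.1]{LW2008}, of the Choi--Kumchev mean value theorem \cite[Theorem 1.1]{CK2006}. The normalisation $\kap=(\log\calX)^{-1}$ is the natural one here, since $\calX^{-\kap}=e^{-1}$ and hence $n^{-\kap}\asymp 1$ throughout $\calX<n\le 2\calX$, so that $F(\kap+i\tau,\chi)$ behaves, up to a bounded factor, like the unweighted Dirichlet polynomial $\sum_{\calX<n\le 2\calX}\Lam(n)\chi(n)n^{-i\tau}$. Thus the whole proof is simply to quote these references; for completeness I would sketch the argument underpinning \cite[Theorem 1.1]{CK2006} as follows.

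One decomposes $\Lam(n)$ for $\calX<n\le 2\calX$ by means of Heath-Brown's identity (equivalently, an iterated Vaughan identity), writing the Dirichlet polynomial $\sum_{\calX<n\le 2\calX}\Lam(n)\chi(n)n^{-i\tau}$ as a sum of $O(L^{O(1)})$ pieces, each of which---after dyadic subdivision of the summation variables---is a product of boundedly many Dirichlet polynomials in $\chi$ of total length $\asymp\calX$, the factors carrying a M\"obius weight being of bounded length. A standard combinatorial lemma then recasts each product either as a \emph{type I} sum, with a single smooth factor of length $\gg\calX^{4/5}$, or as a \emph{type II} bilinear sum $M(\tau,\chi)N(\tau,\chi)$ with $M,N$ of lengths $\calM,\calN$ satisfying $\calX^{1/5}\ll\calM\le\calN\ll\calX^{4/5}$, $\calM\calN\asymp\calX$, and divisor-bounded coefficients. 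For the type I pieces I would estimate the long smooth factor pointwise by van der Corput's method (or an exponent pair) and the complementary short factor by Montgomery's hybrid mean value theorem for primitive characters, which supplies precisely the acceptable factor $l^{-1}R^2T+(\text{length})$. For the type II pieces I would apply H\"older's inequality both in $\tau$ and over the pairs $(r,\chi)$, treating the shorter factor $M$ via the hybrid large sieve and the longer factor $N$ via a fourth-power moment estimate, the latter resting ultimately on the classical fourth moment of Dirichlet $L$-functions on the critical line. Balancing the exponents over the admissible range $\calX^{1/5}\ll\calM\le\calX^{1/2}$ of the short length is what produces the exponent $\tfrac{11}{20}$; summing the $O(L^{O(1)})$ pieces then yields the stated bound.

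In reconstructing the proof, the main obstacle would be the type II estimate: a purely large-sieve treatment of both factors is too lossy when $\calN$ grows towards $\calX^{4/5}$, and it is the replacement of an $L^2$ input by an $L^4$ input for the longer polynomial---backed by the fourth moment of $L$-functions---that recovers the exponent $\tfrac{11}{20}$ in place of something weaker, with the delicate point being to make the bookkeeping of the H\"older exponents cooperate with the range of $\calM$ delivered by the identity. By contrast the term $\calX$ on the right is entirely routine: it is the contribution of $\tau$ near $0$ for the principal character and small moduli, where the Dirichlet polynomial is of size $\asymp\calX$ on a $\tau$-interval of length $\asymp L^{-1}$.
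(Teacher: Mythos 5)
Your proposal is correct and matches the paper exactly: the paper gives no proof at all, simply stating that Lemma \ref{lemma6.2} is the formulation of \cite[Theorem 1.1]{CK2006} given in \cite[Lemma 2.1]{LW2008}, which is precisely the citation you lead with. Your further sketch of the Heath-Brown decomposition, type I/type II splitting, hybrid large sieve, and fourth-moment input underlying the Choi--Kumchev bound (and the origin of the exponent $\tfrac{11}{20}$) is a reasonable summary of that literature, but it goes beyond anything the paper supplies.
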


We are now equipped to estimate $K_\nu (g)$, thus completing the proof of Lemma \ref{lemma5.2}. Let 
$\nu$ be a sufficiently small positive number. Write
$$\What(\chi,\lam)=\sum_{N_1\le m\le N_2}\left( \Lam(m)\chi(m)-\del_\chi\right) e(m^k\lam),$$
and then put
$$\Ups=\sum_{1\le r\le P}[g,r]^{1+\nu-s/2}\sideset{}{^*}\sum_{\chi\nmod{r}}
\biggl( \int_{-1/(rQ)}^{1/(rQ)}
|W(\chi,\lam)-\What(\chi,\lam)|^2\d\lam \biggr)^{1/2}.$$
Next, observe that when $X$ is large, the condition $|p^j-X|\le Y$ implies that 
$|p-X^{1/j}|\le YX^{-1+1/j}$, and hence for all $\lam\in\dbR$, one has
$$\What(\chi,\lam)-W(\chi,\lam)=\sum_{j\ge 2}\sum_{N_1\le p^j\le N_2}(\log p)\chi(p^j)e(p^{jk}\lam)
\ll YX^{-1/2}.$$
Then on making use of the relation $[g,r](g,r)=gr$, we deduce that
\begin{align*}
\Ups&\ll YX^{-1/2}\sum_{1\le r\le P}[g,r]^{1+\nu-s/2}(r/Q)^{1/2}\\
&\ll YX^{-1/2}g^{1+\nu-s/2}Q^{-1/2}\sum_{1\le r\le P}\left( \frac{r}{(g,r)}
\right)^{1+\nu-s/2}r^{1/2}.
\end{align*}
Consequently, recalling the definition (\ref{2.3}) of $P$ and $Q$, and noting that we are permitted to 
assume that $s\ge 5$, we see that
\begin{align*}
\Ups&\ll YX^{-1/2}g^{1+\nu-s/2}Q^{-1/2}P^{1/2}\sum_{\substack{1\le d\le P\\ d|g}}
\sum_{1\le \rho\le P/d}\rho^{1+\nu-s/2}\\
&\ll YX^{-1/2}g^{1+2\nu-s/2}(X^{k-2}Y^2P^{-1})^{-1/2}P^{1/2+\eps}.
\end{align*}
Thus we conclude that $\Ups\ll g^{1+2\nu-s/2}(YX^{1-k})^{1/2}$, so that in order to establish 
Lemma \ref{lemma5.2}, it suffices to confirm that whenever $\frac{1}{2}\le R\le P$, one has
\begin{equation}\label{6.1}
\sum_{r\sim R}[g,r]^{1+\nu-s/2}\sideset{}{^*}\sum_{\chi\nmod{r}}\biggl( \int_{-1/(rQ)}^{1/(rQ)}
|\What (\chi,\lam)|^2\d\lam \biggr)^{1/2}\ll g^{1+2\nu-s/2}(YX^{1-k})^{1/2}L^c.
\end{equation}

\par We observe next that an application of Gallagher's lemma (see \cite[Lemma 1]{Gal1970}) shows 
that whenever $r\sim R$, then
\begin{align}
\int_{-1/(rQ)}^{1/(rQ)}|\What (\chi,\lam)|^2\d\lam &\ll (RQ)^{-2}\int_{-\infty}^\infty 
\biggl| \sum_{\substack{N_1^k\le m^k\le N_2^k\\ |m^k-v|\le RQ/3}}\left( 
\Lam (m)\chi(m)-\del_\chi\right) \biggr|^2\d v \notag \\
&\ll (RQ)^{-2}\int_{N_1^k-RQ/3}^{N_2^k+RQ/3}\biggl| \sum_{U\le m\le V}\left( 
\Lam(m)\chi(m)-\del_\chi\right) \biggr|^2\d v,\label{6.2}
\end{align}
where we write
$$U=\max\{ N_1,(v-RQ/3)^{1/k}\}\quad \text{and}\quad V=\min \{ N_2,(v+RQ/3)^{1/k}\} .$$

\par We begin by examining the situation with $R=\frac{1}{2}$ and $r=1$ . Here we have
\begin{align*}
\biggl| \sum_{U\le m\le V}\left( \Lam(m)\chi(m)-\del_\chi\right) \biggr|&=
\biggl|\sum_{U\le m\le V}(\Lam(m)-1)\biggr| \\
&\ll \left( (v+Q/3)^{1/k}-(v-Q/3)^{1/k}\right) L\\
&\ll X^{1-k}QL.
\end{align*}
On substituting this conclusion into (\ref{6.2}), we deduce that
$$\int_{-1/Q}^{1/Q}|\What (\chi_0,\lam)|^2\d\lam \ll Q^{-2}(X^{k-1}Y+Q)(X^{1-k}QL)^2,$$
whence
$$g^{1+\nu-s/2}\biggl( \int_{-1/Q}^{1/Q}|\What(\chi_0,\lam)|^2\d\lam \biggr)^{1/2}\ll 
g^{1+\nu-s/2}
(YX^{1-k})^{1/2}L.$$
This confirms (\ref{6.1}) in the case $R=\frac{1}{2}$.\par

Suppose next that $R\ge 1$ and $r\sim R$. In these circumstances, we have $\del_\chi=0$. We apply 
Lemma \ref{lemma6.1} to estimate the integral on the right hand side of (\ref{6.2}), taking $\calX=U$ 
and $\calY=V$, and making use of the notation of the statement of that lemma. We observe in this 
context that
$$(N_2^k+RQ/3)-(N_1^k-RQ/3)\ll X^{k-1}Y.$$
Define $T_i=T_i(\chi,r)$ for $i=1$ and $2$ by putting
$$T_1(\chi,r)=\calT_0^{-1}\underset{|\tau|\le \calT_0}\int |F(\kap+i\tau)|\d\tau\quad \text{and}
\quad T_2(\chi,r)=\underset{\calT_0<|\tau|\le \calT_3}\int \frac{|F(\kap+i\tau)|}{|\tau|}\d\tau .$$
Then we infer from Lemma \ref{lemma6.1} that
$$\biggl( \int_{N_1^k-RQ/3}^{N_2^k+RQ/3}\biggl| \sum_{U\le m\le V}\Lam(m)\chi(m)\biggr|^2\d v
\biggr)^{1/2}\ll (X^{k-1}Y)^{1/2}(T_1+T_2+1).$$
By substituting this conclusion first into (\ref{6.2}), and then into (\ref{6.1}), we conclude thus far that
\begin{equation}\label{6.3}
\sum_{r\sim R}[g,r]^{1+\nu-s/2}\sideset{}{^*}\sum_{\chi\nmod{r}}\biggl( \int_{-1/(rQ)}^{1/(rQ)}
|\What (\chi,\lam)|^2\d\lam \biggr)^{1/2}\ll \frac{(X^{k-1}Y)^{1/2}}{QR}\sum_{i=1}^3\grT_i,
\end{equation}
where
\begin{align*}
\grT_1&=\calT_0^{-1}\sum_{r\sim R}[g,r]^{1+\nu-s/2}\sideset{}{^*}\sum_{\chi \nmod{r}}
\underset{|\tau|\le \calT_0}\int |F(\kap+i\tau)|\d\tau ,\\
\grT_2&=\sum_{r\sim R}[g,r]^{1+\nu-s/2}\sideset{}{^*}\sum_{\chi \nmod{r}}
\underset{\calT_0<|\tau|\le \calT_3}\int \frac{|F(\kap+i\tau)|}{|\tau|}
\d\tau \\ 
\grT_3&=\sum_{r\sim R}[g,r]^{1+\nu-s/2}\sideset{}{^*}\sum_{\chi \nmod{r}}1.
\end{align*}

\par We estimate the terms $\grT_i$ in turn. Observe first that 
\begin{equation}\label{6.4}
\calT_0^{-1}=\log (\calY/\calX)\ll RQv^{-1}\ll RQX^{-k}.
\end{equation}
Recall once again our assumption that $s\ge 5$. Then an application of Lemma \ref{lemma6.2} yields the 
bound
\begin{align*}
\grT_1&\ll g^{1+\nu-s/2}\calT_0^{-1}\sum_{\substack{1\le l\le 2R\\ l|g}}(R/l)^{1+\nu-s/2}
(l^{-1}R^2\calT_0X^{11/20}+X)L^c\\
&\ll g^{1+\nu-s/2+\eps}L^c\left( RX^{11/20}+RQX^{1-k}\right). 
\end{align*}
On recalling the definition (\ref{2.3}) of $P$ and $Q$, and noting that by hypothesis, one has 
$Y>X^{19/24}>PX^{31/40}$, we therefore deduce that
\begin{align}
\grT_1&\ll g^{1+2\nu-s/2}RQX^{1-k}L^c(1+PX^{31/20}Y^{-2})\notag \\
&\ll g^{1+2\nu-s/2}RQX^{1-k}L^c.\label{6.5}
\end{align}

\par In order to estimate $\grT_2$, we introduce the auxiliary function
$$M(l,R,\Tet,X)=\sum_{\substack{r\sim R\\ l|r}}\,\sideset{}{^*}\sum_{\chi\nmod{r}}
\int_{\Tet}^{2\Tet}|F(\kap+i\tau,\chi)\d\tau .$$
Then by dividing the range of integration into dyadic intervals, one finds in a similar manner to the 
treatment of $\grT_1$ that
\begin{align*}
\grT_2&\ll g^{1+\nu-s/2}L^c\sum_{\substack{1\le l\le 2R\\ l|g}}(R/l)^{1+\nu-s/2}
\max_{\calT_0\le \Tet\le \calT_3}\Tet^{-1}M(l,R,\Tet,X)\\
&\ll g^{1+\nu-s/2}\sum_{\substack{1\le l\le 2R\\ l|g}}(R/l)^{1+\nu-s/2}(l^{-1}R^2X^{11/20}+
\calT_0^{-1}X)L^{2c},
\end{align*}
whence, by reference to (\ref{6.4}), one obtains the bound
\begin{equation}\label{6.6}
\grT_2\ll g^{1+2\nu-s/2}RQX^{1-k}L^c.
\end{equation}

\par Finally, one has
\begin{align}
\grT_3&\ll \sum_{r\sim R}[g,r]^{1+\nu-s/2}r\ll g^{1+\nu-s/2}\sum_{\substack{1\le l\le 2R\\ l|g}}
(R/l)^{1+\nu-s/2}R\notag \\
&\ll g^{1+2\nu-s/2}RQX^{1-k}(PX/Y^2)\ll g^{1+2\nu-s/2}RQX^{1-k}.\label{6.7}
\end{align}

\par By substituting the estimates (\ref{6.5}), (\ref{6.6}) and (\ref{6.7}) into (\ref{6.3}), we 
conclude that the estimate (\ref{6.1}) does indeed hold, and thus the conclusion of 
Lemma \ref{lemma5.2} is finally confirmed.

\section{The estimation of $J_\nu(g)$} The argument of the proof of Lemma \ref{lemma5.2} adapts with 
only modest complications to establish Lemma \ref{lemma5.3}, though making use of the first estimate of 
Lemma \ref{lemma6.1} in place of the second, and we suppress details of the necessary adjustments in 
the interests of concision. We refer the reader to \cite[\S6]{LX2007} and \cite[\S6]{ST2009} for the 
necessary details. One issue deserves additional attention, this being associated with the proof of the 
second estimate $J_\nu(1)\ll YL^{-B}$ of Lemma \ref{lemma5.3}. When $g=1$, we divide the 
summation over $r$ in the definition of $J_\nu(1)$ into dyadic intervals $r\sim R$, and then consider 
seperately the situations with $R\le L^B$ and $L^B<R\le P$, in which $B$ is a positive number depending 
only on $A$. Since $g=1$, in the latter situation we may extract a factor $R^{-1/4}$ from the estimates 
involving summations over $r$ without damaging convergence, and hence replace the bound 
$g^{1+2\nu-s/2}YL^c$ by $g^{1+2\nu-s/2}YL^cR^{-1/4}\ll YL^{-A}$ in the estimates that result. 
Thus it suffices to restrict attention to the situation with $1\le R\le L^B$.\par

When $1\le u\le n$, $r\le 2L^B$ and $\chi$ is a Dirichlet character modulo $r$, we may make use of the 
explicit formula 
\begin{equation}\label{7.1}
\sum_{1\le m\le u}\Lam (m)\chi(m)=\del_\chi u-\sum_{|\gam|\le T}\frac{u^\rho}{\rho}+O\left( 
\left( \frac{u}{T}+1\right) \log^2(ruT)\right) ,
\end{equation}
where we denote by $\rho=\bet+i\gam$ a typical non-trivial zero of the Dirichlet $L$-function $L(s,\chi)$, 
and $T$ is a parameter with $2\le T\le u$. Taking $T=X^{5/12-\nu}$ and substituting (\ref{7.1}) into 
the formula for $\What(\chi,\lam)$, we deduce that whenever $|\lam|\le (rQ)^{-1}$, one has
\begin{align*}
\What(\chi,\lam)&=\int_{N_1}^{N_2}e(u^k\lam)\sum_{|\gam|\le T}u^{\rho-1}\d u+
O\left( (1+X^{k-1}Y|\lam|)XT^{-1}L^2\right) \\
&\ll Y\sum_{|\gam|\le T}X^{\bet-1}+O(X^kYQ^{-1}T^{-1}L^2).
\end{align*}
Recalling (\ref{2.3}) once again, and taking $\nu>0$  and $\del>0$ sufficiently small in terms of 
$\tet$, one finds that
$$X^kQ^{-1}T^{-1}\ll X^2Y^{-2}PT^{-1}\ll X^{\nu+2K\del-2(\tet-19/24)}\ll X^{-2\nu/3},$$
whence
$$\What(\chi,\lam)\ll Y\sum_{|\gam|\le T}X^{\bet -1}+O(X^{-\nu/2}Y).$$

\par Now put $\eta(T)=c(\log T)^{-4/5}$, with $c$ a suitably small positive number. Then by Page's 
Lemma (see \cite[Corollary 11.10]{MV2007}), one sees that $\underset{\chi\nmod{r}}\prod L(s,\chi)$ has 
no zeros in the region $\sig\ge 1-\eta(T)$ and $|t|\le T$, except potentially for a single Siegel zero. But 
as a consequence of Siegel's theorem (see \cite[Corollary 11.15]{MV2007}), no Siegel zero plays any role 
when $r\sim R\le L^B$. Thus, on making use of the zero-density estimates for Dirichlet $L$-functions 
of large sieve type given in \cite[equation (1.1)]{Hux1974}, we deduce that
\begin{align*}
\sum_{r\sim R}\ \sideset{}{^*}\sum_{\chi\nmod{r}}\sum_{|\gam|\le T}X^{\bet-1}&\ll 
L^c\int_0^{1-\eta(T)}T^{12(1-\alp)/5}X^{\alp-1}\d\alp ,\\
&\ll L^c\int_0^{1-\eta(T)}X^{12(\alp-1)\nu/5}\d\alp ,
\end{align*}
whence
$$\sum_{r\sim R}\ \sideset{}{^*}\sum_{\chi\nmod{r}}\sum_{|\gam|\le T}X^{\bet-1}\ll L^c 
X^{-12\eta(T)\nu/5}\ll \exp(-c' L^{1/5}),$$
for a suitable positive number $c'$. Assembling these estimates together, we conclude that
$$\sum_{r\le P}\ \sideset{}{^*}\sum_{\chi\nmod{r}}\max_{|\lam|\le 1/(rQ)}|\What (\chi,\lam)|\ll 
YL^{-A},$$
where $A>0$ is arbitrary. This completes the proof of the second estimate of Lemma \ref{lemma5.3}.

\section{Exceptional sets} The basic exceptional set conclusions embodied in Theorem \ref{theorem1.2} 
follow by the standard arguments employing Bessel's inequality. Let $\calZ$ denote the set of integers $n$ 
with $N\le n\le N+X^{k-1}Y$ and $n\equiv s\mmod{R}$ (and, in case $k=3$ and $s=7$, satisfying also 
$9\nmid n$), for which the equation
\begin{equation}\label{8.1}
n=p_1^k+p_2^k+\ldots +p_s^k
\end{equation}
has no solution in prime numbers $p_j$ with $|p_j-X|<Y$ $(1\le j\le s)$. In addition, put $Z=\text{card}
(\calZ)$. Then we find from Bessel's inequality and (\ref{2.4}) that
$$\sum_{n\in \calZ}|\rho_s(n;\grm)|^2\le \sum_{N\le n\le N+X^{k-1}Y}|\rho_s(n;\grm)|^2
\le \int_\grm |f(\alp)|^{2s}\d\alp .$$
We therefore conclude from Propositions \ref{prop2.2} and \ref{prop2.3}, together with (\ref{2.9}), that 
whenever $s>t_k$, then
\begin{align}
\sum_{n\in \calZ}|\rho_s(n;\grm)|^2&\le \Bigl( \sup_{\alp\in \grm}|f(\alp)|\Bigr)^{2s-2t_k}
\int_\grm |f(\alp)|^{2t_k}\d\alp \notag \\
&\ll (X^{\eps-2\del}Y)^{2s-2t_k}Y^{2t_k-1}X^{1-k+\del}\ll Y^{2s-1}X^{1-k-\del}.\label{8.2}
\end{align}
Meanwhile, it follows from Proposition \ref{prop2.1} and the lower bound in (\ref{2.6}) that whenever 
$s\ge \min\{5,k+2\}$, then
\begin{equation}\label{8.3}
\sum_{n\in \calZ}|\rho_s(n;\grM)|^2\gg Z(Y^{s-1}X^{1-k})^2.
\end{equation}
Since for $n\in \calZ$, one necessarily has
$$\rho_s(n;\grM)+\rho_s(n;\grm)=\rho_s(n)=0,$$
it follows from (\ref{8.2}) and (\ref{8.3}) that
$$Z(Y^{s-1}X^{1-k})^2\ll Y^{2s-1}X^{1-k-\del},$$
whence $Z\ll X^{k-1-\del}Y$.\par

Since the number of integers $n$ satisfying $N\le n\le N+X^{k-1}Y$, together with other associated 
conditions described in the opening paragraph, is of order $X^{k-1}Y$, it follows that for almost all of 
these integers one has $\rho_s(n)\ne 0$. We observe also that when $|p_j-X|<Y$ and 
$N\le n\le N+X^{k-1}Y$, one has also $|p_j-(n/s)^{1/k}|\ll Y$, and thus the conclusion of Theorem 
\ref{theorem1.2} is confirmed. This completes our proof of Theorem \ref{theorem1.2}.\par

It may be worth observing at this point that, since one has $|p_j-X|<Y$ in the representation (\ref{8.1}), 
then necessarily one has $|n-sX^k|\ll X^{k-1}Y$, or equivalently $|n-N|\ll X^{k-1}Y$. We therefore see 
that, in order for an exceptional set estimate to constitute a non-trivial assertion to the effect that almost all 
eligibible integers are represented in the form (\ref{8.1}), one must establish an estimate of the shape 
$Z=o(X^{k-1}Y)$. Assertions in \cite{LW2008} and \cite {Zha2010} are only slightly stronger than 
$Z=o(X^k)$, and consequently yield non-trivial exceptional sets only when $Y$ is very nearly as large as 
$X$. This explains the origin of the infelicities noted in the discussion following the statement of Theorem 
\ref{theorem1.2} above.\par

We finish this section by noting that the ideas underlying the proof of Proposition \ref{prop2.2} may be 
used to good effect in sharpening estimates for exceptional sets. In order to illustrate such ideas, we begin 
by recording a lemma of use in estimating exceptional sets for sums of squares of almost equal primes. 
Here, we adopt the notation of \S\S2 and 3.

\begin{lemma}\label{lemma8.1} Let $\calZ\subseteq [N,N+X^{k-1}Y]\cap \dbZ$, and put
$$K(\alp)=\sum_{n\in \calZ}e(n\alp).$$
Then one has
$$\int_0^1|f(\alp)^4K(\alp)^2|\d\alp \ll X^\eps (Y^2Z+Y^3X^{-1}Z^2).$$
\end{lemma}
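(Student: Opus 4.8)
The plan is to expand the integral by orthogonality into shifted fourth moments of the Weyl sum $F$, and to bound each such moment by an elementary argument exploiting the factorisation $m^2-\widetilde m^2=(m-\widetilde m)(m+\widetilde m)$. Since each weight $\log p_i$ implicit in $|f(\alp)^4K(\alp)^2|$ is $O(X^\eps)$, and the underlying Diophantine count is only enlarged by discarding the primality conditions, one has $\int_0^1|f(\alp)^4K(\alp)^2|\d\alp\ll X^\eps\int_0^1|F(\alp)|^4|K(\alp)|^2\d\alp$, with $F$ given by (\ref{3.3}) (taken with $k=2$). Expanding $|K(\alp)|^2=\sum_{n\in\calZ}\sum_{n'\in\calZ}e((n-n')\alp)$, orthogonality yields
$$\int_0^1|F(\alp)|^4|K(\alp)|^2\d\alp=\sum_{n\in\calZ}\sum_{n'\in\calZ}M(n-n'),$$
where $M(h)=\int_0^1|F(\alp)|^4e(h\alp)\d\alp$ is the non-negative number of solutions of $m_1^2+m_2^2-m_3^2-m_4^2=h$ in integers $m_i$ with $|m_i-X|\le Y$. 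The number of pairs $(n,n')\in\calZ^2$ with $n=n'$ being $Z$, and the total number being $Z^2$, the lemma will follow once we show that
$$M(0)\ll X^\eps Y^2\qquad\text{and}\qquad M(h)\ll X^\eps(Y+Y^3X^{-1})\quad(h\ne0);$$
for these give $\sum_{n,n'\in\calZ}M(n-n')\ll X^\eps(Y^2Z+YZ^2+Y^3X^{-1}Z^2)$, and the term $YZ^2$ is absorbed because in the setting of \S\S2--3 one has $Y=X^\tet$ with $\tet>\tfrac12$, whence $Y\ge X^{1/2}$.

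For the estimate $M(0)\ll X^\eps Y^2$, write $s(u)$ for the number of pairs $(m,\widetilde m)$ with $m^2-\widetilde m^2=u$ and $|m-X|,|\widetilde m-X|\le Y$, so that $M(0)=\sum_us(u)^2$. Here $\sum_us(u)\ll Y^2$ and $s(0)\ll Y$, while for $u\ne0$ any such pair corresponds to a factorisation $u=(m-\widetilde m)(m+\widetilde m)$, so that $s(u)\ll X^\eps$. Consequently $M(0)=s(0)^2+\sum_{u\ne0}s(u)^2\ll Y^2+X^\eps\sum_{u\ne0}s(u)\ll X^\eps Y^2$.

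The substantive point is the bound for $M(h)$ with $h\ne0$. Here I would parametrise a solution by writing $m_1=m_3+a$ and $m_2=m_4+b$ with $a,b\in[-2Y,2Y]\cap\dbZ$, so that the equation becomes the linear relation $am_3+bm_4=\tfrac12(h-a^2-b^2)$, soluble only when $\gcd(a,b)\mid\tfrac12(h-a^2-b^2)$. When one of $a$, $b$, $a+b$ vanishes this degenerates into a divisibility condition admitting $O(X^\eps)$ values of the constrained variable together with $O(Y)$ choices of a free one, contributing $O(X^\eps Y)$ in all. When $a$, $b$ and $a+b$ are all non-zero, evaluating $am_3+bm_4$ at $m_3=m_4=X$ and using $|m_i-X|\le Y$ shows that solutions exist only if $|(a+b)-h/(2X)|\ll Y^2/X$, so that $a+b$ is confined to an interval $\calJ$ of length $O(Y^2/X)$; moreover, for each admissible pair $(a,b)$ the number of $(m_3,m_4)$ in the relevant box on the line $am_3+bm_4=\tfrac12(h-a^2-b^2)$ is $\ll1+Y\gcd(a,b)/\max(|a|,|b|)$. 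Summing the contribution $1$ over the $\ll Y(1+Y^2/X)$ admissible pairs gives $O(Y+Y^3X^{-1})$; for the remaining contribution, write $b=s-a$ with $s\in\calJ$, note $\gcd(a,s-a)=\gcd(a,s)$, and partition the $a$-sum according to $d=\gcd(a,s)$ (a divisor of $s$): since then $d\mid a$ and $\gcd(a,s-a)/\max(|a|,|s-a|)\le|a/d|^{-1}$, summing over those $a$ gives $\ll\log Y$, and summing over $d\mid s$ yields $\sum_a\gcd(a,s-a)/\max(|a|,|s-a|)\ll X^\eps$ for each $s\ne0$, so the remaining contribution is $\ll X^\eps Y\cdot\#(\calJ\cap\dbZ)\ll X^\eps(Y+Y^3X^{-1})$. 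This establishes $M(h)\ll X^\eps(Y+Y^3X^{-1})$ for $h\ne0$, and hence the lemma. The main obstacle is precisely this bound: the trivial estimate $M(h)\ll X^\eps Y^2$, valid for all $h$, loses a power of $X$ relative to what is needed, and recovering the sharper estimate for $h\ne0$ rests both on the localisation of $a+b$ to the short interval $\calJ$ and on a careful treatment of the gcd-weighted sum so as not to lose a factor $Y$ from the near-diagonal terms where $|a|\asymp|b|$.
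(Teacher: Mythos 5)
Your proof is correct, and while it shares with the paper's the two key observations — that the diagonal $n_1=n_2$ contributes $O(X^\eps Y^2 Z)$ by Hua's lemma, and that for off-diagonal pairs the quantity $m_1+m_2-m_3-m_4$ is confined to a window of length $O(Y^2/X)$ — the final count is arranged genuinely differently. You decouple the integral into $\sum_{n,n'\in\calZ}M(n-n')$ and prove a uniform bound $M(h)\ll X^\eps(Y+Y^3X^{-1})$ for $h\ne 0$ via the substitution $a=m_1-m_3$, $b=m_2-m_4$; after localising $a+b$ to the short interval $\calJ$, this leaves a lattice-point-on-a-line count with a gcd weight, which you handle by the $d\mid s$ partition so as not to lose a factor from the near-diagonal pairs $|a|\asymp|b|$. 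The paper instead fixes $n_1,n_2\in\calZ$ directly, introduces $h=y_1+y_2-y_3-y_4$, and, after also fixing $y_3$, notes the identity $2(m_4-m_1)(m_4-m_2)=h(2X+2y_3+h)-(n_1-n_2)$: when the right-hand side is non-zero, a single divisor-function bound determines $m_1,m_2,m_4$ up to $O(X^\eps)$ choices, and the degenerate case in which it vanishes is absorbed separately. The paper's route is slicker at the last step, since it closes with a bare divisor estimate and no gcd summation; your route pays for the decoupling with extra bookkeeping in the gcd sum, but delivers a cleanly stated uniform estimate on $M(h)$ that is of some independent interest. Note finally that your absorption of the $YZ^2$ term into $Y^3X^{-1}Z^2$ uses $Y\ge X^{1/2}$, a hypothesis the paper also relies on (when it writes $T_3\ll X^\eps YZ^2\ll X^\eps(Y^3/X)Z^2$) even though neither proof flags it explicitly in the lemma statement.
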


\begin{proof} By adjusting the value of $Y$ by an amount at most $O(1)$, we may suppose that $X$ is 
an integer, and consequently we are free to suppose the latter in the proof of this lemma. Next, by 
orthogonality, the mean value in question is bounded above by the number of integral solutions of the 
equation
\begin{equation}\label{8.4}
(X+y_1)^2+(X+y_2)^2-(X+y_3)^2-(X+y_4)^2=n_1-n_2,
\end{equation}
with $|y_i|\le Y$ $(1\le i\le 4)$ and $n_1,n_2\in \calZ$, and with each solution counted with weight
$$\prod_{i=1}^4 \log (X+y_i).$$
Let $T_1$ denote the number of such solutions in which $n_1=n_2$, and let $T_2$ denote the 
corresponding number of solutions with $n_1\ne n_2$. Then we have
\begin{equation}\label{8.5}
\int_0^1|f(\alp)^4K(\alp)^2|\d\alp \ll X^\eps (T_1+T_2).
\end{equation}

\par By orthogonality and an application of Hua's lemma, we find from (\ref{8.4}) that
\begin{equation}\label{8.6}
T_1\le \text{card}(\calZ)\int_0^1|f(\alp)|^4\d\alp \ll ZY^{2+\eps}.
\end{equation}
Consider then a solution $\bfy,\bfn$ counted by $T_2$. Write $m=m(n_1,n_2)$ for the integer closest to 
$(n_1-n_2)/(2X)$. Then since it follows from (\ref{8.4}) that
$$2X(y_1+y_2-y_3-y_4)+(y_1^2+y_2^2-y_3^2-y_4^2)=n_1-n_2,$$
we find that
$$|y_1+y_2-y_3-y_4-m|\le 1+Y^2/X\le 2Y^2/X.$$
Thus we discern that for each fixed choice of $n_1$ and $n_2$ with $n_1,n_2\in \calZ$ and 
$n_1\ne n_2$, there exists an integer $h$ with $|h-m|\le 2Y^2/X$ and
\begin{equation}\label{8.7}
\left. \begin{aligned}
y_1^2+y_2^2-y_3^2-y_4^2&=n_1-n_2-2Xh\\
y_1+y_2-y_3-y_4&=h
\end{aligned} \right\} .
\end{equation}
We divide the solutions $\bfy,\bfn$ counted by $T_2$ into two types. Let $T_3$ denote the number of 
the solutions counted by $T_2$ in which
\begin{equation}\label{8.8}
n_1-n_2=h(2X+2y_3+h),
\end{equation}
and let $T_4$ denote the corresponding number of solutions in which the latter equation does not hold.
\par

Given a fixed choice of $n_1,n_2\in \calZ$ with $n_1\ne n_2$, a divisor function estimate shows that the 
number of possible choices for $h$ and $y_3$ satisfying (\ref{8.8}) is $O(X^\eps)$. Fix any one such 
choice, and eliminate $y_4$ between the equations (\ref{8.7}). We deduce that
$$(y_1+y_2-y_3-h)^2=(y_1^2+y_2^2-y_3^2)-(n_1-n_2-2Xh),$$
whence
\begin{equation}\label{8.9}
2(y_1-y_3-h)(y_2-y_3-h)=h(2X+2y_3+h)-(n_1-n_2).
\end{equation}
Thus, for solutions counted by $T_3$ in which the right hand side vanishes, one has $y_1=y_3+h$ or 
$y_2=y_3+h$. In the former case, the value of $y_1$ is fixed, and it follows from the linear equation of 
(\ref{8.7}) that $y_2=y_4$. A symmetrical argument yields a symmetrical conclusion in the latter case, 
and thus we deduce that
\begin{equation}\label{8.10}
T_3\ll X^\eps YZ^2\ll X^\eps (Y^3/X)Z^2.
\end{equation}
Meanwhile, for solutions counted by $T_4$ in which the right hand side of (\ref{8.9}) is non-zero, we find 
by a divisor estimate that for each of the $O(Y^2/X)$ possible choices for $h$, and each of the $O(Y)$ 
possible choices for $y_3$, there are $O(X^\eps)$ possible choices for $y_1-y_3-h$ and $y_2-y_3-h$. 
Given any fixed such choices, we find that $y_1$ and $y_2$ are fixed, and then $y_4$ is also fixed by 
virtue of the linear equation in (\ref{8.7}). Thus we conclude that
\begin{equation}\label{8.11}
T_4\ll X^\eps Y(Y^2/X)Z^2.
\end{equation}
By combining (\ref{8.6}), (\ref{8.10}) and (\ref{8.11}), and substituting into (\ref{8.5}), the conclusion of 
the lemma now follows.
\end{proof}

Equipped with this lemma, we may establish a powerful estimate for the exceptional set associated with six 
almost equal squares of prime numbers. When $Y\ge 1$, denote by $E_6(N;Y)$ the number of positive 
integers $n$, with $n\equiv 6\mmod{24}$ and $|n-N|\le XY$, for which the equation 
$n=p_1^2+p_2^2+\ldots +p_6^2$ has no solution in prime numbers $p_j$ with 
$|p_j-(n/6)^{1/2}|<Y$ $(1\le j\le 6)$.

\begin{theorem}\label{theorem8.2}
Suppose that $Y\ge X^{19/24+\eps}$, for some positive number $\eps$. Then there is a positive number 
$\del$ for which $E_6(N;Y)\ll Y^{-1}X^{1-\del}$.
\end{theorem}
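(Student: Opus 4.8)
The plan is to run the circle method with the Hardy--Littlewood dissection of \S2, taking $k=2$ and $s=6$ (so $K=36$ and $X=(N/6)^{1/2}$), writing $Y=X^\tet$, and choosing $\del>0$ fixed and sufficiently small as there. Since for $Y$ exceeding $X^{1-c}$ (with $c>0$ fixed and small) the conclusion, in fact with no exceptions at all, is contained in the admissible exponent of Sun and Tang \cite{ST2009} for six squares of primes, we may assume $\tfrac{19}{24}+\eps\le\tet\le1-c$, and then $\del$ may be fixed uniformly. The one new idea, replacing the crude appeal to Bessel's inequality in \S9, is to feed the exceptional set $\calZ$ back into the short interval bound of Lemma~\ref{lemma8.1}. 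First I would extract a major arc lower bound: by Proposition~\ref{prop2.1} together with the lower bound in (\ref{2.6}), each $n\in\calZ$ satisfies $\rho_6(n;\grM)=\grS(n)\grJ(n)+O(Y^5X^{-1}L^{-1})$ with $\grS(n)\grJ(n)$ a positive real of size $\gg Y^5X^{-1}$; since $\rho_6(n)=0$ for $n\in\calZ$, the identity (\ref{2.5}) gives $\rho_6(n;\grm)=-\rho_6(n;\grM)$, whence $-\Re\rho_6(n;\grm)\gg Y^5X^{-1}$. Writing $K(\alp)=\sum_{n\in\calZ}e(n\alp)$ and letting $Z$ denote the cardinality of $\calZ$, and summing over $\calZ$, I would then deduce that
$$ZY^5X^{-1}\ll-\Re\int_\grm f(\alp)^6\overline{K(\alp)}\,\d\alp\le\biggl|\int_\grm f(\alp)^6\overline{K(\alp)}\,\d\alp\biggr|.$$

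Next I would split $f^6=f^4\cdot f^2$ and apply Cauchy--Schwarz, so that the right-hand side is at most $\bigl(\int_\grm|f|^8\,\d\alp\bigr)^{1/2}\bigl(\int_\grm|f|^4|K|^2\,\d\alp\bigr)^{1/2}$. The first factor I would bound on the minor arcs alone, combining the Weyl-type estimate (\ref{2.9}) for $k=2$ (valid given our choice of $\tet$ and $\del$) with Proposition~\ref{prop2.2}:
$$\int_\grm|f(\alp)|^8\,\d\alp\le\Bigl(\sup_{\alp\in\grm}|f(\alp)|\Bigr)^2\int_0^1|f(\alp)|^6\,\d\alp\ll(X^{\eps-2\del}Y)^2\cdot Y^5X^{-1+\eps}\ll X^{\eps-4\del}Y^7X^{-1}.$$
For the second factor, I would note that $|f|^4|K|^2=|f^2K|^2$, extend the range of integration to $[0,1)$, and apply Lemma~\ref{lemma8.1}, obtaining $\int_\grm|f|^4|K|^2\,\d\alp\le\int_0^1|f(\alp)^4K(\alp)^2|\,\d\alp\ll X^\eps(Y^2Z+Y^3X^{-1}Z^2)$.

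Putting the pieces together yields $ZY^5X^{-1}\ll X^{\eps-2\del}Y^{7/2}X^{-1/2}(Y^2Z+Y^3X^{-1}Z^2)^{1/2}$; squaring and dividing by $Z$ then gives $ZY^{10}X^{-2}\ll X^{2\eps-4\del}Y^9X^{-1}+X^{2\eps-4\del}Y^{10}X^{-2}Z$. The final term on the right is $X^{2\eps-4\del}$ times the left-hand side, so on taking the implicit $\eps$ in these estimates small in terms of $\del$ it may be absorbed once $N$ is large, leaving $Z\ll X^{1+2\eps-4\del}Y^{-1}\ll Y^{-1}X^{1-\del}$, which is the assertion of the theorem. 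The step I expect to require the most delicacy, and the reason the argument succeeds at all, is the treatment of the off-diagonal term $Y^3X^{-1}Z^2$ coming from Lemma~\ref{lemma8.1}: after the manipulations above it contributes a quantity of exactly the order $Y^{10}X^{-2}Z$ of the squared major arc main term, so the inequality can be closed only because the eighth moment of $f$ is taken over $\grm$ rather than over $[0,1)$, recovering the decisive power $X^{-4\del}$ from the Weyl bound (\ref{2.9}). Any coarser bookkeeping---replacing $\int_\grm|f|^8$ by $\int_0^1|f|^8$, or attempting the same device through Bessel's inequality applied to $\sum_{n\in\calZ}|\rho_6(n;\grm)|^2$---loses this margin and recovers nothing stronger than the estimate $Z\ll X^{1-\del}Y$ that is already implicit in \S9.
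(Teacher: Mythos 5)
Your proposal follows exactly the same route as the paper's own proof: major arc lower bound from Proposition~\ref{prop2.1} plus the positivity of $\grS(n)\grJ(n)$, the splitting $f^6\overline{K}=f^2K\cdot f^4$ (in the guise $f^4=f^4$, $f^4K^2$) treated by Schwarz's inequality, the minor-arc eighth moment controlled by (\ref{2.9}) and Proposition~\ref{prop2.2}, and the remaining factor by Lemma~\ref{lemma8.1}, followed by the same absorption of the off-diagonal term. The one thing you add that the paper leaves tacit is the observation that for $\tet$ near $1$ one may fall back on the Sun--Tang admissible exponent to keep $\del$ uniform on the compact range $\tfrac{19}{24}+\eps\le\tet\le 1-c$, which is a genuine (if minor) clarification; otherwise the two arguments coincide.
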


\begin{proof} Let $\calZ$ denote the set of integers counted by $E_6(N;Y)$. Then for each $n\in \calZ$ 
one has
$$\int_\grm f(\alp)^6e(-n\alp)\d\alp =-\int_\grM f(\alp)^6e(-n\alp)\d\alp ,$$
whence Proposition \ref{prop2.1} and the associated discussion yields the lower bound
$$\biggl| \int_\grm f(\alp)^6K(-\alp)\d\alp \biggr| \gg \sum_{n\in \calZ} Y^5X^{-1}=ZY^5X^{-1}.$$
By Schwarz's inequality, we therefore deduce that
$$ZY^5X^{-1}\ll \biggl( \int_0^1 |f(\alp)^4K(\alp)^2|\d\alp \biggr)^{1/2} 
\biggl( \int_\grm |f(\alp)|^8\d\alp \biggr)^{1/2}.$$
An application of Proposition \ref{prop2.2} together with (\ref{2.9}) shows that
\begin{align*}
\int_\grm |f(\alp)|^8\d\alp &\le \Bigl( \sup_{\alp\in \grm}|f(\alp)|\Bigr)^2\int_0^1|f(\alp)|^6\d\alp \\
&\ll (YX^{-2\del})^2Y^5X^{\eps-1},
\end{align*}
and so we deduce from Lemma \ref{lemma8.1} that
\begin{align*}
ZY^5X^{-1}&\ll X^\eps (Y^2Z+Y^3X^{-1}Z^2)^{1/2}(Y^7X^{-1-2\del})^{1/2}\\
&\ll Y^{9/2}X^{-(1+\del)/2}Z^{1/2}+Y^5X^{-1-\del/2}Z.
\end{align*}
Consequently, one sees that $Z\ll X^{1-\del}Y^{-1}$, and the proof of the theorem is complete.
\end{proof}

\bibliographystyle{amsbracket}
\providecommand{\bysame}{\leavevmode\hbox to3em{\hrulefill}\thinspace}

\end{document}